\newcommand{\C}{\mathbb{C}}
\newcommand{\R}{\mathbb{R}}
\newcommand{\N}{\mathbb{N}}
\newcommand{\p}{\mathbb{P}}
\newcommand{\fol}{\mathcal{F}}
\newcommand{\RR}{\mathbb{R}}
\newcommand{\CC}{\mathbb{C}}
\newcommand{\PP}{\mathbb{P}}
\newcommand{\OO}{\mathrm{O}}
\newcommand{\Aut}{\mathrm{Aut}}
\newcommand{\Diag}{\mathrm{Diag}}
\newcommand{\Span}{\mathrm{Span}}
\newcommand{\SLR}{{\rm SL}(2,\mathbb{R})}
\newcommand{\SLC}{{\rm SL}(2,\mathbb{C})}
\newcommand{\slR}{\mathfrak{sl}({2},\mathbb{R})}
\newcommand{\rb}{\right)}
\newcommand{\lb}{\left(}
\newcommand{\dd}[1]{\frac{\partial}{\partial #1}}
\newcommand{\D}[1]{\frac{d}{d #1}}
\newcommand{\fg}{\mathfrak{g}}
\newcommand{\ad}{\mathrm{ad}}
\newcommand{\Ad}{\mathrm{Ad}}
\newcommand{\Tr}{\mathrm{Tr}}
\def\picill#1by#2(#3)#4
\vfill\special{illustration #3 scaled #4}}}
\newtheorem{teo}{Theorem}[section]
\newtheorem{proposition}[teo]{Proposition}
\newtheorem{lema}[teo]{Lemma}
\newtheorem{lemma}[teo]{Lemma}
\newtheorem{obs}[teo]{Remark}
\newtheorem{definition}[teo]{Definition}
\newtheorem{coro}[teo]{Corollary}
\date{\today}
\begin{document}

\title[Geodesic completeness of metrics on Lie groups]{Geodesic completeness of pseudo and holomorphic Riemannian metrics on Lie groups}

\author{Ahmed Elshafei, \, \, \, Ana Cristina Ferreira \, \, \, \& \, \, \, Helena Reis}
\address{}
\thanks{}

\subjclass[2020]{Primary 53C22; Secondary 53C30, 53C50, 53C56, 37F75}
\keywords{geodesic (semi)completeness,  Euler-Arnold equations, holomorphic metric}

\begin{abstract}
This paper is devoted to geodesic completeness of left-invariant metrics for real and complex Lie groups. We start by establishing the Euler-Arnold formalism in the holomorphic setting. We study the real Lie group $\SLR$ and reobtain the known characterization of geodesic completeness and, in addition, present a detailed study where we investigate the maximum domain of definition of every single geodesic for every possible metric. We investigate completeness and semicompleteness of the complex geodesic flow for left-invariant holomorphic metrics and, in particular, establish a full classification for the Lie group $\SLC$.
\end{abstract}

\maketitle

\section{Introduction}

The study of left-invariant pseudo-Riemannian metrics on Lie groups and their quotients is an important topic in differential geometry with applications in cosmology,  and general relativity, for the case of Lorentzian signature. Even though the formalism regarding pseudo-Riemanannian geometry is very similar to the Riemannian one, a fundamental issue in this respect is that pseudo-Riemannian metrics
often fail to be geodesically complete, even in the compact case.   In mathematical physics, incompleteness might initially have been regarded as a failure of the model, however,  singularities have become so common (consider, for instance, the classical Schwarzschild spacetime or the Penrose-Hawkings singularity theorems) that it is reasonable to expect incompleteness under general physical assumptions.  In fact, the completeness or incompleteness of a metric becomes a fundamental property of the model (cf. \cite{Sanchez} for a survey on geodesic completeness in the context of mechanical systems).

The notion of geodesic flow on Lie groups can be traced back to Euler's work on the motion of  rigid bodies in $\RR^3$ with a fixed
point. Euler showed that these motions can be described as geodesics on the special orthogonal group ${\rm SO}(3)$. Two centuries later, Arnold showed, in
his seminal paper \cite{Arnold-paper}, that the motion of a rigid body with a fixed point can indeed be described as the geodesics of
a left-invariant pseudo-Riemannian metric on a Lie group. Another important result to be noted is that, only a few years later, Marsden \cite{Marsden} proved that compact pseudo-Riemannian Lie groups (and more generally, homogeneous spaces) are geodesically complete.

The technique introduced by Arnold in \cite{Arnold-paper} lends itself well to investigate the completeness of left-invariant pseudo-Riemannian
metrics. More precisely, Arnold established that these geodesics are in one-to-one correspondence with integral curves of a certain quadratic
(homogeneous) vector field on the corresponding Lie algebra (cf. Section \ref{sectionEA}). We will thus refer to this method as the
``Euler-Arnold formalism'' and to the quadratic vector field in question as the ``Euler-Arnold vector field''. The Euler-Arnold
formalism has a great advantage in the study of completeness of geodesics. In fact, the geodesics become described as the integrals
curves of a quadratic (homogeneous) vector field on $\R^n$. The study of the completeness of a vector field is played on the behavior
at infinity.  Since $\R^n$ (resp. $\C^n$) has a simple compactification, the projective space $\R\p(n)$ (resp. $\C\p(n)$) that allows us
to have control on this behavior at infinity. Also, the fact that the vector field is algebraic (in fact, polynomial) is important.
Indeed, there is a vast literature on algebraic differential equations that can be brought to bear. For example, certain ideas from
\cite{RR_Applications} will be used in the course of this work.

The task of classifying metrics on an arbitrary (non-compact) Lie group remains a daunting one, even in the semisimple case (see Section \ref{sec:SLC}  for more details). However,
for the lowest dimensional simple Lie group $\SLR$ a complete classification can be indeed achieved. As a
matter of fact, the problem of studying (in)complete metrics on $\SLR$ has a history that goes back a few decades. A first study of completeness
of left-invariant pseudo-Riemannian metrics on $\SLR$ was developed by Guediri-Lafontaine \cite{G-L} in 1995. Later, in  2008,
Bromberg and Medina \cite{B-M} provided the complete classification. A more geometric approach to the same
problem was given by Tholozan \cite{Tholozan} in 2014, as part of his Ph.D. thesis.

In this paper, we present an alternative approach to the problem of classification of geodesically complete left-invariant metrics on $\SLR$. This dynamical
approach is build on ideas of \cite{RR_Applications} to estimate the size of the domains of definition of the geodesics, which we followed
 to ensure the conditions under which such a domain is strictly contained in $\R$. This allowed us to reobtain the
classification of complete left-invariant pseudo-Riemannian metrics previously provided in \cite{B-M}. In fact, this allowed us to go further and we also present a detailed study of completeness or incompleteness for every single geodesic of the metrics in question.

The structure of the paper can be essentially divided into three parts. The first part (Sections \ref{Sec: holomorphic-Riem},
\ref{Sec: holomorphic-Lie}, \ref{sectionEA}) is a discussion on (the not so well-known) holomorphic-Riemannian geometry of complex manifolds with special attention to Lie groups,
leading up to the description of the Euler-Arnold formalism. The second part (Sections \ref{sec: LP_equations}, \ref{Sec:Case1},
\ref{Sec:Case3}, \ref{sec:Case2and4}) concerns the classification of completeness of left-invariant pseudo-Riemannian metrics on $\SLR$
and the dynamical behavior of their geodesic flows. The third and final part of the paper (Section \ref{sec:SLC}) is concerned
with left-invariant holomorphic  metrics on $\SLC$.

Sections \ref{Sec: holomorphic-Riem} and~\ref{Sec: holomorphic-Lie} present the preliminary material necessary in the sequel in order
to provide a self-contained exposition. Since the study of pseudo-Riemannian geometry and of real Lie groups is more familiar, we present
our material in the holomorphic setting, bearing in mind that most of the constructions can find direct analogues in the real context and
pointing out the differences where needed.

In Section~\ref{sectionEA}, we develop the Euler-Arnold formalism for completeness of holomorphic metrics. There are several approaches in the literature to
establish this result; in our work, we used the methods (albeit in the real case) proposed by Tholozan \cite{Tholozan}  since they are more adapted to the complex setting. In the case of an orthogonal Lie group, the Euler-Arnold equations are more tractable and can be formulated in terms of a Lax-pair. The added feature that Lax equations (or, equivalently, the Euler-Arnold equations) come with two first integrals allowed us to establish the following general result.

\begin{teo}\label{Teo: 1.1}
Let $G$ be a Lie group equipped with a bi-invariant pseudo-Riemannian metric. Then $G$ can be endowed with a complete left-invariant pseudo-Riemannian metric of every possible signature.
\end{teo}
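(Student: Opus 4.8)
The plan is to run everything through the Euler--Arnold formalism of Section~\ref{sectionEA}. A bi-invariant pseudo-Riemannian metric on $G$ is the same datum as an $\Ad$-invariant non-degenerate symmetric bilinear form $Q$ on $\mathfrak{g}$ --- equivalently, one for which every $\ad_X$ is skew-symmetric. Fix such a $Q$, of signature $(p_0,q_0)$ with $p_0+q_0=\dim G=:n$. Any left-invariant metric on $G$ is then of the form $g_e(X,Y)=Q(\mathcal{A}X,Y)$ for a unique $Q$-self-adjoint invertible operator $\mathcal{A}$ on $\mathfrak{g}$, and conversely every such $\mathcal{A}$ defines a left-invariant metric, whose signature is that of the quadratic form $X\mapsto Q(\mathcal{A}X,X)$. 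So the freedom we have is exactly the choice of $\mathcal{A}$.

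Next I would record the Lax-pair form of the Euler--Arnold equation for such an (``orthogonal'') metric: writing $M=\mathcal{A}\Omega$ for the body momentum, the equation reads $\dot M=[M,\mathcal{A}^{-1}M]$, and because $\ad$ is $Q$-skew this carries the two first integrals
\begin{equation*}
E(M)=Q(M,\mathcal{A}^{-1}M)\qquad\text{and}\qquad C(M)=Q(M,M),
\end{equation*}
the first being (a multiple of) the energy and the second the analogue of $\Tr(M^2)$; both are quadratic forms on $\mathfrak{g}\cong\mathbb{R}^n$. By the correspondence of Section~\ref{sectionEA}, a geodesic of $g$ is complete as soon as the associated integral curve of the (quadratic, homogeneous) Euler--Arnold field is defined on all of $\mathbb{R}$, and for a quadratic field this holds as soon as the integral curve stays bounded. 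Hence it suffices to choose $\mathcal{A}$, realizing the prescribed signature, so that some fixed linear combination $aE+bC$ is positive definite: its level sets are then compact ellipsoids, each integral curve lies on one of them, so all integral curves are bounded, and all geodesics are complete.

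The core is therefore a linear-algebra step. Given a target signature $(p,q)$, $p+q=n$, pick a $Q$-orthonormal basis $e_1,\dots,e_n$, so $Q(e_i,e_j)=\epsilon_i\delta_{ij}$ with $\epsilon_i=\pm1$, and look for $\mathcal{A}=\Diag(\lambda_1,\dots,\lambda_n)$ in this basis, $\lambda_i\neq0$. Then $g(e_i,e_i)=\epsilon_i\lambda_i$, so the signature requirement prescribes how many of the products $\epsilon_i\lambda_i$ must be positive; and $aE(M)+bC(M)=\sum_i\epsilon_i\bigl(a\lambda_i^{-1}+b\bigr)m_i^2$, which is positive definite exactly when $\epsilon_i(a\lambda_i^{-1}+b)>0$ for all $i$, i.e. when the affine function $t\mapsto at+b$ is positive on $\{\lambda_i^{-1}:\epsilon_i=+1\}$ and negative on $\{\lambda_i^{-1}:\epsilon_i=-1\}$. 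Since the reciprocals $\lambda_i^{-1}$ may be placed at will in $\mathbb{R}\setminus\{0\}$, one chooses a separating point $\mu_0$ (to the left or to the right of $0$) and distributes the reciprocals on its two sides with the appropriate signs; a short case analysis on the position of $\mu_0$ shows that every value $p\in\{0,1,\dots,n\}$ can be realized together with the definiteness of some $aE+bC$. Fixing such an $\mathcal{A}$ gives a complete left-invariant metric of signature $(p,q)$.

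The step I expect to be the genuine obstacle is precisely this simultaneous fulfilment: realizing an arbitrary signature is trivial, and making a combination of $E$ and $C$ definite is easy in isolation, but arranging both at once is what dictates the shape of the construction --- and it also explains why the energy alone, which is definite only when the metric itself is definite, does not suffice, so that the existence of the \emph{second} first integral coming from the Lax structure is essential. The remaining ingredients --- the equivalence between bi-invariance and $\Ad$-invariance of $Q$, the passage from boundedness of the Euler--Arnold curve to completeness of the geodesic, and the reconstruction of $g(t)$ on $G$ from $\Omega(t)$ --- are standard and already contained in Section~\ref{sectionEA}.
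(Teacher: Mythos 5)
Your proposal is correct and follows essentially the same route as the paper: choose $\Phi$ (your $\mathcal{A}$) diagonal in a $B$-orthonormal basis, use the two first integrals of the Lax-pair equation, and arrange the eigenvalues so that a suitable linear combination of them is positive definite, trapping every Euler--Arnold integral curve in a compact set and hence forcing completeness for each prescribed signature. The paper simply makes your ``short case analysis'' explicit by taking $\nu_1,\dots,\nu_p>1$, $\mu_1,\dots,\mu_r<0$, $0<\mu_{r+1},\dots,\mu_q<1$ with the combination $J=I_2-I_1$, and handles the remaining signatures by replacing $q$ with $-q$.
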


In Sections \ref{sec: LP_equations}, \ref{Sec:Case1}, \ref{Sec:Case3}, \ref{sec:Case2and4} we focus on $\SLR$. In Section~\ref{sec: LP_equations}, we present two important lemmas which allow us to control the
orthogonality and bracket relations on $\slR$. This outlines the special character of $\SLR$ and makes possible to have an explicit
description of the Euler-Arnold differential system. The study is conducted in four subcases corresponding to the normal forms of
the isomorphism which relates the metric in question to the Killing form. Sections \ref{Sec:Case1}, \ref{Sec:Case3} and \ref{sec:Case2and4}
are devoted to the detailed study of geodesic (in)completeness for each of the subcases established in the previous section. For
example, it was known that every single metric whose associated isomorphism has a (unique real) eigenvalue with algebraic multiplicity
3 but geometric multiplicity $1$ admits an incomplete geodesic. We show in this paper that there is no geodesic for
such a metric whose domain of definition is a finite interval $]a,b[$ for some $a,b \in \R$ with $a<b$. To be more precise, we proved the
following.

\begin{teo}
Consider a metric whose associated isomorphism has a (unique real) eigenvalue with algebraic multiplicity 3 but geometric multiplicity~$1$. Then there exists an invariant plane for the geodesic flow over which all geodesics are complete. Furthermore, all the
other geodesics are $\R^+$ or $\R^-$ complete, although not complete.
\end{teo}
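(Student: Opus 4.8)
The plan is to exploit the Euler-Arnold formalism, which presents the geodesics as integral curves of a homogeneous quadratic vector field $X$ on $\slR \cong \R^3$. The hypothesis on the associated isomorphism $L$ (eigenvalue of algebraic multiplicity $3$, geometric multiplicity $1$) is precisely one of the four Jordan normal forms identified in Section~\ref{sec: LP_equations}, so I would begin by writing $L$ in its Jordan form and, using the two explicit lemmas on orthogonality and bracket relations on $\slR$, produce the explicit polynomial expression for $X$ in suitable coordinates $(x,y,z)$ adapted to the Jordan chain. I expect the resulting system to have a distinguished coordinate direction (the eigenvector) whose vanishing defines an invariant plane $P$: this is the candidate for the plane on which all geodesics are complete. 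The first step is therefore to verify invariance of $P$ — this should be immediate from the form of $X$ — and then to show completeness of the restricted flow, which reduces to a two-dimensional homogeneous quadratic system; here the first integrals coming from the Lax-pair structure (the two conserved quantities mentioned before Theorem~\ref{Teo: 1.1}) should force the integral curves in $P$ to be algebraic curves that can be integrated in closed form, yielding global solutions.

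For the geodesics outside $P$, the strategy is the one advertised in the introduction: analyze the behavior at infinity by passing to the compactification $\R\p(3)$ and blowing up the line at infinity, then use the estimates of \cite{RR_Applications} on the size of the maximal interval of definition. Concretely, I would homogenize $X$, identify the singular points of the induced foliation on the plane at infinity, and determine which of them are $\omega$- or $\alpha$-limit points of the trajectories emanating from finite points off $P$. The key dichotomy to establish is: along every such trajectory the solution escapes to infinity in one time direction in \emph{infinite} time (so that side is complete), while in the other direction it either also takes infinite time or — and this is what must be ruled out — reaches infinity in finite time. The claim of the theorem is that the finite-time blow-up never happens, i.e. that the domain is a half-line $\R^+$ or $\R^-$ but never a bounded interval $]a,b[$.

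The main obstacle, and the heart of the argument, is precisely this last point: proving that no geodesic off $P$ can have a two-sidedly finite domain. I would attack it by using the conserved quantities to reduce the system along a level set to a single ODE of the form $\dot{u}^2 = Q(u)$ or $\dot{u} = R(u)$ with $Q,R$ polynomial (or rational), and then the finiteness of the domain on a given side is governed by the convergence or divergence of an integral $\int^{\pm\infty} du/\sqrt{Q(u)}$ (or $\int du/R(u)$). Because of the nilpotent part of $L$, I expect the relevant polynomial to have degree low enough (at most quadratic after using the integrals) that at least one of the two improper integrals diverges, giving $\R^+$- or $\R^-$-completeness; the subtlety is to do the bookkeeping uniformly over all level sets and all the relevant trajectories, and to confirm that the one remaining finite-time side genuinely occurs (so that the geodesic is indeed incomplete, consistent with the previously known fact) rather than producing full completeness off $P$ as well. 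Matching the two first integrals against the Jordan structure to get the precise degree count is where the real work lies; once the degrees are pinned down, the completeness statements follow from elementary analysis of the resulting quadratures.
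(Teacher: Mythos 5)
Your proposal follows essentially the same route as the paper's treatment of this case (Section~\ref{sec:Case2and4}, case 4): write the quadratic Euler--Arnold/Lax vector field in the $B$-pseudo-orthonormal basis of Lemma~\ref{lm:B}, extend it to $\R\PP(3)$, and decide completeness geodesic by geodesic through the convergence or divergence of the time-form integral along the invariant cones over the leaves of the foliation induced at infinity, which indeed has exactly two singular points $p$ and $q$ with finite escape time occurring only on the $p$-side, so that no leaf off the distinguished plane has a bounded two-sided domain. The one adjustment is the identification of that plane: it is $\{z_3=0\}=\ker(\Phi-\lambda)^2$, spanned by the eigenvector together with the first generalized eigenvector, rather than the plane where the coordinate along the eigenvector vanishes (which is not invariant); on $\{z_3=0\}$ the restricted system is simply $\dot z_1=0$, $\dot z_2=-\zeta\nu^2 z_1^2$, so completeness there is immediate without invoking the first integrals, and the invariance check you already planned would have redirected your guess to this plane.
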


This contrasts with the case where the isomorphism associated with the metric has two complex (non-real) eigenvalues. In this case
the domain of definition of almost all geodesics is a finite interval, $]a,b[$ for some $a,b \in \R$ with $a<b$. In fact, we proved the following.

\begin{teo}
Consider a metric whose associated isomorphism has two complex (non-real) eigenvalues. Then there exists an invariant plane for the geodesic
flow over which all geodesics are complete. Furthermore, there exists an invariant plane over which some of the geodesics are $\R^+$ or
$\R^-$ complete and the others have a finite interval as domain of definition. All the remaining geodesics are incomplete with a finite
interval as domain of definition.
\end{teo}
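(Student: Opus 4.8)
The plan is to reduce the Euler--Arnold system on $\slR$ to a single scalar equation.  In the case at hand the isomorphism $L$ associated with the metric has a conjugate pair of non--real eigenvalues $\alpha\pm i\beta$ (with $\beta\neq0$) and a simple real eigenvalue $\gamma$.  One first observes that the $\gamma$--eigenline $\ell\subset\slR$ is non--degenerate for the Killing form $\kappa$, that $\kappa|_\ell$ is definite, and that $\kappa$ restricted to the $L$--invariant plane $\Pi=\ell^{\perp_\kappa}$ is Lorentzian (otherwise $L|_\Pi$, self--adjoint for a definite form, would have real eigenvalues); this permits a choice of basis $(f_1,f_2,f_3)$ of $\slR$, adapted simultaneously to $\kappa$ and to $L$, with $f_1$ spanning $\ell$ and $L$ block diagonal with a $1\times1$ block $\gamma$ and a $2\times2$ rotation--type block.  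Feeding this into the Euler--Arnold equations by means of the bracket and orthogonality lemmas of Section~\ref{sec: LP_equations} and writing $M=xf_1+yf_2+zf_3$, the equations become
\begin{equation*}
\dot x=\mp A\,(y^{2}+z^{2}),\qquad \dot y=x\,(Ay+Bz),\qquad \dot z=x\,(By-Az),
\end{equation*}
with $A=\beta/(\alpha^{2}+\beta^{2})\neq0$ and $B$ an explicit constant in $\alpha,\beta,\gamma$; by the Lax--pair structure of Section~\ref{sectionEA} the energy $H$ and the Killing--Casimir $C=\kappa(\,\cdot\,,\,\cdot\,)$ are first integrals.  One records that the equilibrium set is the line $\ell$; that the only invariant planes through the origin are $P_{\pm}=\{\,y=c_{\pm}z\,\}$, where $c_{\pm}$ are the two roots of $Bc^{2}-2Ac-B=0$ and both contain $\ell$; and that $P_{+}\cup P_{-}=\{\,H=C/\gamma\,\}$, since $H-C/\gamma=\kappa\bigl((L^{-1}-\gamma^{-1}\mathrm{Id})M,M\bigr)$ factors as the product of the two linear forms cutting out $P_{\pm}$.

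The key step is the reduction to one dimension.  Using the two first integrals one expresses both $y^{2}-z^{2}$ and $yz$, along any fixed trajectory, as explicit quadratic polynomials in $x$; hence $y^{2}+z^{2}=\sqrt{Q(x)}$ (because $(y^{2}+z^{2})^{2}=(y^{2}-z^{2})^{2}+4(yz)^{2}$) for a quartic $Q$ with positive leading coefficient, so $\sqrt{Q(x)}\sim(\mathrm{const})\,x^{2}$ as $\lvert x\rvert\to\infty$, and the first equation becomes the autonomous scalar equation $\dot x=\mp A\sqrt{Q(x)}$.  A short computation shows that $Q$ can vanish at a real point only on $P_{+}\cup P_{-}$, where moreover $Q$ equals, up to a positive constant, the square of a quadratic polynomial $q$.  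Now consider the plane, among $P_{+}$ and $P_{-}$, on which $\kappa$ restricts to a \emph{definite} form (exactly one of $P_{+},P_{-}$ has this property, since $\kappa$ has signature $(2,1)$ and each $P_{\pm}$ contains $\ell$): there the common level sets of $H$ and $C$ are ellipses, so every orbit is bounded, $q$ has constant sign along it, the scalar equation $\dot x=\mp A'\lvert q(x)\rvert$ is of logistic type, and the orbit is a heteroclinic connection between the two equilibria on $\ell$ that it meets, approached exponentially --- hence in infinite time; so all geodesics over this plane are complete, which is the invariant plane of the first assertion.  On the other plane $\kappa$ restricts to an indefinite form and the level sets are branches of hyperbolas or lines: an orbit meeting an equilibrium on $\ell$ is asymptotic to it in infinite time at one end, and since $\lvert q(x)\rvert\sim(\mathrm{const})\,x^{2}$ it escapes to infinity in finite time at the other, so it is $\R^{+}$ or $\R^{-}$ complete but not complete; an orbit meeting no equilibrium has $x$ strictly monotone and reaches infinity in finite time at both ends, so its domain is a bounded interval.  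This is the invariant plane with mixed behaviour.  Finally, for a geodesic whose momentum lies outside $P_{+}\cup P_{-}$ one has $Q(x)>0$ for every real $x$, so $x(t)$ is strictly monotone and, since $\int^{\pm\infty}dx/\sqrt{Q(x)}$ both converge, $x$ --- hence $\lVert M\rVert$, the Euler--Arnold field being polynomial --- reaches infinity in finite forward \emph{and} finite backward time; so the maximal interval of definition is a bounded interval $\,]a,b[\,$, which is the remaining assertion.

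I expect the main technical work to be concentrated in two places.  The first is extracting the precise normal form of the Euler--Arnold system, and especially the two identities giving $y^{2}-z^{2}$ and $yz$ as functions of $x$, cleanly from the lemmas of Section~\ref{sec: LP_equations}.  The second is the bookkeeping on the \emph{mixed} plane: one must match each orbit with the branch of the scalar dynamics it follows and dispose of the few degenerate parameter values --- for instance $B=0$, where $P_{\pm}$ degenerate to the coordinate planes $\{y=0\}$ and $\{z=0\}$ --- so that the trichotomy ``$\R^{+}$ complete / $\R^{-}$ complete / bounded interval'' is obtained without gaps.  The finite--time blow--up estimates themselves are of the kind used in \cite{RR_Applications}, applied here to the transparent scalar equation $\dot x=\mp A\sqrt{Q(x)}$ rather than to the full compactified field in $\R\p(3)$.
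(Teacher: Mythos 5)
Your proposal is correct, and it reaches the stated trichotomy by a genuinely different route from the paper. The paper compactifies the Euler--Arnold field to $\R\p(3)$, studies the induced foliation on the plane at infinity (two singular points $p_1,p_2$ plus a saddle at the origin of the $(u_1,u_2)$ chart), and decides completeness leaf by leaf by computing the height function on each invariant cone and testing convergence of the time-form integral; your two distinguished planes are exactly the cones over the separatrices $x_2=(-b\mp\sqrt{a^2+b^2})/a$ that appear there. You instead use the two first integrals to eliminate $(y,z)$ and collapse everything onto the scalar equation $\dot x= b\sqrt{Q(x)}$ with
\[
Q(x)=(I_1-x^2)^2+\left(\frac{I_2-\gamma I_1+ax^2}{\zeta}\right)^2 ,
\]
a sum of two squares of quadratics. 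That is the decisive observation: $Q$ has a real root only when the two quadratics share one, which forces $I_2=\eta I_1$, i.e.\ the orbit to lie on $P_+\cup P_-$, and any such root is automatically of order at least two, so the corresponding equilibrium is reached only in infinite time, while $\int^{\pm\infty}dx/\sqrt{Q}$ always converges because $\sqrt{Q}\sim Cx^2$; since $y^2+z^2=\sqrt{Q(x)}$ is controlled by $x$, completeness of the full orbit is equivalent to completeness of $x(t)$. Your method is more elementary and self-contained (no compactification, no foliations at infinity) and disposes of the generic orbit in one line; the paper's method is heavier here but is the one that applies uniformly to all four normal forms and additionally yields the global phase portraits at infinity. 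Two points to tighten when writing this up. First, the claim that exactly one of $P_\pm$ carries a definite restriction of $\kappa$ does not follow from the signature alone (a priori both could be Lorentzian); it follows from the relation $c_+c_-=-1$ satisfied by the roots of your quadratic, which together with $c_++c_-\neq 0$ (equivalently $\zeta\neq 0$) forces one root to have modulus greater than $1$ and the other less than $1$. Second, you should say explicitly why an orbit cannot arrive at a zero of $Q$ in finite time: the naive scalar equation $\dot x=b\sqrt{Q}$ would permit finite-time arrival at a simple zero, and the argument is saved precisely because zeros of a sum of two squares are double zeros (equivalently, by uniqueness for the Lipschitz three-dimensional system at the equilibrium line); on $P_\pm$ you in effect do this by writing $Q$ as a constant times $q^2$, but the remark deserves to be made once and for all.
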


The final section, Section~\ref{sec:SLC}, is devoted to the study of $\SLC$.
We start by presenting brief considerations why a direct and similar approach to that of $\SLR$ is unfeasible.  We then turn our attention to the complex Lie group $\SLC$ and obtain a complete characterization for completeness of holomorphic Riemannian left-invariant metrics. More precisely, we have the following theorem.

\begin{teo}
Let $q$ be the holomorphic metric on $\mathfrak{sl}(2,\mathbb{C})$ defined by $q(X,Y) = B(\Phi X, Y)$, where $B$ is the Killing form
and $\Phi$ a $B$-self-adjoint isomorphism. Then, $q$ is a complete holomorphic metric if and only if $\Phi$ has an eigenvalue whose
eigenspace has dimension at least 2.
\end{teo}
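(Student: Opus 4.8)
The plan is to run the Euler--Arnold formalism of Section~\ref{sectionEA} for $G=\SLC$ and then argue by a case analysis on the Jordan type of $\Phi$. Fixing the Killing form $B$, the geodesics of $q$ are, up to the sign convention of Section~\ref{sectionEA}, the integral curves of the holomorphic Euler--Arnold vector field $\mathcal{X}_q(\Omega)=\Phi^{-1}[\Phi\Omega,\Omega]$ on $\slC\cong\C^3$, which is homogeneous of degree $2$; moreover $q$ is complete if and only if every maximal integral curve of $\mathcal{X}_q$ is defined on all of $\C$. The first step is a reduction to normal form: every automorphism of $\slC$ is inner and preserves $B$, so replacing $\Phi$ by an $\mathrm{Ad}$-conjugate changes nothing, and rescaling $B$ (equivalently $q$) only reparametrises the geodesics; hence $\Phi$ may be assumed to be in Jordan form. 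Over $\C$, a $B$-self-adjoint operator on a $3$-dimensional space equipped with a nondegenerate symmetric form falls into six normal forms, which we split according to whether $\Phi$ has an eigenvalue of geometric multiplicity at least $2$: the three \emph{large} cases $\Phi=\lambda\,\mathrm{Id}$, $\Phi=\mathrm{diag}(\lambda,\lambda,\mu)$ with $\lambda\neq\mu$, and $\Phi=J_2(\lambda)\oplus J_1(\lambda)$; and the three \emph{small} cases $\Phi=\mathrm{diag}(\lambda_1,\lambda_2,\lambda_3)$ with the $\lambda_i$ distinct, $\Phi=J_2(\lambda)\oplus J_1(\mu)$ with $\lambda\neq\mu$, and $\Phi=J_3(\lambda)$ (all eigenvalues being nonzero, $\Phi$ being an isomorphism). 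In each case one writes $\mathcal{X}_q$ explicitly using the bracket relations of $\slC$ in a $B$-adapted basis $e_1,e_2,e_3$ (eigenspaces of distinct eigenvalues are $B$-orthogonal, and each $[e_i,e_j]$ is $B$-orthogonal to $e_i$ and to $e_j$), the complex counterparts of the lemmas of Section~\ref{sec: LP_equations}.

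For the ``if'' direction one integrates the three large systems directly. If $\Phi=\lambda\,\mathrm{Id}$ then $q=\lambda B$ is bi-invariant, $\mathcal{X}_q\equiv 0$, and the geodesics are one-parameter subgroups, defined on all of $\C$ (this is the baseline instance of Theorem~\ref{Teo: 1.1}). If $\Phi=\mathrm{diag}(\lambda,\lambda,\mu)$, $B$-orthogonality of the eigenplanes forces the $e_3$-component of $[\Phi\Omega,\Omega]$ to vanish and the remaining two components to be bilinear in the two $\lambda$-eigenplane coordinates and the now conserved $\mu$-eigenvector coordinate; the Euler--Arnold system becomes a constant-coefficient linear system in two variables with the third variable frozen, hence has entire solutions. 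If $\Phi=J_2(\lambda)\oplus J_1(\lambda)$, writing $\Phi\Omega=\lambda\Omega+N\Omega$ makes the term $[\lambda\Omega,\Omega]$ drop out, and the remaining system is strictly triangular: one coordinate stays constant, the next is affine in $t$, the last quadratic in $t$. In all three large cases every maximal integral curve is entire, so $q$ is complete.

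For the ``only if'' direction one produces, in each small case, a geodesic that develops a singularity in finite complex time. The uniform mechanism is an invariant line $\C v$ of $\mathcal{X}_q$ through the origin on which the restricted dynamics is $\dot{\varphi}=s\varphi^2$ with $s\neq 0$, so that $\varphi$, and hence $\Omega$, blows up at finite $t$. When $\Phi=\mathrm{diag}(\lambda_1,\lambda_2,\lambda_3)$ with distinct $\lambda_i$, the coordinate lines $\C e_i$ are lines of fixed points ($s=0$), but there is a further invariant line spanned by a $v$ with all coordinates nonzero, the squares of its coordinates being in the ratio fixed by the $\lambda_i$ and the structure constants, and this line has $s\neq 0$. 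When $\Phi=J_2(\lambda)\oplus J_1(\mu)$ with $\lambda\neq\mu$, two of the three coordinates satisfy a closed quadratic planar subsystem admitting an invariant line $z=my$, with $m$ a nonzero root of an explicit quadratic; along it $\dot{y}=\kappa y^2$ with $\kappa\neq 0$. When $\Phi=J_3(\lambda)$, the top Jordan coordinate already satisfies a closed Riccati equation $\lambda\dot{z}=c\,z^2$ with $c\neq 0$, so every geodesic with $z(0)\neq 0$ is incomplete. This yields the stated equivalence; along these invariant sets a finer integration, in the spirit of Sections~\ref{Sec:Case3} and~\ref{sec:Case2and4} and of \cite{RR_Applications}, describes the maximal domain of each individual geodesic, and, after homogenising $\mathcal{X}_q$ to a singular foliation of $\C\p(3)$, realises the invariant lines above as points of the hyperplane at infinity reached by a leaf in finite time.

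The step I expect to be the main obstacle is this ``only if'' direction: verifying that the escaping configuration is genuinely present and nondegenerate in each small case, for all admissible values of the eigenvalues and structure constants. This comes down to the fact that $\slC$ has no $2$-dimensional abelian subalgebra, so no bracket $[e_i,e_j]$ with $e_i,e_j$ linearly independent can vanish; this is what keeps the slopes $m$, the coordinate ratios, and the constants $s,\kappa,c$ away from $0$, and it is also the degeneracy that renders the three large systems globally integrable. Pinning down the precise bracket relations in the $B$-adapted bases and then solving the six explicit quadratic systems is the remaining, essentially computational, part of the argument.
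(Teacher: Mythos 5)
Your proposal is correct and follows essentially the same route as the paper: the Euler--Arnold/Lax reduction, the complex counterparts of Lemmas~\ref{lm:A} and~\ref{lm:B} to put $\Phi$ in a $B$-adapted normal form, explicit integration of the repeated-eigenvalue systems to get completeness, and finite-time blow-up in the remaining cases. Your blow-up mechanism --- invariant lines or planes through the origin on which the flow reduces to $\dot\varphi = s\varphi^2$ with $s \neq 0$ --- is just the affine (idempotent) formulation of the paper's argument, which exhibits the very same lines as leaves transverse to the plane at infinity in $\C\p(3)$ crossed in finite time.
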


As a corollary, we show that every complex simple Lie group can be endowed with an incomplete holomorphic metric.

As seen from
the example of $\SLC$,  complete holomorphic metrics are ``rare''
 and, therefore, the problem of classifying semicomplete holomorphic metrics becomes more interesting.
A vector field or, equivalently, a differential system, is said to be semicomplete if its solutions admit a maximal domain of definition (that
need not coincide with $\C$), see \cite{Reb}. If a vector field is complete, then it is necessarily semicomplete. Although the converse does not
hold, a semicomplete vector field can be completed in the sense that it can be realized by a complete vector field on a suitable space that
need not be a Lie group any more (see \cite{Palais}, \cite{FRR}, \cite{Guillot} for further details). In fact, this new space is not necessarily Hausdorff. It is perhaps relevant to understand these completions as they are important in the theory of transformation groups and may be relevant in applications to physics or other fields of geometry.

Finally, as a unexpected outcome of our work concerning the general theory of semicomplete vector fields, we provide an example of
a family with $2$-complex parameters of iso-spectral quadratic semicomplete vector fields on $\C^3$, up to
linear conjugation with the terminology of \cite{Guillot}, see Theorem~\ref{Thm:semicomplete}. Note that this family is rather
different from the $2$-parameter family provided in \cite{Guillot-CRAS} and sits closer to the framework of ``quadratic elliptic
foliations on the complex projective plane'' as in \cite{gautier}.


\section{Holomorphic-Riemannian geometry}\label{Sec: holomorphic-Riem}

The foundations of pseudo-Riemannian geometry are well-known and there is extensive literature on the subject (see, for instance, \cite{ON}). In this section.  we will briefly establish preliminary notions and results of its holomorphic counterpart, bearing in mind that most of definitions and results apply analogously in the smooth real setting.  We will follow the approach of LeBrun \cite{LeBrun} and point out the differences where relevant. The topic of holomorphic-Riemannian geometry has been studied in more recent years in the works of Biswas, Dumitrescu and Zeghib (cf.  \cite{B-D, D,D-Z}).

\smallskip

Let $M$ be a complex manifold of (complex) dimension $n$ with holomorphic tangent and cotangent bundles denoted, respectively, by $T_0 M$ and $T_0^\ast M$ .

A holomorphic Riemannian metric (or, simply, holomorphic metric) on $M$ is a holomorphic covariant symmetric 2-tensor i.e. a holomorphic section $g: M \longrightarrow T^\ast_0 M \otimes T^\ast_0 M$ which is symmetric and non-degenerate.

In terms of local coordinates, a holomorphic metric $g$ can be expressed as follows. If  $\mathcal{U}$ is a domain of $M$ such that we have a chart $\mathcal{U} \longrightarrow \mathbb{C}^n$ with coordinates $(z_1,\cdots, z_n)$, then $g|_\mathcal{U}$ can be written as
$$
g|_\mathcal{U} = \sum_{i\leq j} g_{ij}dz^i\otimes dz^j
$$
with $\bar{\partial}g_{ij}=0$ (i.e. $g$ is holomorphic), $g_{ij}=g_{ji}$ (i.e. $g$ is symmetric).

Clearly, such a tensor carries no signature, but it is still possible to define the notion of non-degenerate metric by prescribing that the map $T_0 M \longrightarrow T_0^\ast M$ given by $X\longmapsto g(X, -)$ is an isomorphism between the tangent and the cotangent holomorphic bundles. As in the real setting, this condition is translated by $\mathrm{det}[g_{ij}]\neq 0$.

A holomorphic-Riemannian manifold  is thus a pair $(M,g)$ where $M$ is a complex manifold and $g$ is a holomorphic Riemannian metric on $M$.

Notice that the notion of holomorphic metric should not be confused with that of Hermitian metric. Observe also that the real part of a holomorphic metric is a pseudo-Riemannian metric of signature $(n,n)$. Indeed, it is always possible to find a local basis of holomorphic vectors $(X_1, \cdots, X_n)$ such that $g(X_i, X_j) = \delta_{ij}$ and then $(X_1, \cdots, X_n, iX_1,\cdots, i X_n)$ is a local basis for the underlying real manifold  which is orthonormal for $\mathrm{Re}\, g$ with signature $(n,n)$.

It is a well-known fact that a compact Lorenztian manifold has Euler characteristic equal to zero, and this is simply due to the fact that there must exist a nowhere vanishing (timelike) vector field.  The existence of a holomorphic metric for compact complex manifolds is also restrictive in terms of its topology, cf. \cite{D}. The existence of such a metric fixes an isomorphism between the holomorphic tangent and the holomorphic cotangent bundles, which implies that the canonical bundle $K = \Lambda^n (T^\ast_0 M)$ of $M$ is isomorphic to its dual, the anticanonical bundle $K^\ast$, and this yields the vanishing of the first Chern class of $M$, since $c_1(M) = c_1(K) = -c_1(K^\ast)$.

\smallskip

With the notion of holomorphic metric understood, we can now introduce the homolorphic Levi-Civita connection.
It can easily be proved, just like in the real setting that there exists a unique affine connection, which we will call the Levi-Civita connection, that is torsion-free and preserves the holomorphic metric, cf. \cite{LeBrun}. Moreover, this connection is also defined by the Kozsul formula as in pseudo-Riemannian geometry.



\smallskip

Given a holomorphic-Riemannian manifold $(M,g)$, an isometry is defined to be a biholomorphic map $\phi: M \longrightarrow M$ which preserves $g$, that is, such that the pullback satisfies $\phi^\ast g = g$. If the biholomorphism $\phi$ is defined only locally (between two open sets of $M$) we say that $\phi$ is a local isometry.

As in the real case, the set of local isometries is a pseudo-group for composition.
A similar proof to that of the classical result of pseudo-Riemannian geometry shows that the following three statements are equivalent for a holomorphic vector field $X$.
\begin{enumerate}
\item[(i)]  The (local) flow of $X$ preserves $g$, i.e., it acts by isometries;
\item[(ii)]  $\nabla X$ is a pointwise $g$-skew symmetric endomorphism of $T_0 M$;
\item[(iii)] $\mathcal{L}_X g =0$, where $\mathcal{L}$ represents the Lie derivative.
\end{enumerate}

A vector field which satisfies any of the above condition is called a Killing vector field.  Note that the set of Killing vector fields forms a Lie algebra with respect to the standard Lie bracket of holomorphic vector fields.
\smallskip

A notion of parallel transport can also be obtained in a classical fashion.  Consider $\gamma: A \longrightarrow M$, $A \subseteq \mathbb{C}$,  an immersed (i.e. $\dot{\gamma}(t) \neq 0$, for all $t \in A$) holomorphic curve, and let $\nabla_{\dot{\gamma}}$  denote the covariant derivative along $\gamma$.





\begin{definition}
The curve $\gamma$ is called a geodesic if $\dot{\gamma}$ is parallel along $\gamma$, that is, if $\nabla_{\dot{\gamma}}\dot{\gamma} =0$.
\end{definition}

Using local coordinates and considering the Christoffel symbols $\Gamma_{ij}^k$,  the curve $\gamma$ is a geodesic of $(M,g)$ if and only if it satisfies the equations
$$\ddot{\gamma}^k(t) + \dot{\gamma}^j(t) \dot{\gamma}^i(t) \Gamma_{ij}^k(\gamma(t))=0,$$
which are perfectly analogous to the geodesic equations of pseudo-Riemannian geometry.

\smallskip

Using the theorem of existence and uniqueness of ordinary differential equations, we know that given  a point $p$ in $M$ and a vector $v$ in the holomorphic tangent space of $M$ at $p$, there exists a ball $B(0,\delta)$ centered at $0\in \mathbb{C}$ with radius $\delta >0$ such that $\gamma: B(0,\delta) \longrightarrow M$ is the unique geodesic verifying $\gamma(0) = p$ and $\dot{\gamma}(0)= v$.

\smallskip

We can now introduce the notion of completeness and semicompleteness for geodesics.

\begin{definition}\label{def: geod-semicomplete}
Let $(M,g)$ be a holomorphic Riemannian manifold and $\nabla_{\dot{\gamma}}\dot{\gamma} =0$ be the geodesic equation of $(M,g)$. We say that the geodesic equation is semicomplete on an open set $U\subseteq M$ if for every $p \in U$ and $v \in T_p M$, there exists a neighborhood $V_p$ of $0\in \mathbb{C}$
and a curve $\gamma: V_p \longrightarrow U$ which satisfies the following conditions:
\begin{itemize}
\item[(i)] $\gamma(0) = p$, $\gamma'(0) = v$ and $\nabla_{\dot{\gamma}}\dot{\gamma} =0$;
\item[(ii)] for every sequence $\{t_i\}_{i\in \mathbb{N}}\subset V_p\subset \mathbb{C}$ which converges to a point $\hat{t}$ in the boundary of $V_p$, the sequence $\{\gamma(t_i)\}_{i\in \mathbb{N}}$ escapes every compact set of $U$.
\end{itemize}
\end{definition}

\begin{definition}
For a holomorphic Riemannian manifold $(M,g)$, the geodesic equation is said to be complete if there is a map $\Gamma\!\!: \mathbb{C}\times M \longrightarrow M$ such that for every $p$ in $M$, the curve $\gamma(t) = \Gamma(t,p)$ satisfies condition (i) of Definition \ref{def: geod-semicomplete}.

\end{definition}

The notion of geodesic completeness is clearly perfectly analogous to that of the real setting. Our definition of geodesic semicompleteness here is
a particular case of the general definition for (germs of) vector fields on complex manifolds, which was introduced by Rebelo in \cite{Reb}. In fact,
Definition~\ref{def: geod-semicomplete} can be rephrased by saying that the vector field on $TM$ associated with the geodesic flow is semicomplete
on $TU \subseteq TM$.

\smallskip

Note that for real ordinary differential equations the definition of semicompleteness is moot and the notion of maximal domain (or interval) of definition is well understood. This is an important difference between real and complex ODEs, since for the generic complex ODE the standard gluing procedure of local domains might define a multivalued solution (for more details, the reader is referred to the survey \cite{RR}). However, in the case where the geodesic equation is semicomplete on $M$, owing to condition (ii) of definition \ref{def: geod-semicomplete}, we will say that  $V_p$ is the \emph{maximal domain} of definition of the geodesic $\gamma$,  and our geodesics are well-defined in the sense of being univalued.

\medskip

They following lemma provides and interesting relation between Killing fields and geodesics and will also be useful in  Section \ref{sectionEA}.

\begin{lemma}\label{lem: Killing-geodesic-constant}
Let $(M,g)$ be a holomorphic Riemannian manifold. If $X$ is a Killing field and $\gamma$ is a geodesic then $g(X_{\gamma(t)},\dot{\gamma}(t))$ is constant.
\end{lemma}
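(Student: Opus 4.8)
The plan is to compute the derivative of $t \mapsto g(X_{\gamma(t)}, \dot{\gamma}(t))$ and show it vanishes. Since $\gamma$ is a geodesic, $\nabla_{\dot\gamma}\dot\gamma = 0$, and since $\nabla$ is a metric connection (it preserves $g$, as noted in the holomorphic Levi-Civita discussion), we have
\begin{equation*}
\frac{d}{dt}\, g(X_{\gamma(t)}, \dot\gamma(t)) = g\!\left(\nabla_{\dot\gamma} X, \dot\gamma\right) + g\!\left(X, \nabla_{\dot\gamma}\dot\gamma\right) = g\!\left(\nabla_{\dot\gamma} X, \dot\gamma\right).
\end{equation*}
The second term drops out by the geodesic equation, so it remains to see that the first term vanishes. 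This is where the Killing condition enters: by the equivalence (i)$\Leftrightarrow$(ii) recalled just before Definition~\ref{def: geod-semicomplete} (applied here), $\nabla X$ is a pointwise $g$-skew-symmetric endomorphism of $T_0 M$, i.e.\ $g(\nabla_Y X, Z) + g(Y, \nabla_Z X) = 0$ for all vector fields $Y, Z$. Taking $Y = Z = \dot\gamma$ gives $g(\nabla_{\dot\gamma} X, \dot\gamma) = 0$, which finishes the argument.

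A few points of care. First, one should note that the computation is entirely local along $\gamma$, so it makes sense even though $\gamma$ is only defined on an open subset $A \subseteq \mathbb{C}$; the function $g(X_{\gamma(t)}, \dot\gamma(t))$ is holomorphic in $t$, and a holomorphic function with vanishing derivative on a connected open set is constant. Second, the product rule for $\frac{d}{dt}$ of $g(\cdot,\cdot)$ along $\gamma$ in terms of the covariant derivative $\nabla_{\dot\gamma}$ is exactly the statement that $\nabla$ is compatible with $g$; this is the holomorphic analogue of the standard identity and was asserted to hold in the excerpt (the Levi-Civita connection preserves the holomorphic metric). If one wants to be fully explicit, this can be checked in local coordinates using the Christoffel symbols and the defining relation $\partial_k g_{ij} = \Gamma_{ki}^l g_{lj} + \Gamma_{kj}^l g_{il}$, but invoking metric compatibility directly is cleaner.

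There is essentially no obstacle here — the proof is two short lines once the right facts are assembled. The only thing that needs a moment's thought is making sure one uses the correct characterization of Killing fields: the relevant one is the $g$-skew-symmetry of $\nabla X$ (item (ii)), not directly the Lie derivative condition (iii) or the flow-invariance condition (i), although all three are equivalent. So the main (very minor) "obstacle" is simply bookkeeping: invoking metric compatibility of $\nabla$ to split the derivative, invoking the geodesic equation to kill one term, and invoking $g$-skew-symmetry of $\nabla X$ to kill the other. One could alternatively phrase the whole thing via the Lie derivative: $\mathcal{L}_X g = 0$ together with $\nabla_{\dot\gamma}\dot\gamma = 0$ yields the claim after a short manipulation, but the route through $\nabla X$ being skew is the most economical.
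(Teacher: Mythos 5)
Your proof is correct and follows essentially the same computation as the paper: differentiate $g(X,\dot\gamma)$ using metric compatibility, kill one term with the geodesic equation, and kill the other with the Killing condition. Your use of the $g$-skew-symmetry of $\nabla X$ with $Y=Z=\dot\gamma$ to conclude $g(\nabla_{\dot\gamma}X,\dot\gamma)=0$ directly is in fact a slightly cleaner phrasing of the paper's final step.
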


\begin{proof}

By definition of connection along a curve we have that
$\D{t}g(X,\dot{\gamma})=g(\nabla_{\dot{\gamma}}X,\dot{\gamma})+g(X,\nabla_{\dot{\gamma}}\dot{\gamma}).$
Since $\gamma$ is a geodesic the above expression simplifies to
$\D{t}g(X,\dot{\gamma})=g(\nabla_{\dot{\gamma}}X,\dot{\gamma}).$
Since $X$ is a Killing field then
$\D{t}g(X,\dot{\gamma})=-g(X,\nabla_{\dot{\gamma}}\dot{\gamma})$
Thus, using again the fact that $\gamma$ is a geodesic,  $\D{t}g(X,\dot{\gamma}) = 0$ and $g(X, \dot{\gamma})$ is constant.

\end{proof}


\section{Holomorphic-Riemannian Lie groups}\label{Sec: holomorphic-Lie}

We will now focus on the case where our manifold is a Lie group. A very detailed exposition of this topic, in the (perhaps less familiar) complex setting  can be found in the textbook by Lee, \cite{Lee}. Here we will only recall the basic material needed for our purposes.
As in the previous section, we will make our exposition in the complex case while bearing in mind the analogous definitions and results for the real case and pointing out the differences where needed.

\smallskip

The definition of a complex Lie group is very similar to that of a real Lie group, but instead of considering smooth group operations, we must considere holomorphic ones. More precisely, a connected complex manifold $G$ which is also equipped with a group structure is said to be a complex Lie group if the multiplication and the inversion operations are holomorphic maps.

\smallskip

Let $G$ be a complex Lie group with identity $e\in G$. As usual, we will denote by $L_g$
(resp. $R_g$, $C_g$) the left translation (resp. right translation, conjugation) map by $g \in G$ on $G$.

Take $X$ to be a vector field on the Lie group $G$.
Recall that $X$ is said to be left-invariant (resp. right-invariant) if it
coincides with its pullback by left
translations (resp. right translations).

Naturally, given a vector $x \in T_e G$, the vector $x$ can be extended to a left-invariant vector field $X$ by left-translating the tangent vector $x$ to other points of $G$. This allows us then to identify
the tangent space of $G$ at the identity, $\fg = T_e G$, with the space of left-invariant vector fields on $G$. Also, it is simple to see that such an identification is a $\CC$-linear map, thus endowing $\fg$ with the structure of a complex Lie algebra.

\smallskip

Let us recall the adjoint
representation and the infinitesimal adjoint representation of a Lie group $G$ which we will denote, as usual, by
$\mathrm{Ad}$ and $\mathrm{ad}$, respectively.

The adjoint representation of $G$ is the map
$\Ad:  G  \longrightarrow  \mathrm{GL}(\fg)$
where $\mathrm{Ad}_g$ is the linear map satisfying
$\Ad_g (x) = (D_e C_g) (x).$
Thus, the adjoint representation of $G$ is the derivative at the identity of the
conjugation map $C_g$ (viewed as an element of ${\rm Aut} (G)$, the automorphism group of $G$).
Note that, with the standard complex structure on $\mathrm{GL}(\fg) \simeq \mathrm{GL}(n, \mathbb{C})$, where $n=\mathrm{dim}\, G$, it is possible to prove that $\mathrm{Ad}$ is a holomorphic map.

\smallskip


The infinitesimal adjoint representation of $G$ (or, equivalently, the adjoint representation of the
Lie algebra $\fg$ associated to $G$) is the map
$\mathrm{ad}:  \fg  \longrightarrow  \mathrm{End}(\fg)$
where $\ad_x$ is defined as
$\ad_x y = (D_e \Ad(x))(y).$
The infinitesimal adjoint representation of $G$ is then the derivative at the identity of the adjoint representation. Moreover, we have the remarkable fact that
$$
\ad_x y = [X,Y]_e
$$
where $x,y$ are vectors in $T_e G$ and $X,Y$ are the left-invariant vector fields associated to $x$ and $y$ in the identification of $\fg = T_e G$ with the Lie algebra of left-invariant vector fields. From henceforth, we will simply write, where convenient and as is standard, $\ad_x y = [x,y]$.


\begin{obs}
If $G$ is a matrix group, that is, a Lie subgroup of the general linear group $\mathrm{GL}(m,\CC)$, for some $m\in \N$, then the adjoint representation is simply given by
$
\Ad_g(x) = gxg^{-1} ,
$
for every $g\in G$ and $x \in \fg$; also the infinitesimal adjoint representation is simply the usual commutation of matrices, i.e.,  for every  $x,y \in \fg$
$
\ad_x(y) = xy-yx .
$
\end{obs}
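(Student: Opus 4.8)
The plan is to unwind the definitions of $\Ad$ and $\ad$ recorded just above and to exploit the fact that, in a matrix group, both conjugation and commutation are \emph{algebraic} (indeed polynomial) operations on the ambient matrix entries, so that the relevant derivatives can be computed directly by differentiating matrix-valued curves. No deep input is needed: everything reduces to the definition $\Ad_g(x) = (D_e C_g)(x)$, the identification $\ad = D_e\Ad$, and elementary calculus of curves of matrices.

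For the adjoint representation, I would first observe that for fixed $g \in G$ the conjugation map $C_g \colon h \mapsto ghg^{-1}$ is the restriction to $G$ of the $\CC$-linear endomorphism $\kappa_g \colon M_m(\CC) \to M_m(\CC)$, $A \mapsto gAg^{-1}$, of the ambient space of $m \times m$ matrices. Since a linear map coincides with its own differential at every point, the differential of $C_g$ at the identity is simply the restriction of $\kappa_g$ to $T_e G = \fg$; that is, $\Ad_g(x) = (D_e C_g)(x) = gxg^{-1}$ for every $x \in \fg$. That $gxg^{-1}$ again lies in $\fg$ is automatic, being by construction the value of $\Ad_g$.

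For the infinitesimal adjoint representation, since $\Ad$ is \emph{not} linear in $g$, I would differentiate along a curve. Choose a holomorphic curve $g(t)$ in $G$ with $g(0) = e$ and $\dot g(0) = x \in \fg$, which exists since $\fg = T_e G$ is the tangent space to the complex manifold $G$ at $e$. By the previous step, $\Ad_{g(t)}(y) = g(t)\, y\, g(t)^{-1}$ for every $y \in \fg$, while by definition $\ad_x(y) = \D{t}\big|_{t=0}\,\Ad_{g(t)}(y)$. Differentiating the product $g(t)\,y\,g(t)^{-1}$ at $t=0$, and using the standard formula $\D{t}\big(g(t)^{-1}\big) = -\,g(t)^{-1}\dot g(t)\, g(t)^{-1}$ (obtained by differentiating $g(t)\,g(t)^{-1} = \mathrm{Id}$), together with $g(0) = e$ and $\dot g(0) = x$, yields $\ad_x(y) = xy - yx$, which is precisely the matrix commutator $[x,y]$.

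I do not expect any genuine obstacle here: the entire content lies in recognizing that conjugation is the restriction of a linear map (which disposes of $\Ad$ with no computation at all) and in a one-line application of the product and inverse-derivative rules (which disposes of $\ad$). The only points deserving a word of care are that tangent vectors to $G$ at $e$ are genuinely represented by matrices in $M_m(\CC)$, that a curve with prescribed velocity exists, and that the inverse-derivative formula holds; none of these is affected by working in the holomorphic rather than the smooth category, since all the maps involved are polynomial, or rational in the case of the inverse, in the matrix entries.
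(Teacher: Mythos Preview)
Your argument is correct and is the standard textbook verification: conjugation by $g$ is the restriction to $G$ of a linear endomorphism of $M_m(\CC)$, hence equals its own differential, and differentiating $g(t)\,y\,g(t)^{-1}$ at $t=0$ with the inverse-derivative rule gives the commutator. The paper itself offers no proof of this remark, treating it as a well-known fact, so there is nothing further to compare.
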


\smallskip

Consider now a holomorphic Riemannian metric $q$ on the complex Lie group $G$. The metric $q$ is said to be left-invariant
if all left translations are isometries, that is, if $(L_g)^\ast q = q$, for all
$g \in G$ or, more concretely, if
\begin{equation*}
q_{h}(x,y) = q_{gh}\lb (D_h L_g) x, (D_h L_g)y \rb ,
\end{equation*}
for all points $g,h$ of $G$ and all vectors $x,y$ of $T_hG$. Analogously, a metric $q$ is right-invariant if the right translations are isometries.

It is easy to see that there is a one-to-one correspondence between left-invariant holomorphic Riemannian metrics on the group $G$
and non-degenerate complex bilinear forms on the corresponding Lie algebra $\fg$.

Let us also recall the special case of bi-invariant metrics. A holomorphic Riemannian metric is said to be bi-invariant if it is both left-invariant and right-invariant. Notice that,
for a connected group $G$, this is equivalent to having $\ad_x$ be skew-symmetric with respect to $q$, that is,
\begin{equation}
q(\ad_x y,z)+q(y,\ad_x z) = 0 \label{eq: ad-invariant}
\end{equation}
for all $x,y,z \in \fg$.

The bilinear form $\kappa$ defined by
$\kappa(x,y) = \Tr(\ad_x \circ \ad_y)$
for every $x,y\in \fg$, where $\Tr$ stands for the trace of an endomorphism $\fg \longrightarrow \fg$, is called the Killing form of the Lie algebra $\fg$.

The Killing form always satisfies the ad-invariance condition of Equation (\ref{eq: ad-invariant}).  Also, a famous result of Cartan tells us that $G$ is a semisimple Lie group if and only if its Killing form is non-degenerate. Therefore, for complex semisimple Lie groups we have a preferred bi-invariant holomorphic Riemannian metric.


\smallskip

Consider now any Lie group (real or complex, not necessarily semisimple) with Lie algebra $\fg$ that can be equipped with a bi-invariant metric $B$. In the real setting, such groups are known as orthogonal, quadratic or quasi-classical Lie groups, \cite{MR}.  We will now show that the set of left-invariant metrics $q$  on $\fg$ is in one-to-one correspondence with $B$-self-adjoint isomorphisms $\Phi: \fg \longrightarrow \fg$ via the equality
\begin{equation*}\label{eq: Phi}
q(x,y) = B(\Phi x,y),
\end{equation*}
for every $x,y \in \fg$. Given a $B$-self-adjoint isomorphism $\Phi $, it is clear that the equation above defines a non-degenerate symmetric bilinear form (possibly of different signature than that of $B$, in the real setting) on $\fg$. Conversely, given a metric $q$ we can define the isomorphism $\Phi$ by the following commutative diagram

\[\begin{tikzcd}
\mathfrak{g} \arrow[r, "A_{q}"] \arrow[swap, d,"\Phi"] &
\mathfrak{g}^{\ast} \arrow[d, "\mathrm{id}"] \\
\mathfrak{g} \arrow[r,"A_{B}"] &
\mathfrak{g}^{\ast}  \end{tikzcd}
\]
where $A_q$ (resp. $A_B$) is the standard isomorphism between $\fg$ and $\fg^\ast$ given by $q$ (resp. $B$).



\smallskip

For a Lie group $G$ equipped with a left-invariant metric, the following lemma is an immediate consequence of the definitions but yet a noteworthy fact.

\begin{lemma}\label{lem: righ-invariant-Killing}
 If $X$ is a right-invariant vector field then $X$ is a Killing field.
\end{lemma}

\begin{proof}
Let $X$ be a right-invariant vector field. We start by observing that if $\varphi(t)$ is the one-parameter subgroup of $X_e \in \fg$ then the flow ${\Psi^t_X}$ of $X$ is given by ${\Psi^t_X}(g) = \varphi(t)g = L_{\varphi(t)}g$.  By definition of Lie derivative,
\begin{equation*}
\mathcal{L}_X q = \lim_{t\rightarrow 0} \frac{1}{t}\lb {(\Psi^t_X)^*}q - q \rb
\end{equation*}
But ${(\Psi^t_X)^*}q = L^\ast_{\varphi(t)} q$ from the above and  $L^\ast_{\varphi(t)} q = q$ since the metric is left-invariant. The claim then follows.
\end{proof}

\smallskip


\section{The Euler-Arnold formalism}\label{sectionEA}

In his celebrated article of 1966, \cite{Arnold-paper}, Arnold showed that the motions of a rigid body with fixed point can be seen as geodesics of a Lie group equipped with a left-invariant metric. This was a generalization of the result obtained by Euler for the particular case of rigid motions on $\mathbb{R}^3$. For this reason, the result of Theorem \ref{Thm: EA-equation} (Equation (\ref{eq: EA-equation}) below) is known as the Euler-Arnold equation for geodesics of Lie groups. For the proofs, we will follow the approach of N. Tholozan in this PhD thesis \cite{Tholozan}, which is very suitable to our complex setting.

\smallskip

Let $A$ be an open domain in $\mathbb{C}$ and $\gamma: A \longrightarrow G$ be an immersed holomorphic curve in $G$. Using left translations, we can define the associated curve $x: A \longrightarrow \fg$ in the Lie algebra $\fg$ of $G$, for every $t\in A$, as follows
\begin{equation*}
x(t) = D_{\gamma(t)}L_{\gamma^{-1}(t)}\dot{\gamma}(t).
\end{equation*}

\begin{teo}[Arnold, \cite{Arnold-paper, Arnold-book}]\label{Thm: EA-equation} Let $(G, q)$ be a holomorphic Riemannian Lie group. The curve $\gamma: A \longrightarrow G$ is a geodesic if and only if the associated curve $x: A \longrightarrow \fg$ satisfies, for every $t\in A$, the equation
\begin{equation}
\dot{x}(t) =  \mathrm{ad}_{x(t)}^\dagger x(t),\label{eq: EA-equation}
\end{equation}
where $\mathrm{ad}_{x(t)}^\dagger$denotes the formal adjoint of $\mathrm{ad}_{x(t)}$ with respect to $q$.
\end{teo}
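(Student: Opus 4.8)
The plan is to reduce the geodesic equation $\nabla_{\dot\gamma}\dot\gamma=0$ on $G$ to an ODE on the Lie algebra $\fg$ by exploiting the left-invariance of the metric, following Tholozan's variational/Killing-field approach, which transports verbatim to the holomorphic setting since all the objects involved (connection, metric, flows) are holomorphic and the computations are algebraic. The key idea is to test the geodesic equation against the right-invariant vector fields, which by Lemma~\ref{lem: righ-invariant-Killing} are Killing fields, and then to translate the resulting conservation laws back to $\fg$.

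First I would fix the holomorphic curve $\gamma:A\to G$ and its left-translated velocity $x(t)=D_{\gamma(t)}L_{\gamma^{-1}(t)}\dot\gamma(t)\in\fg$. For each $v\in\fg$ let $X^{(v)}$ denote the right-invariant vector field with $X^{(v)}_e=v$; by Lemma~\ref{lem: righ-invariant-Killing} each $X^{(v)}$ is Killing, so Lemma~\ref{lem: Killing-geodesic-constant} tells us that $\gamma$ is a geodesic precisely when $t\mapsto q\bigl(X^{(v)}_{\gamma(t)},\dot\gamma(t)\bigr)$ is constant for every $v\in\fg$ (one direction is Lemma~\ref{lem: Killing-geodesic-constant}; the converse follows because the $X^{(v)}$ span the tangent space at every point, so the $q$-dual to $\dot\gamma$ being detected by a frame of Killing fields pins down $\nabla_{\dot\gamma}\dot\gamma$). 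Next I would compute $q\bigl(X^{(v)}_{\gamma(t)},\dot\gamma(t)\bigr)$ by pulling everything back to $e$ via $L_{\gamma(t)^{-1}}$: since $q$ is left-invariant, this equals $q\bigl(D_{\gamma(t)}L_{\gamma(t)^{-1}}X^{(v)}_{\gamma(t)},\,x(t)\bigr)$, and the first slot is $\Ad_{\gamma(t)^{-1}}v$ because a right-invariant field left-translated back to $e$ is exactly the adjoint action. Hence the conserved quantity is $q\bigl(\Ad_{\gamma(t)^{-1}}v,\,x(t)\bigr)$ for all $v$.

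Differentiating $q\bigl(\Ad_{\gamma(t)^{-1}}v,\,x(t)\bigr)$ in $t$ and using $\tfrac{d}{dt}\Ad_{\gamma(t)^{-1}}v=-\ad_{x(t)}\Ad_{\gamma(t)^{-1}}v$ (the standard identity, valid holomorphically, for the left-logarithmic derivative $x(t)$), I get
\begin{equation*}
0=q\bigl(-\ad_{x(t)}\Ad_{\gamma(t)^{-1}}v,\,x(t)\bigr)+q\bigl(\Ad_{\gamma(t)^{-1}}v,\,\dot x(t)\bigr).
\end{equation*}
Writing $w=\Ad_{\gamma(t)^{-1}}v$, which ranges over all of $\fg$ as $v$ does since $\Ad_{\gamma(t)^{-1}}$ is invertible, and using the definition of the formal adjoint $q\bigl(\ad_{x}w,x\bigr)=q\bigl(w,\ad_{x}^{\dagger}x\bigr)$, the identity becomes $q\bigl(w,\dot x(t)-\ad_{x(t)}^{\dagger}x(t)\bigr)=0$ for all $w\in\fg$. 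Non-degeneracy of $q$ then gives $\dot x(t)=\ad_{x(t)}^{\dagger}x(t)$, which is~\eqref{eq: EA-equation}; running the argument in reverse (the conserved quantities characterize geodesics) gives the converse implication.

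The main obstacle, and the place I would spend the most care, is the converse direction at the level of Killing fields: arguing that a curve whose $q$-pairing with \emph{every} right-invariant (Killing) field is constant must be a geodesic. This needs the observation that at each point the right-invariant fields furnish a full frame, so testing $\nabla_{\dot\gamma}\dot\gamma$ against them and using that the ``derivative of a Killing pairing'' equals $q(\nabla_{\dot\gamma}\dot\gamma,\text{Killing field})$ plus the curvature-free term $g(\nabla_{\dot\gamma}X,\dot\gamma)$ which is skew and drops out; one must make sure no holomorphic subtlety (e.g. the domain $A\subseteq\mathbb{C}$ being two-real-dimensional) interferes, but since $\gamma$ is a holomorphic curve the covariant derivative along $\gamma$ is the genuine one-complex-variable derivative and the real-variable proof goes through unchanged. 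The remaining steps — the $\Ad$/$\ad$ differentiation identity and the bookkeeping with $\ad^{\dagger}$ — are routine.
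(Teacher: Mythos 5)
Your argument is correct, and the forward implication is essentially the paper's: test the geodesic against right-invariant fields, invoke Lemmas~\ref{lem: righ-invariant-Killing} and~\ref{lem: Killing-geodesic-constant}, rewrite the conserved quantity as $q(\Ad_{\gamma^{-1}(t)}v,x(t))$, differentiate using $\tfrac{d}{dt}\Ad_{\gamma^{-1}(t)}v=-\ad_{x(t)}\Ad_{\gamma^{-1}(t)}v$, and conclude via the formal adjoint, surjectivity of $\Ad_{\gamma^{-1}(t)}$ and non-degeneracy of $q$. Where you genuinely diverge is the converse and the framework: the paper restricts to matrix groups (invoking Ado's theorem) so that $x(t)=\gamma^{-1}(t)\dot{\gamma}(t)$ and the $\Ad$-derivative identity can be verified by explicit matrix manipulation, and it proves the converse indirectly, by taking the unique geodesic $\tilde{\gamma}$ with the same initial data, observing that its associated curve also solves~\eqref{eq: EA-equation}, and appealing to uniqueness of solutions of the Euler--Arnold ODE. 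You instead upgrade the conservation-law statement to a genuine equivalence: since the right-invariant fields frame every tangent space and, for a Killing field $X$, the term $q(\nabla_{\dot{\gamma}}X,\dot{\gamma})$ vanishes by skew-symmetry of $\nabla X$ (property (ii) of Killing fields in Section~\ref{Sec: holomorphic-Riem}), constancy of all the pairings forces $\nabla_{\dot{\gamma}}\dot{\gamma}=0$ by non-degeneracy, and the same differentiation computation then gives both directions at once. Your route buys a proof valid for arbitrary (complex) Lie groups without the Ado/matrix reduction and without the existence-uniqueness detour, at the price of needing the Killing-field skew-symmetry characterization and the intrinsic identity $\tfrac{d}{dt}\Ad_{\gamma^{-1}(t)}v=-\ad_{x(t)}\Ad_{\gamma^{-1}(t)}v$, which you assert as standard but should be proved (it is exactly the computation the paper carries out in matrix form); the paper's route keeps all computations elementary matrix algebra and gets the converse almost for free once the forward direction is done.
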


\begin{proof}
For simplicity, we will present the proof for matrix groups only. We recall that, by Ado's theorem, every Lie algebra is isomorphic to the Lie algebra of a matrix group with the usual commutator, so we trust that the reader will not find our proof too restrictive.

If $G$ is a matrix group and $\gamma: A \longrightarrow G$ is a curve in $G$ then its associated curve $x:A \longrightarrow \fg$ is simply given by the expression $x(t) = \gamma^{-1}(t) \dot{\gamma}(t).$

Suppose that $\gamma: A \longrightarrow G$ is a geodesic. Let $z$ be any element in the Lie algebra $\fg$. By Lemma \ref{lem: righ-invariant-Killing}, the right-invariant vector field $Z_{\gamma(t)} := z \gamma(t)$ is a Killing field along the curve $\gamma$. Also, according to Lemma \ref{lem: Killing-geodesic-constant}, $q(Z_{\gamma(t)}, \dot{\gamma}(t))$ is constant. Since the metric $q$ is left-invariant, then
\begin{equation*}
c = q\lb Z_{\gamma(t)}, \dot{\gamma}(t) \rb
= q \lb \gamma^{-1}(t) Z_{\gamma(t)}, \gamma^{-1}(t) \dot{\gamma}(t)\rb
= q\lb \Ad_{\gamma^{-1}(t)}z ,  x(t)\rb
\end{equation*}
for some $c\in \CC$ and for all $t\in A$. Taking the derivative with respect to $t$, we have that
\begin{equation*}
0 = \frac{d}{dt}q\lb  \Ad_{\gamma^{-1}(t)}z ,  x(t)\rb
= q\lb \dot{x}(t), \Ad_{\gamma^{-1}(t)}z\rb
+ q\lb x(t), \frac{d}{dt} \Ad_{\gamma^{-1}(t)}z\rb .
\end{equation*}
From the equation above, by taking derivatives and noting that
$
\frac{d}{dt}\gamma^{-1}(t) = - \gamma^{-1}(t) \dot{\gamma}(t) \gamma^{-1}(t)$ and thus
$
\frac{d}{dt}\Ad_{\gamma^{-1}(t)}z = \gamma^{-1}(t)z\dot{\gamma}(t) - \gamma^{-1}(t)\dot{\gamma}(t)\gamma^{-1}(t)z \gamma(t) ,
$
we get
\begin{equation*}
q\lb \dot{x}(t), \Ad_{\gamma^{-1}(t)} (z)\rb  =  - q\lb x(t) , \gamma^{-1}(t)z\dot{\gamma}(t) - \gamma^{-1}(t)\dot{\gamma}(t)\gamma^{-1}(t)z \gamma(t)\rb.
\end{equation*}
Now using the definitions of $\Ad$ and of $x(t)$ we can rearrange our equation to
\begin{equation*}
q\lb\dot{x}(t), \Ad_{\gamma^{-1}(t)} z\rb
= - q\lb x(t), \lb \Ad_{\gamma^{-1}(t)}z\rb x(t) - x(t)\lb \Ad_{\gamma^{-1}(t)}z\rb \rb
\end{equation*}
Recalling that for matrix groups the adjoint map $\ad$ is given by the commutator, we obtain
\begin{equation*}
q\lb \dot{x}(t), \Ad_{\gamma^{-1}(t)} z\rb
 =  q\lb x(t), \ad_{x(t)}{\Ad_{\gamma^{-1}(t)} z} \rb
\end{equation*}
Finally by taking the formal adjoint, we can write
\begin{equation*}
q\lb \dot{x}(t), \Ad_{\gamma^{-1}(t)} z\rb
= q\lb \ad^\dagger_{x(t)}x(t), \Ad_{\gamma^{-1}(t)}z \rb .
\end{equation*}

Now using the fact that $z$ is a generic element in $\fg$ and $\Ad_g$ is surjective for all $g\in G$ and also that $q$ is non-degenerate, we can conclude that $\dot{x}(t) = \ad^\dagger_{x(t)} x(t)$.

Conversely, suppose that $x(t)$ is an integral curve of  the Euler-Arnold equation and take $v=x(0)$. There exists a unique geodesic $\gamma$ in $G$ such that $\gamma(0)=e$ and $\dot{\gamma}(0) =v$. Then, by the statement proved above, $y(t) = \gamma^{-1}(t)\gamma(t)$ is also an integral curve of the Euler-Arnold equation. Since $y(0) = \gamma^{-1}(0)\dot{\gamma}(0) = v$, by uniqueness, we conclude that $x(t)=y(t)$, in an open neighborhood of $0\in\CC$. Thus $x$ is an associated curve in $\fg$ of a geodesic of $G$.
\end{proof}

We remark that the theorem above, which we will call, as usual in the literature, the Euler-Arnold theorem, is valid for any Lie group equipped with a left-invariant metric. For Lie groups which can be equipped with a bi-invariant metric,  we have the following simplification of the Euler-Arnold equation.

\begin{proposition}
Let $G$ be an orthogonal Lie group equipped with a bi-invariant metric $B$. Let $q$ be a left-invariant metric on $G$ and $\Phi$ be the B-self-adjoint isomorphism defined by
$q(y,z) = B(\Phi y, z),$
for all $y,z\in \fg$. A curve $\gamma: A \longrightarrow G$ is a geodesic in $G$ if and only if its associated curve $x: A \longrightarrow \fg $ in $\fg$ is an integral curve of
$\Phi\dot{x}(t)= [\Phi x(t), x(t)] .$
\end{proposition}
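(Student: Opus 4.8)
The plan is to derive the stated equation directly from the Euler--Arnold equation of Theorem~\ref{Thm: EA-equation}, namely $\dot x(t) = \ad_{x(t)}^\dagger x(t)$, by rewriting the $q$-formal adjoint $\ad_x^\dagger$ in terms of $\Phi$ and the $B$-adjoint of $\ad_x$. The only structural input needed is that $B$, being bi-invariant, is $\ad$-invariant: Equation~\eqref{eq: ad-invariant} says $B(\ad_x u, v) + B(u, \ad_x v) = 0$ for all $x,u,v \in \fg$, i.e.\ each $\ad_x$ is $B$-skew-symmetric.

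First I would compute $\ad_x^\dagger$ with respect to $q$. Fix $x \in \fg$ and take arbitrary $y,z \in \fg$. Then, using the definition of $q$ and the $B$-skew-symmetry of $\ad_x$,
$$q(\ad_x^\dagger y, z) = q(y,\ad_x z) = B(\Phi y, \ad_x z) = -\,B(\ad_x \Phi y, z),$$
while on the other hand $q(\ad_x^\dagger y, z) = B(\Phi\, \ad_x^\dagger y, z)$. Since $z$ is arbitrary and $B$ is non-degenerate, we get $\Phi\, \ad_x^\dagger y = -\ad_x \Phi y$ for every $y$; as $\Phi$ is an isomorphism, this yields the operator identity $\ad_x^\dagger = -\,\Phi^{-1}\circ \ad_x \circ \Phi$.

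Substituting into the Euler--Arnold equation --- and noting that this is an identity holding pointwise in $t$, with $x = x(t)$ and $\Phi$ constant, so that no derivative of $\Phi$ appears --- gives
$$\dot x = \ad_x^\dagger x = -\,\Phi^{-1}\ad_x(\Phi x) = -\,\Phi^{-1}[x,\Phi x] = \Phi^{-1}[\Phi x, x],$$
hence $\Phi\dot x = [\Phi x, x]$. Every step above is reversible, since $\Phi$ is invertible and $B$ is non-degenerate, so $x$ is an integral curve of the Euler--Arnold equation if and only if it is an integral curve of $\Phi\dot x = [\Phi x, x]$; combining this with Theorem~\ref{Thm: EA-equation} proves the proposition. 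There is no serious obstacle here: the one point requiring care is keeping track of which adjoint is taken with respect to which bilinear form --- the dagger denotes the $q$-adjoint, whereas the skew-symmetry is with respect to $B$ --- and remembering that $\Phi$ being a $B$-self-adjoint \emph{isomorphism} is precisely what allows $\Phi^{-1}$ to be moved across the bracket.
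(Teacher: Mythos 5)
Your proof is correct and follows essentially the same route as the paper's: both derive the identity from the Euler--Arnold equation by rewriting the $q$-adjoint via $\Phi$, using the $B$-skew-symmetry of $\ad_x$ (bi-invariance) and the non-degeneracy of $B$; you merely package the computation as the operator identity $\ad_x^\dagger = -\Phi^{-1}\circ\ad_x\circ\Phi$ instead of pairing against an arbitrary test vector $z$ as the paper does. Your explicit remark that each step is reversible also cleanly covers the ``if and only if'', which the paper leaves implicit.
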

\begin{proof}
 Let $z$ be any element in $\fg$. Since $x(t)$ satisfies the Euler-Arnold equation, we have that $q(\dot{x}(t), z) = q(\ad^\dagger_{x(t)} x(t),z)$. It thus follows that $B(\Phi \dot{x}(t),z) = B(\Phi x(t), \ad_{x(t)} z).$ Since $B$ is bi-invariant we can write that
$B(\Phi \dot{x}(t), z) = B([\Phi x(t),x(t)],z)$ and hence the claim follows.
\end{proof}

Recall that a first integral of a complex differential equation is a holomorphic function which is constant
along its solutions. The Euler-Arnold equation comes with the following first integrals.

\begin{proposition}\label{Prop: FI-EA}
The functions $I(x) =B(\Phi x,x)$ and $J(x) = B(\Phi x, \Phi x)$ are first integrals of the Euler-Arnold equation $\Phi\dot{x}(t) = [\Phi x(t), x(t)]$.
\end{proposition}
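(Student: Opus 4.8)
The plan is to show that both $I$ and $J$ have vanishing derivative along any integral curve $x(t)$ of the Euler-Arnold equation $\Phi\dot{x}(t) = [\Phi x(t), x(t)]$, using the fact that $\Phi$ is $B$-self-adjoint and $B$ is $\mathrm{ad}$-invariant (bi-invariance, Equation~(\ref{eq: ad-invariant})). These are the only two facts I expect to need, so the computation should be short.

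First I would differentiate $I(x(t)) = B(\Phi x(t), x(t))$. Using bilinearity and symmetry of $B$ together with the self-adjointness of $\Phi$, one gets $\frac{d}{dt} B(\Phi x, x) = B(\Phi \dot{x}, x) + B(\Phi x, \dot{x}) = 2 B(\Phi \dot{x}, x)$. Substituting the Euler-Arnold equation $\Phi \dot{x} = [\Phi x, x]$ yields $\frac{d}{dt} I(x(t)) = 2 B([\Phi x, x], x)$, and the $\mathrm{ad}$-invariance of $B$ (equivalently $B([a,b],c) = -B(b,[a,c])$, so $B([a,b],b)=0$) forces $B([\Phi x, x], x) = 0$. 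Hence $I$ is constant along solutions.

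Next I would differentiate $J(x(t)) = B(\Phi x(t), \Phi x(t))$. Since $B$ is symmetric this gives $\frac{d}{dt} J(x(t)) = 2 B(\Phi \dot{x}, \Phi x) = 2 B([\Phi x, x], \Phi x)$ after substituting the equation. Again by $\mathrm{ad}$-invariance of $B$, the expression $B([\Phi x, x], \Phi x)$ is of the form $B([a,b],a)$ with $a = \Phi x$, $b = x$, and this vanishes because $B([a,b],a) = -B(b,[a,a]) = 0$. Therefore $J$ is also constant along solutions. Since $I$ and $J$ are manifestly holomorphic (in fact polynomial, being quadratic in $x$), they are first integrals of the Euler-Arnold equation.

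I do not anticipate a genuine obstacle here; the only point requiring a little care is bookkeeping the sign conventions in the $\mathrm{ad}$-invariance identity and making sure the symmetry of $B$ is invoked correctly when moving $\Phi$ across the pairing. If one prefers to work with the unsimplified Euler-Arnold equation $\dot{x} = \mathrm{ad}_x^\dagger x$ instead (for a general left-invariant metric that is not assumed bi-invariant), the same conclusion for $I$ would follow by writing $I(x) = q(x,x)$ and noting $\frac{d}{dt} q(x,x) = 2 q(\mathrm{ad}_x^\dagger x, x) = 2 q(x, \mathrm{ad}_x x) = 2 q(x,[x,x]) = 0$; but since the proposition is stated for the orthogonal case I would just use the $B$-version throughout.
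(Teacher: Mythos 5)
Your proof is correct and follows essentially the same route as the paper: differentiate $I$ and $J$ along solutions, use the $B$-self-adjointness of $\Phi$ to collect terms, substitute $\Phi\dot{x}=[\Phi x,x]$, and kill the resulting terms by the $\mathrm{ad}$-invariance of $B$. The only difference is that you write out the computation for $J$ explicitly, whereas the paper dismisses it as analogous.
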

\begin{proof}
It suffices to prove that $\frac{d}{dt} I (x(t)) = 0$ and $\frac{d}{dt} J (x(t)) = 0$. We have that
\begin{equation*}
\frac{d}{dt} I(x(t)) = \frac{d}{dt} B(\Phi x(t),x(t)) = B(\Phi\dot{x}(t), x(t))+ B(\Phi x(t), \dot{x}(t)).
\end{equation*}
Using the fact that $\Phi$ is $B$-self-adjoint and also the Euler-Arnold equation, we get that
\begin{equation*}
\frac{d}{dt} I(x(t)) =  2 B(\Phi\dot{x}(t), x(t)) = 2 B([\Phi x(t), x(t)], x(t))
\end{equation*}
and since $B$ is bi-invariant then $\frac{d}{dt}I(x(t))=0$. The result is analogous for $J(x(t))$.
\end{proof}

The following result is known as the Lax-pair formulation of the Euler-Arnold equations.

\begin{coro}\label{cor: lax-pair}
Let $G$ be an orthogonal Lie group as formulated above. The geodesics of $G$ are in one-to-one correspondence with
curves in $\fg$ which satisfy the equation
\begin{equation*}
\dot{z}(t) = [z(t), \Phi^{-1} z(t)] \, .
\end{equation*}
Furthermore, the functions $F (z) = B(z, z)$ and $G (z) = B(z,\Phi^{-1} z)$ are first integrals of the
Lax-pair equation $\dot{z}(t) = [z(t), \Phi^{-1} z(t)]$.
\end{coro}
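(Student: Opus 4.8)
The plan is to deduce the corollary from the preceding proposition by the invertible linear change of variable $z = \Phi x$. First I would recall that, by that proposition, a curve $\gamma$ is a geodesic in $G$ if and only if its associated curve $x \colon A \longrightarrow \fg$ satisfies $\Phi \dot{x}(t) = [\Phi x(t), x(t)]$. Setting $z(t) := \Phi x(t)$ and using that $\Phi$ is $\CC$-linear (so $\Phi\dot{x} = \dot{z}$) and an isomorphism (so $x = \Phi^{-1}z$), this equation becomes precisely $\dot{z}(t) = [z(t), \Phi^{-1}z(t)]$. Since $\Phi \colon \fg \longrightarrow \fg$ is a linear bijection, $x \mapsto \Phi x$ is a bijection between solution curves of the Euler-Arnold equation and solution curves of the Lax-pair equation (matching initial conditions up to $\Phi$); composing with the correspondence (geodesics) $\longleftrightarrow$ (solutions of the Euler-Arnold equation) furnished by Theorem~\ref{Thm: EA-equation} and the proposition yields the asserted one-to-one correspondence.

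For the first integrals I would give two short arguments. The quickest: along a solution $z(t) = \Phi x(t)$ one has $F(z(t)) = B(\Phi x(t), \Phi x(t)) = J(x(t))$ and $G(z(t)) = B(\Phi x(t), \Phi^{-1}\Phi x(t)) = B(\Phi x(t), x(t)) = I(x(t))$, which are constant by Proposition~\ref{Prop: FI-EA}. Alternatively, a self-contained computation: $\frac{d}{dt}B(z,z) = 2B(\dot{z},z) = 2B([z,\Phi^{-1}z],z) = 0$ by the ad-invariance \eqref{eq: ad-invariant} of $B$; and, using that $\Phi^{-1}$ is again $B$-self-adjoint, $\frac{d}{dt}B(z,\Phi^{-1}z) = B(\dot{z},\Phi^{-1}z) + B(z,\Phi^{-1}\dot{z}) = 2B(\dot{z},\Phi^{-1}z) = 2B([z,\Phi^{-1}z],\Phi^{-1}z) = 0$, again by bi-invariance of $B$.

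There is essentially no hard step: the corollary is a transcription of the preceding proposition and of Proposition~\ref{Prop: FI-EA} under an invertible linear change of coordinates. The only points meriting an explicit line are that $\Phi^{-1}$ inherits $B$-self-adjointness from $\Phi$ (immediate from $B(\Phi^{-1}y,z) = B(\Phi^{-1}y,\Phi\Phi^{-1}z) = B(\Phi\Phi^{-1}y,\Phi^{-1}z) = B(y,\Phi^{-1}z)$), and that the stated correspondence is with geodesics through an arbitrary point of $G$; the latter is already built into Theorem~\ref{Thm: EA-equation} via left-translation, so nothing further is required.
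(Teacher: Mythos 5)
Your proposal is correct and follows essentially the same route as the paper, which simply notes that the corollary is immediate from the two preceding propositions via the substitution $z=\Phi x$; your extra remarks (the direct computation of the first integrals and the $B$-self-adjointness of $\Phi^{-1}$) are fine but not needed beyond what the paper's one-line argument already covers.
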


\begin{proof}
This is immediate from the two previous propositions by taking $z=\Phi x$.
\end{proof}

\smallskip

As can be seen from the discussion above, for orthogonal Lie groups with a pseudo or holomorphic Riemannian metric, the geodesic equation, which is for general manifolds an ODE of order 2, becomes an ODE of order 1 (or, equivalently, a vector field) in Euclidean space which comes with the added feature of having two first integrals.  This allows us to prove following result in the real setting.

\begin{teo}\label{teo:complete-PR-metrics}
Let $G$ be a Lie group equipped with a bi-invariant pseudo-Riemannian metric $B$.  Consider the set of left-invariant metrics for which the isomorphism $\Phi$ is diagonalizable. Then, for every possible signature, there is an open set of eigenvalues of $\Phi$ which corresponds to a set of complete left-invariant pseudo-Riemannian metrics.
\end{teo}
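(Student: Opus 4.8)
The plan is to work entirely on the Lie algebra, through the Euler--Arnold equation $\Phi\dot x(t)=[\Phi x(t),x(t)]$ on $\fg$ and its two first integrals $I(x)=B(\Phi x,x)=q(x,x)$ and $J(x)=B(\Phi x,\Phi x)$ furnished by Proposition~\ref{Prop: FI-EA}. The key point is the following: if, for an appropriate choice of the eigenvalues of $\Phi$, some real linear combination $aI+bJ$ happens to be a \emph{positive definite} quadratic form on $\fg$, then every integral curve $x(t)$ of the Euler--Arnold vector field is trapped inside a level set $\{aI+bJ=c\}$, which is compact; since a solution of a polynomial vector field on the vector space $\fg$ that stays in a compact set extends to all of $\R$, the Euler--Arnold field is complete. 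By Theorem~\ref{Thm: EA-equation} the geodesics of $(G,q)$ correspond to the integral curves of this field, and a global integral curve $x(t)$ yields a global geodesic through $e$ (integrate the linear, time-dependent equation $\dot\gamma=\gamma\,x(t)$, which has no finite-time blow-up), while every geodesic is a left translate of one through $e$; hence $q$ is geodesically complete.

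So everything reduces to exhibiting, for each prescribed signature, a nonempty open set of eigenvalue tuples for which $\Phi$ is diagonalizable and $B$-self-adjoint, $q$ has that signature, and a positive definite combination of $I$ and $J$ exists. I would start from a $B$-orthonormal basis $(f_1,\dots,f_n)$ with $B(f_i,f_i)=\varepsilon_i$, where $\varepsilon_i=+1$ for $i\le p$ and $\varepsilon_i=-1$ for $i>p$, $(p,n-p)$ being the signature of $B$, and take $\Phi$ diagonal in this basis, $\Phi f_i=\lambda_i f_i$ with $\lambda_i\neq 0$. Such a $\Phi$ is automatically $B$-self-adjoint and diagonalizable; the metric $q$ is diagonal in the same basis, $q(f_i,f_i)=\lambda_i\varepsilon_i$, so its signature is controlled by the \emph{signs} of the $\lambda_i$; and, writing $x=\sum_i x_i f_i$, one computes $aI(x)+bJ(x)=\sum_i \varepsilon_i\lambda_i(a+b\lambda_i)\,x_i^2$, which is positive definite exactly when $\varepsilon_i\lambda_i(a+b\lambda_i)>0$ for every $i$, i.e.\ when each $\lambda_i$ lies in the region where the parabola $h(\lambda)=\lambda(a+b\lambda)$ takes the sign $\varepsilon_i$.

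Concretely, I would take $a=-1$ and $b=1$, so $h(\lambda)=\lambda(\lambda-1)$ is positive on $(-\infty,0)\cup(1,\infty)$ and negative on $(0,1)$. Suppose first the target signature is $(r,n-r)$ with $r\le p$. Then choosing $\lambda_1,\dots,\lambda_r\in(1,\infty)$, $\lambda_{r+1},\dots,\lambda_p\in(-\infty,0)$ and $\lambda_{p+1},\dots,\lambda_n\in(0,1)$ makes $q(f_i,f_i)=\lambda_i\varepsilon_i$ positive exactly for $i\le r$ — so $q$ has signature $(r,n-r)$ — while $\varepsilon_i h(\lambda_i)>0$ for all $i$, so $J-I$ is positive definite; these are open conditions on $(\lambda_1,\dots,\lambda_n)$, yielding the required open set. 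The remaining signatures follow from this case by the two innocuous substitutions: replacing $B$ by $-B$ (which keeps the metric bi-invariant and merely exchanges $p$ with $n-p$) and replacing $q$ by $-q$ (which leaves the geodesics, hence geodesic completeness, untouched and turns the target signature $(r,n-r)$ into $(n-r,r)$). Since $r+(n-r)=n=p+(n-p)$, at least one of $r\le p$, $r\le n-p$, $n-r\le p$, $n-r\le n-p$ holds, and in each case one of these substitutions reduces to the situation already handled.

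The only genuine difficulty is the bookkeeping of the last two paragraphs: one must verify that the two competing requirements — realizing the prescribed signature of $q$, which pins down the \emph{signs} of the $\lambda_i$, and making a \emph{fixed} combination of $I$ and $J$ definite, which confines each $\lambda_i$ to a prescribed \emph{interval} — can always be met simultaneously, and it is exactly the freedom to adjust the magnitudes of the eigenvalues independently, together with the $B\leftrightarrow -B$ and $q\leftrightarrow -q$ symmetries, that makes this possible. Everything else (the identification of the first integrals, compactness of a level set of a positive definite form, the escape lemma for polynomial vector fields, and the translation back to geodesic completeness) is standard and already in place from the preceding sections.
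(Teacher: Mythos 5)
Your proposal is correct and follows essentially the same route as the paper: both arguments diagonalize $\Phi$ in a $B$-orthonormal basis, choose the eigenvalues in open intervals so that a fixed linear combination of the two quadratic first integrals (your $J-I$, the paper's $I_2-I_1$ for the Lax-pair system $\dot z=[z,\Phi^{-1}z]$) is positive definite while realizing the prescribed signature, and conclude that the orbits stay in a compact set, hence the metric is complete, with the remaining signatures handled by the $q\mapsto -q$ (and in your case also $B\mapsto -B$) symmetry. The only differences are cosmetic — you work with the Euler--Arnold variables and the eigenvalues of $\Phi$ rather than the Lax variables and the eigenvalues of $\Phi^{-1}$, and you spell out the passage from completeness of the Euler--Arnold field back to geodesic completeness, which the paper leaves implicit.
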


\begin{proof}
Suppose that $B$ has signature $(p,q)$ with $p+q=n$, where $n=\mathrm{dim}\, G$.  Fix, for the remainder of the proof, an orthogonal basis $(e_1, \cdots, e_p, f_1,\cdots,f_q)$ such that $B(e_i,e_i) =1=-B(f_j,f_j)$. Consider a pseudo-Riemannian metric $q$ such that $q(x,y) = B(\Phi x, y)$ where $\Phi$ is a linear isomorphism which is represented by a diagonal matrix with respect to our fixed basis.  Supposing that $\Phi^{-1}=\mathrm{diag}(\nu_1,\cdots,\nu_p, \mu_1, \cdots, \mu_q)$ then $q$ is represented by $\mathrm{diag}(1/\nu_1,\cdots,1/\nu_p, -1/\mu_1, \cdots, -1/\mu_q)$.   Notice that the signature of $q$ is determined by the signs of $\nu_i,\mu_j$.

If $(z_1,\cdots,z_p, w_1,\cdots, w_q)$ stands for the chosen coordinates of $z\in\frak g$ then the first integrals of Corollary \ref{cor: lax-pair} are given by
\begin{align*}
I_1(z) = & \, z_1^2+\cdots +z_p^2 - w_1^2-\cdots -w_q^2 \\
I_2(z) = & \, \nu_1 z_1^2+\cdots+ \nu_p z_p^2 -\mu_1 w_1^2-\cdots\ -\mu_q w_q^2  \, ,
\end{align*}
and they are linearly independent for a generic metric.

Suppose that $q$ has signature $(p+r, q-r)$ where $0\leq r \leq q$.  Such a signature can be obtained by taking, for instance,
$$\nu_1,\cdots,\nu_p >1, \quad \mu_1,\cdots, \mu_r <0 \quad \mbox{and} \quad 0< \mu_{r+1},\cdots,\mu_{q}<1.$$
Consider then the new first integral for the Lax-pair system given by $J=I_2-I_1$. More precisely,
$$J(z) = (\nu_1-1) z_1^2 + \cdots + (\nu_p-1)z_p^2+(1-\mu_1) w_1^2 + \cdots +(1-\mu_p)w_p^2$$
which, with our choices, is clearly a quadratic positive definite first integral. This implies that the integral curves are all contained in a compact part of $\mathbb{R}^n$ and the associate geodesics are, therefore, complete curves.

For metrics with signature $(p-r,q+r)$ where $0\leq r \leq p$ the proof follows by an analogous argument or, simply, by symmetry by replacing the metric $q$ with $-q$.
\end{proof}

Notice that, in particular, this proves Theorem \ref{Teo: 1.1} presented in the Introduction section.

\section{Lax-pair equations on $\mathrm{SL}(2,\mathbb{R})$}\label{sec: LP_equations}

Let us consider the semisimple (in fact, simple) Lie group $G=\SLR$ and its corresponding Lie algebra $\slR$ equipped with a left-invariant metric $q$. Let $\Phi: \slR \longrightarrow\slR$ be the (unique) $B$-self-adjoint isomorphism satisfying $q(x,y) = B(\Phi x,y)$, for every $x,y \in
\slR$, where $B$ stands for the Killing form of $\slR$.

Let us mention that, in this section,  a particular normalization for the Killing form will be taken. More precisely, we let
$B(x,y)= \frac{1}{2} \Tr(\ad_x\circ \ad_y)$.  Since,  up to a constant, there can only be one bi-invariant bilinear form on a simple Lie algebra, we can conclude that $B(x,y) = 2\Tr(xy)$. Also, it can easily be checked that $B$ has signature $(2,1)$.

\begin{definition}\label{def: B-ON_B-PON}
Let $v=(v_k)$ be a basis of $\slR$. We say that $v=(v_k)$ is $B$-orthonormal if, up to reordering its elements, we have
\begin{equation}\label{def: B-ON}
B(v_1,v_1) = B(v_2,v_2) = -B(v_3,v_3)=1
\;\;\;\; \text{ and } \;\;\;\;
B(v_k,v_l)=0
\;\; , \;\;
\forall  k\neq l .
\end{equation}
In turn, we say that $v=(v_k)$ is $B$-pseudo-orthonormal if, up to reordering its elements, we have
\begin{equation}\label{def: B-PON}
B(v_1,v_1) = B(v_2,v_3) = 1
\;\;\;\; \text{ and } \;\;\;\;
B(v_1,v_k) = B(v_k,v_k)=0
\;\;  \text{ for }  \;\;
k = 2,3 \, .
\end{equation}
\end{definition}

The next result, relating $B$-(pseudo-)orthonormality and bracket relations,  is a well-known but remarkable property of $\slR$, which is essentially due to the fact that it is a 3-dimensional Lie algebra. More precisely, we have the following.

\begin{lema}\label{lm:A}
Let $v=(v_k)$ be a basis of $\slR$. We have that $v=(v_k)$ is a $B$-orthonormal basis satisfying Condition~(\ref{def: B-ON})
if and only if
\begin{equation}\label{eq:Lrel1sl2R}
[v_1,v_2] = \delta v_3
\;\;\;\; , \;\;\;\;
[v_1,v_3] = \delta v_2
\;\;\;\; \text{ and } \;\;\;\;
[v_2,v_3] = -\delta v_1 \, ,
\end{equation}
for some $\delta \in \{-1,1\}$. Analogously, $v=(v_k)$ is a $B$-pseudo-orthonormal basis satisfying condition~(\ref{def: B-PON})
if and only if
\begin{equation}\label{eq:Lrel2sl2R}
[v_1,v_2] = \delta v_2
\;\;\;\; , \;\;\;\;
[v_1,v_3] = -\delta v_3
\;\;\;\; \text{ and } \;\;\;\;
[v_2,v_3] = \delta v_1 \, ,
\end{equation}
again for some $\delta \in \{-1,1\}$.
\end{lema}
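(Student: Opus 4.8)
\textbf{Proof strategy for Lemma~\ref{lm:A}.}
The plan is to prove each of the two equivalences by a direct computation, exploiting the fact that $\slR$ is three-dimensional so that the bracket is completely determined by a small number of structure constants, and that the Killing form $B$ satisfies the ad-invariance relation~(\ref{eq: ad-invariant}). The key algebraic input, which I would establish first as a preliminary observation, is the following: for any $x,y\in\slR$ the vector $[x,y]$ is $B$-orthogonal to both $x$ and $y$. Indeed, by ad-invariance $B([x,y],x) = -B(y,[x,x]) = 0$ and similarly $B([x,y],y)=0$. In a three-dimensional space this pins down the \emph{direction} of each bracket $[v_i,v_j]$ to be (a multiple of) the unique vector $B$-orthogonal to the plane $\Span(v_i,v_j)$, which in both the $B$-orthonormal and the $B$-pseudo-orthonormal case is forced to be a multiple of the remaining basis vector. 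So in all cases we may write $[v_1,v_2]=a v_3$, $[v_1,v_3]=b v_2$, $[v_2,v_3]=c v_1$ for scalars $a,b,c$, and the whole problem reduces to determining the relations among $a,b,c$.

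For the first equivalence, assume $v$ is $B$-orthonormal as in~(\ref{def: B-ON}). Applying ad-invariance to the triple $(v_1,v_2,v_3)$ gives $B([v_1,v_2],v_3) + B(v_2,[v_1,v_3]) = 0$, i.e. $-a B(v_3,v_3) + b B(v_2,v_2) = 0$, hence $a + b = 0$ using the sign conventions $B(v_2,v_2)=1=-B(v_3,v_3)$; wait, one must be careful with the signs here, and the cleaner route is to apply ad-invariance to $(v_2,v_1,v_3)$, $(v_1,v_2,v_3)$ and $(v_3,v_1,v_2)$ in turn, each time extracting one scalar identity. This yields relations of the form $a=\pm b$, $b=\pm c$, etc., which together force $b=a$ and $c=-a$ (matching the shape of~(\ref{eq:Lrel1sl2R})). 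To see that $a\in\{-1,1\}$ rather than an arbitrary nonzero scalar, I would use the condition that $B$ is not merely ad-invariant but is \emph{the} Killing form (up to the fixed normalization $B(x,y)=\frac12\Tr(\ad_x\ad_y)$): compute $\ad_{v_1}$ in the basis $(v_1,v_2,v_3)$ using~(\ref{eq:Lrel1sl2R}), so that $\ad_{v_1}v_2 = a v_3$ and $\ad_{v_1}v_3 = a v_2$, giving $\ad_{v_1}^2 v_2 = a^2 v_2$ and $\ad_{v_1}^2 v_3 = a^2 v_3$; then $\frac12\Tr(\ad_{v_1}^2) = a^2$, which must equal $B(v_1,v_1)=1$, so $a^2=1$. (One should double-check that the sign and the factor $\frac12$ work out; the normalization was chosen precisely so that it does.) Conversely, if~(\ref{eq:Lrel1sl2R}) holds for some $\delta\in\{-1,1\}$, then the same trace computation gives $B(v_k,v_k)=\pm1$ with the correct signs and $B(v_k,v_l)=0$ for $k\ne l$ (the latter because $\ad_{v_k}\ad_{v_l}$ is easily seen to be traceless off the diagonal), recovering~(\ref{def: B-ON}).

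The second equivalence is handled identically, with the bookkeeping adapted to the pseudo-orthonormal pairing: now $v_1$ is orthogonal to $v_2$ and $v_3$, each of $v_2,v_3$ is $B$-isotropic, and $B(v_2,v_3)=1$. The orthogonality-of-brackets observation still applies, but now the vector orthogonal to $\Span(v_1,v_2)$ is a multiple of $v_2$ itself (since $v_2$ is isotropic and orthogonal to $v_1$), so one writes $[v_1,v_2]=a v_2$, $[v_1,v_3]=b v_3$, $[v_2,v_3]=c v_1$; ad-invariance on $(v_1,v_2,v_3)$ forces $a+b=0$, and the normalization-of-$B$ argument (computing $\frac12\Tr(\ad_{v_1}^2)$, which is $a^2 + b^2 = 2a^2$... here again one must check the factor, since $B(v_1,v_1)=0$ in the pseudo case, so the relevant trace identity will instead come from pairing $\ad_{v_2}\ad_{v_3}$ and using $B(v_2,v_3)=1$) pins down $a\in\{-1,1\}$ and $c=a$, giving~(\ref{eq:Lrel2sl2R}); the converse is again a direct trace computation.

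\textbf{Main obstacle.} The conceptual content is light --- everything follows from ad-invariance plus three-dimensionality --- so the real work is purely in the sign bookkeeping: there are several places where a stray minus sign in the normalization $B(x,y)=\frac12\Tr(\ad_x\ad_y)$ versus $B(x,y)=2\Tr(xy)$, or in the orientation of the bracket, could flip $\delta$ or swap which basis vector appears on the right-hand side. I expect the hard (or at least error-prone) part to be verifying that the trace normalization forces exactly $\delta^2=1$ with the signs arranged as written in~(\ref{eq:Lrel1sl2R}) and~(\ref{eq:Lrel2sl2R}), rather than some other consistent sign pattern; doing this cleanly probably benefits from exhibiting one explicit model basis (e.g. a standard basis of $\slR$) to fix the overall sign conventions once and for all, and then arguing that any $B$-orthonormal (resp. $B$-pseudo-orthonormal) basis is related to the model one by an element of the relevant structure group, under which the bracket relations transform in the claimed way.
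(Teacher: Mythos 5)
Your proposal is correct, but it follows a genuinely different route from the paper in the hard direction. You argue entirely at the level of structure constants: ad-invariance gives $B([x,y],x)=B([x,y],y)=0$, which in dimension $3$ forces $[v_1,v_2]=a v_3$, $[v_1,v_3]=b v_2$, $[v_2,v_3]=c v_1$ (resp. $a v_2$, $b v_3$, $c v_1$ in the pseudo-orthonormal case); further ad-invariance identities give $b=a$, $c=-a$ (resp. $b=-a$, $c=a$), and the normalization $B=\frac12\Tr(\ad\,\cdot\,\ad\,\cdot)$ then forces $a^2=1$, since the Killing form of the resulting bracket table must reproduce the prescribed values of $B$ on the basis. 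This all checks out (e.g.\ $\frac12\Tr(\ad_{v_1}^2)=a^2=B(v_1,v_1)$ in both cases). The paper instead proves ``orthonormal $\Rightarrow$ bracket relations'' by a symmetry argument: it exhibits one explicit model basis, shows via a cofactor computation that $\Aut(\slR)\simeq\mathrm{SO}(2,1)$, and concludes that the possible bracket tables of $B$-orthonormal bases are parametrized by $\OO(2,1)/\mathrm{SO}(2,1)\simeq\Z_2$, i.e.\ exactly the two choices $\delta=\pm1$; the pseudo-orthonormal case is then reduced to the orthonormal one by the change of basis $u_1=v_1$, $u_{2,3}=(v_2\pm v_3)/\sqrt2$, rather than redone directly as you do. (Your closing remark about relating an arbitrary basis to a model one by the structure group is in fact the paper's method.) Your route is more elementary and self-contained; the paper's buys reusable structure — the identification of $\Aut(\slR)$ with $\mathrm{SO}(2,1)$, and its complex analogue $\mathrm{SO}(3,\C)$, is invoked again in the $\SLC$ section, and the change-of-basis trick avoids repeating the case analysis. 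One small correction to your bookkeeping: in the pseudo-orthonormal case Definition~\ref{def: B-ON_B-PON} prescribes $B(v_1,v_1)=1$, not $0$, so the identity $\frac12\Tr(\ad_{v_1}^2)=a^2=B(v_1,v_1)=1$ applies directly (and $c=a$ also drops out of ad-invariance applied to $(v_2,v_3,v_1)$ together with $B(v_1,v_1)=1$); this only simplifies your argument, it does not change it.
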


\begin{proof} Let $v=(v_k)$ be a basis of $\slR$ and assume that their elements satisfy the bracket relations (\ref{eq:Lrel1sl2R}) for some $\delta\in\{-1,1\}$.  The
 linear maps $\ad_{v_k}$  can then be readily calculated and
since $B(x,y) = \frac{1}{2}\Tr(\ad_x\circ \ad_y)$, a simple computation allows us to check that $v=(v_k)$ is, indeed, a $B$-orthonormal basis satisfying condition~(\ref{def: B-ON}).

We now check that the converse holds.  To begin with, let us consider the particular basis $(e_k)$ of $\slR$, where
\begin{equation}\label{Eq: sl2-basis}
e_1=\lb \begin{matrix}  1/2 & 0 \cr 0 & -1/2 \end{matrix}\rb
\;\;\;\; , \;\;\;\;
e_2=\lb\begin{matrix}  0 & 1/2 \cr 1/2 & 0 \end{matrix}\rb
\;\;\;\; , \;\;\;\;
e_3=\lb\begin{matrix}  0 & 1/2 \cr -1/2 & 0 \end{matrix}\rb .
\end{equation}
The basis in question is clearly $B$-orthogonal and satisfies the bracket relations in (\ref{eq:Lrel1sl2R}) with $\delta =1$.
Furthermore, any other $B$-orthogonal basis $v=(v_k)$ is obtained from the present basis $(e_k)$ through the action of the orthogonal group of $B$
which is isomorphic to the classical group $\OO(2,1)$. Consider also the automorphism group of the Lie algebra $\slR$, the group $\Aut(\slR)$.

\bigbreak

\noindent {\it Claim:} $\Aut(\slR)$ is isomorphic to $\mathrm{SO}(2,1)$.

\begin{proof}[Proof of the Claim]
Take any basis $v=(v_k)$ satisfying Condition (\ref{eq:Lrel1sl2R}) and let $M$ be the matrix of $\varphi\in\mathrm{Aut}(\mathfrak{sl}(2,\mathbb{R}))$ with respect to this basis. The equation $[\varphi x,\varphi y]=\varphi[x,y]$ is easily seen, by direct computation, to be equivalent to
\begin{equation*}
c(M) = I_{2,1} M I_{2,1}
\end{equation*}
where $c(M)$ is the cofactor matrix of $M$ and $I_{2,1} = \Diag(1,1,-1)$. Then by taking transposes and multiplying
by $\det(M)$, we get that
\begin{equation*}
\det(M)M^{-1} = I_{2,1} M^t I_{2,1} \, .
\end{equation*}
This implies that $\det(M) = 1$ and the above equation is equivalent to having $M^t I_{2,1} M = I_{2,1}$. Thus $M\in\mathrm{SO}(2,1)$ and the claim is proved.
\end{proof}

Returning now to our main argument, we have that the bracket relations of a $B$-orthonormal basis of $\slR$ are parametrized
by the quotient space $\OO_B/\Aut(\slR)\simeq \OO(2,1)/\mathrm{SO}(2,1)$. This space is discrete and has two elements. Our first equivalence has thus been proved.

\bigbreak

In order to finish the proof of our lemma,  it remains to check that conditions (\ref{def: B-PON}) and (\ref{eq:Lrel2sl2R}) are equivalent.  The proof that (\ref{eq:Lrel2sl2R}) implies (\ref{def: B-PON}) is, again, a direct computation using the definition of $B$.  Conversely, to prove that (\ref{def: B-PON}) implies (\ref{eq:Lrel2sl2R}), take the change of basis
\begin{equation*}
u_1 = v_1
\;\;\;\; , \;\;\;\;
u_2 = \frac{v_2+v_3}{\sqrt{2}}
\;\;\;\; , \;\;\;\;
u_3 = \frac{v_2-v_3}{\sqrt{2}} \ .
\end{equation*}
and apply the result previously proved in this lemma.

\end{proof}

Recall the $B$-self-adjoint isomorphism $\Phi: \slR \longrightarrow \slR$ which defines the left-invariant metric $q$ via the identity $q(x,y)=B(\Phi x,y)$, for every $x, y \in \slR$.  Let $m_a(\lambda)$ and $m_g(\lambda)$ stand, respectively, for the algebraic and the
geometric multiplicity of an eigenvalue $\lambda$ of $\Phi$. By means of Jordan's normal form theorem, there exists a basis $v=(v_k)$
of $\RR^3$ where $\Phi$ is represented by one of the following matrices:

\begin{multicols}{2}
\begin{itemize}

\item[] {\bf Case 1:}  diagonal
$$
\lb\begin{matrix} \lambda_1 & 0 & 0 \cr 0 & \lambda_2 & 0 \cr 0 & 0 & \lambda_3 \end{matrix}\rb \phantom{\;\;\;\; ,
\zeta \neq 0}
$$

\item[] {\bf Case 3:} a real eigenvalue $\lambda$ such that $m_a(\lambda)-m_g(\lambda)=1$
$$
\lb\begin{matrix} \mu & 0 & 0 \cr 0 & \lambda & \zeta \cr 0 & 0 & \lambda \end{matrix}\rb
\;\;\;\; , \;\;\;\;
\zeta \neq 0
$$

\item[] {\bf Case 2:}  two complex  eigenvalues
$$
\lb\begin{matrix} \mu & 0 & 0 \cr 0 & \alpha & \beta \cr 0 & -\beta & \alpha \end{matrix}\rb
\;\;\;\; , \;\;\;\;
\beta \neq 0
$$

\item[] {\bf Case 4:}  a real eigenvalue $\lambda$ such that $m_a(\lambda)-m_g(\lambda)=2$
$$
\lb\begin{matrix} \lambda & 0 & \zeta \cr \zeta & \lambda & 0 \cr 0 & 0 & \lambda \end{matrix}\rb
\;\;\;\; , \;\;\;\;
\zeta \neq 0
$$
\end{itemize}
\end{multicols}

The fact that $\Phi$ is $B$-self-adjoint allows us to characterize the bases $v=(v_k)$ for each one of the above normal forms. This is
the content of the following lemma.

\begin{lema}\label{lm:B}
The basis in which the normal form of $\Phi$ is represented can be made:
\begin{itemize}
\item[(a)] $B$-orthonormal in cases 1 and 2;
\item[(b)] $B$-pseudo-orthonormal in case 3 and normalized so that $\zeta = q(v_3, v_3)$;
\item[(c)] $B$-pseudo-orthonormal in case 4 and normalized so that $\zeta>0$.
\end{itemize}
\end{lema}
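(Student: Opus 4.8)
The plan is to treat the statement as an exercise in the canonical–form theory of $B$-self-adjoint operators on the three-dimensional Lorentzian space $(\slR,B)$ of signature $(2,1)$. Two structural facts carry most of the weight. First, if $\Phi$ is $B$-self-adjoint then generalized eigenspaces of $\Phi$ (over $\CC$, inside $\slR\otimes\CC$) attached to \emph{distinct} eigenvalues are $B$-orthogonal: on eigenvectors this is $\lambda B(x,y)=B(\Phi x,y)=B(x,\Phi y)=\mu B(x,y)$, and the generalized case follows by induction on the heights. Second, grouping complex-conjugate pairs, $\slR$ becomes a $B$-orthogonal direct sum of $\Phi$-invariant real subspaces on each of which $B$ restricts non-degenerately, since a $B$-orthogonal summand of a non-degenerate space is non-degenerate. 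On each block $\Phi$ is either $\lambda+N$ with $N$ nilpotent, or acts with a conjugate pair of complex eigenvalues; the four matrices in the statement enumerate precisely the possibilities in dimension $3$. It then remains to adapt a basis block by block.

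In Case 1 every block is a genuine eigenspace, so Gram--Schmidt inside each block and rescaling give a $B$-orthogonal eigenbasis with $B(v_i,v_i)=\pm1$; by Sylvester's law exactly two of these are $+1$ and one is $-1$, and after reordering (which merely permutes the $\lambda_i$) the basis satisfies Condition~(\ref{def: B-ON}). In Case 2, write $\slR=E_\mu\oplus W$ with $W$ the $2$-dimensional real invariant subspace carrying the conjugate pair; the two summands are $B$-orthogonal by the first fact applied over $\CC$. Since $\Phi|_W$ has non-real spectrum it is not $\RR$-diagonalizable, so $B|_W$ cannot be definite; hence $B|_W$ has signature $(1,1)$ and $B|_{E_\mu}$ signature $(1,0)$, and we rescale $v_1\in E_\mu$ so that $B(v_1,v_1)=1$. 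Writing $\Phi|_W=\alpha+\beta J$ with $J$ trace-free, $B$-self-adjoint and $J^2=-\mathrm{id}$, in any $B$-orthonormal basis of $W$ the matrix of $J$ is $\bigl(\begin{smallmatrix}a&b\\-b&-a\end{smallmatrix}\bigr)$ with $b^2-a^2=1$, and conjugating by a Lorentz boost of $W$ shifts the parameter of $J$ additively, so a suitable boost together, if needed, with the reflection $v_3\mapsto-v_3$ brings $J$ to $\bigl(\begin{smallmatrix}0&1\\-1&0\end{smallmatrix}\bigr)$; then $\Phi|_W$ has the rotation--dilation form and $(v_1,v_2,v_3)$ satisfies Condition~(\ref{def: B-ON}).

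For Cases 3 and 4 the decisive point is that on the generalized $\lambda$-eigenspace the nilpotent $N=\Phi-\lambda$ is $B$-self-adjoint, so $B(N^ix,N^jy)=B(x,N^{i+j}y)$ vanishes whenever $N^{i+j}=0$. In Case 3, $\slR=E_\mu\oplus G_\lambda$ with $N|_{G_\lambda}$ of rank $1$ and $N^2=0$, so $\mathrm{im}\,N$ is $B$-isotropic; this prevents $B|_{G_\lambda}$ from being definite, whence $B|_{G_\lambda}$ has signature $(1,1)$ and $B|_{E_\mu}$ signature $(1,0)$, giving $B(v_1,v_1)>0$. Taking a cyclic vector $w\in G_\lambda$, adjusting $w$ by a multiple of $Nw$ so that $B(w,w)=0$ (possible since $B(Nw,w)\neq0$ by non-degeneracy), and rescaling $Nw$ so that it pairs with $w$ to $1$, produces a $B$-pseudo-orthonormal basis realizing the Case 3 matrix, and then $\zeta=B(\Phi v_3,v_3)=q(v_3,v_3)$ by substitution; the sub-case $\mu=\lambda$, where $\slR$ is itself the generalized eigenspace, is entirely analogous. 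In Case 4, $N$ is a single size-$3$ nilpotent Jordan block on $\slR$ and, for a cyclic vector $w$, the identity above forces the Gram matrix of $B$ in the basis $(N^2w,Nw,w)$ to take the shape $\bigl(\begin{smallmatrix}0&0&\beta\\0&\beta&\ast\\ \beta&\ast&\ast\end{smallmatrix}\bigr)$ with $\beta\neq0$; its determinant is $-\beta^3$, negative by the signature, so $\beta>0$. Replacing $w$ by $w+sNw+rN^2w$ annihilates the two $\ast$'s in the last row and column, and a final rescaling of the Jordan chain gives a $B$-pseudo-orthonormal basis realizing the Case 4 matrix, with enough residual freedom to rescale $\zeta$ by an arbitrary nonzero factor and thus to arrange $\zeta>0$.

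The step I expect to be the genuine obstacle is Case 2: reaching the exact rotation--dilation normal form cannot be done by orthogonalization alone but rests on the orbit analysis of $B$-self-adjoint complex structures on the Lorentz plane under $\OO(1,1)$, namely the computation that a boost shifts the parameter of $J$. The nilpotent Cases 3 and 4 are conceptually routine once the pairing identity for $N$ has been noted; there the only delicate work is the bookkeeping that pins down the prescribed normalizations $\zeta=q(v_3,v_3)$ and $\zeta>0$ exactly.
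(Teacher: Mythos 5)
Your argument is correct and establishes all three normalizations, but it is organized around different devices than the paper's proof, so a comparison is worthwhile. The paper also proceeds case by case, but always from an explicit Jordan basis with concrete corrections: in Case 2 it complexifies $\slR$, uses $H$-orthogonality of the eigenvectors of the complexified operator and the normalization $H(V,V)=2$ to read off a $B$-orthonormal basis, whereas you stay over $\RR$, write $\Phi|_W=\alpha+\beta J$ with $J$ $B$-self-adjoint and $J^2=-\mathrm{id}$, and finish with an $\OO(1,1)$-orbit argument; this is the step you flag as the real obstacle, and it does go through (in a null basis of $W$ one finds $J=\bigl(\begin{smallmatrix}0&q\\ r&0\end{smallmatrix}\bigr)$ with $qr=-1$, and a boost scales $q,r$ by $e^{\mp 2t}$, so a boost plus the reflection $v_3\mapsto -v_3$ gives the rotation form). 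In Cases 3 and 4 your cyclic-vector formulation with the isotropy of $\mathrm{im}\,N$ is the invariant version of the paper's explicit substitutions $u_1,u_2,u_3$, and your determinant computation $\det=-\beta^3<0$ replaces the paper's argument that a Euclidean plane contains no null vectors in order to get $B(u_1,u_1)>0$. Two points deserve tightening. First, in Case 3 the sub-case $\mu=\lambda$ is not covered by orthogonality of distinct eigenvalues: you still need an eigenvector $v_1$ that is $B$-orthogonal to the cyclic plane $\mathrm{span}(Nw,w)$, which requires checking that the $B$-orthogonal complement of that plane lies in $\ker N$ (it does, since $Nu\in\mathrm{span}(Nw)$ and $B(Nu,w)=B(u,Nw)=0$ force $Nu=0$); this is exactly what the correction term in the paper's $u_1$ accomplishes. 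Second, in Case 4 rescaling the cyclic vector $w\mapsto cw$ multiplies $\zeta$ only by the positive factor $|c|$; the sign of $\zeta$ is fixed separately by the choice $\zeta=\pm\sqrt{B(Nw,Nw)}$ when setting $v_1=Nw/\zeta$, so the phrase about rescaling by an arbitrary nonzero factor is imprecise, though the conclusion $\zeta>0$ stands. Likewise your observation that $\zeta=q(v_3,v_3)$ in Case 3 is automatic for any $B$-pseudo-orthonormal basis realizing the normal form agrees with the paper, where $\zeta=B(v_2,v_3)=q(u_3,u_3)$ is obtained from the explicit change of basis.
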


\begin{proof}
We are going to consider separately the four different cases above.

\smallskip

\noindent {\it Case 1:} We start by assuming that the three eigenvalues of $\Phi$ are real and pairwise distinct. In this case, the $B$-self-adjointness of $\Phi$ immediately implies that $
B(v_k,v_l)=0$ whenever $k \ne l$.  It now
suffices to normalize the vectors $v_k$ so that we obtain a $B$-orthonormal basis.

Suppose now that there exists an eigenvalue with (algebraic) multiplicity $2$. Concretely, suppose that $\lambda_1 = \lambda_2$ and $\lambda_3 \neq \lambda_1$. A similar argument to the one applied above shows that $v_3$ is
orthogonal to $\Span(v_1,v_2)$, the space spanned by $v_1$ and $v_2$. Then $\Span(v_1,v_2)$ is either an Euclidean plane or a Lorentzian plane and we can use a Gram-Schmidt process to get a $B$-orthonormal basis of eigenvectors.
Finally, if there is an eigenvalue of (algebraic) multiplicity $3$, then $\Phi$ is a multiple of the identity and the result is immediate.

\smallskip

\noindent {\it Case 2:} Let $\mu$ be the real eigenvalue and $\Lambda,\overline{\Lambda}$ be the two complex conjugate eigenvalues
of $\Phi$ with $\Lambda=\alpha+i\beta$, $\beta \neq 0$. Consider $\slR+i\slR$  the complexification of $\slR$, and $H$ and $\Omega$ the complex
extensions of $B$ and $\Phi$, respectively. Note that $H$ is non-degenerate and $\Omega$ is $H$-self-adjoint.

Let $v_1, \, V, \, \overline{V}$ be (non-zero) eigenvectors associated to the eigenvalues $\mu,\Lambda,\overline{\Lambda}$ of $\Omega$,
respectively, where $V = v_2 + i v_3$ for some real vectors $v_2, v_3 \in \slR$.  The fact that $\Omega$ is $H$-self-adjoint implies
that the basis $(v_1,V, \overline{V})$ is $H$-orthogonal. This, in turn, implies that $B(v_1,v_2) = B(v_1,v_3) = 0$ and also that
$B(v_2,v_2) + B(v_3,v_3) = 0$. Furthermore, the non-degeneracy of $B$ implies that $B(v_1,v_1) \neq 0$ and, therefore, we must have
$B(v_1,v_1) > 0$, otherwise $\Span(v_2,v_3)$ would be an Euclidean plane meaning that $\Phi$ would be diagonalizable.
Finally, we use the non-degeneracy of $H$ to conclude that $H(V,V) \neq 0$ and we normalize $V$ so that $H(V,V) = 2$. This implies
that $B(v_2,v_2) - B(v_3,v_3) = 2$ and $B(v_2,v_3) =0$. Summing all up, we get that $v=(v_k)$ is a $B$-orthonormal basis.

\smallskip

\noindent{\it Case 3:} Assume that $\Phi$ has a (real) eigenvalue $\lambda$ such that $m_a(\lambda)-m_g(\lambda)=1$. Let then
$v=(v_k)$ be a basis for which $\Phi$ is represented by the corresponding normal form with $\zeta =1$.


From the fact that $\Phi$ is $B$-self-adjoint, we can conclude that $B(v_1,v_2)=0 = B(v_2,v_2)$. Since $B$ is non-degenerate we get that $B(v_2,v_3)\neq 0$ and also that $B(v_1,v_1)\neq 0$, otherwise the restriction of $B$ to $\mathrm{Span}(v_1,v_2)$ would vanish identically. If $M$ is the matrix associated to $B$, the fact that $\mathrm{det}(M) < 0$ implies that $B(v_1,v_1)>0$. Taking
\begin{equation*}
u_1  = \frac{v_1}{\sqrt{B(v_1,v_1)}}-\frac{B(v_1,v_3)}{B(v_2,v_3)\sqrt{B(v_1,v_1)}}v_2 \,\,\,\,\, ,\,\,\,\,\,
u_2 = \frac{v_2}{B(v_2,v_3)} \,\,\,\,\, , \,\,\,\,\,
u_3  = v_3 - \frac{B(v_3,v_3)}{2B(v_2,v_3)} \, v_2 \, .
\end{equation*}

It can be easily checked that $u=(u_k)$ is a $B$-pseudo-orthornormal basis.  If the eigenvalues $\lambda$ and $\mu$ are such that $\lambda \neq \mu$, it can be additionaly proved that $B(v_1,v_3)=0$. In either case, we can check that $\Phi$ maintains its normal form but now with
$\zeta  = B(v_{2},v_{3}) \lb =B(\Phi u_3, u_3) = q(u_3, u_3)\rb$.

\bigbreak

\noindent{\it Case 4:} Assume that $\Phi$ has a (real) eigenvalue $\lambda$ such that $m_a(\lambda) - m_g(\lambda) = 2$ and
let $v=(v_k)$ be a basis for which $\Phi$ is represented by the corresponding normal form with $\zeta = 1$.

Since $\Phi$ is $B$-self-adjoint, there follows that $B(v_1,v_2) = B(v_2,v_2) = 0$ and $B(v_1,v_1) = B(v_2,v_3)$. In turn, the
non-degeneracy of $B$ implies $B(v_1,v_1) = B(v_2,v_3) \neq 0$. Consider then the basis given by the vectors
\begin{equation*}
u_1  = v_1 + K v_2 \, ,  \quad
u_2 = v_2 \, ,  \quad
u_3 = v_3 + K v_1 + L v_2
\end{equation*}
where
\[
K = - \frac{B(v_1,v_3)}{2 B(v_1,v_1)} \qquad \mbox{ and } \qquad L= - \frac{B(v_3,v_3)+2 K B(v_1,v_3) + K^2B(v_1,v_1)}{2B(v_2,v_3)} \, .
\]
By what precedes, it can be checked that $B(u_1,u_2) = B(u_1, u_3) = B(u_2, u_2) = B(u_3,u_3) =0$. We can deduce, in addition, that
$B(u_1,u_1) = B(u_2, u_3) \neq 0$. Furthermore, we must have $B(u_1,u_1)>0$, otherwise $\Span(u_2,u_3)$ would be an Euclidean
plane and therefore could not contain null vectors. Now, by letting
\begin{equation*}
w_1 = \frac{u_1}{\sqrt{B(u_1,u_1)}} \, , \quad
w_2 = \frac{u_2}{B(u_2,u_3)} \, , \quad
w_3 = u_3.
\end{equation*}
we obtain a $B$-pseudo-orthornormal basis,$(w_k)$, with respect to which the isomorphism $\Phi$ maintains its normal form
but now with $\zeta = \sqrt{B(u_1,u_1)} = \sqrt{B(v_1,v_1)}$.
\end{proof}

Equipped with these hand-on results about the structure of $\slR$, we will reobtain,  in Sections \ref{Sec:Case1},\ref{Sec:Case3}, \ref{sec:Case2and4} and by different methods, the well-known result of Bromberg and Medina.

\begin{teo}[\cite{B-M}]\label{Thm:B-M}
The completeness of a left-invariant pseudo-Riemannian metric on $\SLR$ is characterized in terms of its associated isomorphism $\Phi$ as follows.
\begin{itemize}
\item[-] In case 1, the metric $q$ is complete if and only if $\left(\frac{1}{\lambda_2}-\frac{1}{\lambda_3}\right)\left(\frac{1}{\lambda_3}-\frac{1}{\lambda_1}\right)\leq 0$.
\item[-] In case 2, the metric $q$ is incomplete.
\item[-] In case 3, the metric $q$ is complete if and only if $\left(\frac{1}{\mu}-\frac{1}{\lambda}\right)\zeta \leq 0$.
\item[-] In case 4, the metric $q$ is incomplete.
\end{itemize}

\end{teo}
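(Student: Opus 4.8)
The plan is to carry everything out through the Lax-pair reduction of Corollary~\ref{cor: lax-pair}: a left-invariant metric $q$ is geodesically complete precisely when the quadratic vector field $\dot z = [z,\Phi^{-1}z]$ on $\slR\cong\R^3$ is complete, and this field carries the two first integrals $F(z)=B(z,z)$ and $G(z)=B(z,\Phi^{-1}z)$. For each of the four normal forms I would first install the adapted basis furnished by Lemma~\ref{lm:B} ($B$-orthonormal in Cases 1 and 2, $B$-pseudo-orthonormal in Cases 3 and 4), substitute the bracket relations of Lemma~\ref{lm:A} into $[z,\Phi^{-1}z]$, and write the system out explicitly in coordinates $z=(z_1,z_2,z_3)$. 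The argument then divides into a confinement half (completeness) and a blow-up half (incompleteness).

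Completeness is at stake only in Cases 1 and 3, and there I would exhibit a definite first integral. In Case 1 the system reads $\dot z_1=(\tfrac1{\lambda_2}-\tfrac1{\lambda_3})z_2z_3$ together with its two cyclic companions, while $\mu F+G$ is the diagonal form $(\mu+\tfrac1{\lambda_1})z_1^2+(\mu+\tfrac1{\lambda_2})z_2^2-(\mu+\tfrac1{\lambda_3})z_3^2$. When $(\tfrac1{\lambda_2}-\tfrac1{\lambda_3})(\tfrac1{\lambda_3}-\tfrac1{\lambda_1})<0$ the value $\tfrac1{\lambda_3}$ is a strict extremum of $\{\tfrac1{\lambda_1},\tfrac1{\lambda_2},\tfrac1{\lambda_3}\}$, so $\mu$ can be chosen making $\mu F+G$ definite; its level sets are compact ellipsoids trapping every trajectory, which forces every geodesic to be complete. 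In Case 3 the combination $H=G-\tfrac1\lambda F=(\tfrac1\mu-\tfrac1\lambda)z_1^2-\tfrac{\zeta}{\lambda^2}z_3^2$ involves only $(z_1,z_3)$, which form a closed subsystem; precisely when $(\tfrac1\mu-\tfrac1\lambda)\zeta<0$ this form is definite, the $(z_1,z_3)$-orbits are bounded, and the remaining coordinate $z_2$ solves a scalar linear equation with bounded coefficients and therefore cannot escape in finite time. The boundary cases where the product vanishes I would dispatch by hand: a vanishing coefficient decouples the system into lower-order blocks that integrate over all of $\R$.

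For incompleteness --- required in Case 1 when the product is positive, in Case 3 when $(\tfrac1\mu-\tfrac1\lambda)\zeta>0$, and unconditionally in Cases 2 and 4 --- I would in every instance restrict the flow to an invariant locus on which a single coordinate $u$ obeys a Riccati equation $\dot u=k u^2$ with $k\neq0$; such $u$ blows up in finite time, producing a geodesic with bounded maximal interval. Case 4 needs no search, the normal form making the third equation autonomous, $\dot z_3=-\tfrac{\zeta}{\lambda^2}z_3^2$, so every orbit with $z_3(0)\neq0$ is already incomplete. In Case 3 the indefinite conserved quantity $H$ above has a zero locus splitting into the two invariant lines $z_1=\pm\sqrt{-\eta/m}\,z_3$ (with $m=\tfrac1\mu-\tfrac1\lambda$, $\eta=-\zeta/\lambda^2$, and $-\eta/m>0$ exactly when $(\tfrac1\mu-\tfrac1\lambda)\zeta>0$), on each of which $\dot z_3$ is a nonzero multiple of $z_3^2$. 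In Case 1, when $a=\tfrac1{\lambda_2}-\tfrac1{\lambda_3}$ and $b=\tfrac1{\lambda_3}-\tfrac1{\lambda_1}$ satisfy $ab>0$, the ratios $b/a$ and $c/a$ (with $c=a+b$) are positive, so $(1,\sqrt{b/a},\sqrt{c/a})$ is a genuine real invariant direction carrying $k=\sqrt{bc}\neq0$. Finally, in Case 2 one has $\dot z_1=-r(z_2^2+z_3^2)$ with $r=\beta/(\alpha^2+\beta^2)\neq0$; imposing invariance reduces to the constraint $r_0^2=q_0^2+1$ together with a single scalar equation of the form $q_0\sqrt{q_0^2+1}=\mathrm{const}$, and since the left-hand side is an increasing bijection of $\R$, a real invariant line always exists.

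The delicate half is incompleteness, and the genuine obstacle is the existence of a \emph{real} invariant line in Case 2. In Cases 1, 3 and 4 the blow-up direction is read off by inspection or from the zero locus of $H$, but in Case 2 one must solve a nonlinear system for $(q_0,r_0)$ and check that a real solution survives for every permissible triple $(\alpha,\beta,\mu)$ with $\beta\neq0$; the reduction to a scalar monotone equation is what makes this tractable. The remaining subtlety is pure bookkeeping: one must track the sign of $k$ and handle the reciprocals $\tfrac1{\lambda_i}$, $\tfrac1\mu$, $\tfrac1\lambda$ with care, so that the strict inequalities land on the incomplete side while the degenerate boundary cases land on the complete side, thereby reproducing the exact thresholds ``$\leq 0$'' stated in the theorem.
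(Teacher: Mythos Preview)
Your plan coincides with the paper's: completeness in Cases~1 and~3 via a (semi-)definite combination of the first integrals $I_1,I_2$ (and a linear equation for the leftover coordinate in Case~3), and incompleteness everywhere else via an idempotent, an invariant ray on which the flow restricts to $\dot u=ku^2$. The only difference is presentational: the paper locates idempotents by compactifying to $\RR\PP(3)$ and reading them off as singular points of the induced foliation on $\Delta_\infty$, while you search for them directly in $\R^3$; these are the same objects. Your Case~4 is actually cleaner than the paper's, since the autonomous equation $\dot z_3=-\zeta\nu^2 z_3^2$ already blows up for every $z_3(0)\neq 0$ without isolating any line. Two cautions. In Case~3, the zero locus of $H$ in $\R^3$ is a pair of \emph{planes} $z_1=\pm\sqrt{b/a}\,z_3$, not lines; the true idempotent inside each plane has $z_2=-\tfrac{b}{2a}z_3$, so one more step is needed. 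In Case~2 your reduction to ``$r_0^2=q_0^2+1$ and $q_0\sqrt{q_0^2+1}=\text{const}$'' is not transparent; a cleaner affine computation shows the idempotent condition forces $\kappa^2=a^2+b^2$ (with $a=\gamma-\eta$, $b=\zeta$) and choosing the sign of $\kappa$ to match $b$ always yields a real direction, which is exactly what the paper's explicit formulas for $p_1,p_2\in\Delta_\infty$ encode.
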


Furthermore, we will present a detailed study of completeness or incompleteness of every single geodesic of the metrics in question while, in addition and in the case of an incomplete geodesic, we estimate the size of its maximal domain of definition.

\smallskip

From the representation theory of $\SLR$, it can be seen that every non-compact semisimple Lie group contains a Lie subgroup that is isomorphic to $\SLR$.  This well-known fact and the previous theorem can be combined to prove the following.

\begin{coro}\label{cor:incomplete-pr-metrics}
Let $G$ be a non-compact semisimple Lie group. Then $G$ can be endowed with incomplete left-invariant pseudo-Riemannian metrics.
\end{coro}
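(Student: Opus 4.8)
The plan is to reduce the general statement to the case $G = \SLR$ already treated in Theorem \ref{Thm:B-M}. First I would recall the structural input: every non-compact semisimple Lie group $G$ admits a Lie subgroup $H$ isomorphic to $\SLR$ (equivalently, the Lie algebra $\fg$ contains a subalgebra isomorphic to $\slR$; this is a standard consequence of the existence of a non-compact simple factor together with the Jacobson--Morozov theorem, which produces an $\slR$-triple from any nilpotent element). Having fixed such an $H \subseteq G$ with Lie algebra $\mathfrak h \simeq \slR$, the strategy is to build a left-invariant metric on $G$ whose restriction to $H$ is an incomplete metric on $\SLR$ in the sense of Theorem \ref{Thm:B-M}, and then observe that incompleteness of the restricted metric forces incompleteness of the metric on $G$.

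The key steps, in order, are as follows. First, choose a complement: since $\fg$ is semisimple, the Killing form $\kappa$ of $\fg$ is non-degenerate, and its restriction to $\mathfrak h$ is a non-zero multiple of the Killing form of $\slR$, hence non-degenerate as well (the adjoint action of $\slR$ on $\fg$ is completely reducible and $\mathfrak h$ itself is an irreducible summand on which the form is definite-in-signature, namely signature $(2,1)$). Therefore $\fg = \mathfrak h \oplus \mathfrak h^{\perp_\kappa}$ as vector spaces. Second, define an inner product $\langle\,,\,\rangle$ on $\fg$ by taking on $\mathfrak h$ any bilinear form $q_{\mathfrak h}$ corresponding to an incomplete left-invariant metric on $\SLR$ (for instance, by Theorem \ref{Thm:B-M}, one arising from a $B$-self-adjoint $\Phi$ with two complex eigenvalues, i.e. Case 2, which is always incomplete), and on $\mathfrak h^{\perp_\kappa}$ any non-degenerate symmetric bilinear form, declaring the two blocks orthogonal. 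This is a non-degenerate symmetric bilinear form on $\fg$, hence defines a left-invariant pseudo-Riemannian metric $q$ on $G$. Third, and this is the crux, I would argue that $H$ is a totally geodesic submanifold of $(G,q)$: because $q$ is left-invariant and the decomposition $\fg = \mathfrak h \oplus \mathfrak h^{\perp}$ is orthogonal with $\mathfrak h$ a subalgebra, the Koszul formula shows $\langle \nabla_x y, z\rangle = 0$ for $x,y \in \mathfrak h$ and $z \in \mathfrak h^{\perp}$ — one checks each of the three Koszul terms $\langle[x,y],z\rangle$, $\langle[z,x],y\rangle$, $\langle[z,y],x\rangle$ vanishes, using that $[\mathfrak h,\mathfrak h]\subseteq\mathfrak h$ and $[\mathfrak h, \mathfrak h^{\perp}]\subseteq \mathfrak h^{\perp}$ (the latter since $\mathfrak h^{\perp_\kappa}$ is $\ad_{\mathfrak h}$-invariant by $\kappa$-invariance of $\ad$). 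Hence the Levi-Civita connection of $(G,q)$ restricted to $H$ coincides with that of $(H, q|_H)$, so geodesics of $H$ are geodesics of $G$. Finally, since $q|_H$ corresponds to an incomplete metric on $\SLR$, there is a geodesic $\gamma$ of $H$ with finite-length (or half-line) maximal domain; viewed as a geodesic of $G$ it is still inextendible, because any extension in $G$ would, by uniqueness of geodesics, have to stay in $H$ and extend $\gamma$ there. Therefore $(G,q)$ is incomplete.

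The main obstacle is the totally geodesic step: one must be careful that left-invariance plus the subalgebra condition genuinely gives $\mathfrak h^\perp$ invariant under $\ad_{\mathfrak h}$ and that all three Koszul terms vanish — this is where the $\kappa$-orthogonality of the splitting (as opposed to an arbitrary complement) is essential, since an arbitrary vector-space complement to a subalgebra need not be $\ad_{\mathfrak h}$-invariant and the submanifold need not be totally geodesic. A secondary, more bookkeeping point is to make sure the chosen $q|_{\mathfrak h}$ really is one of the incomplete normal forms of Theorem \ref{Thm:B-M}; taking Case 2 (two complex eigenvalues), or Case 4, sidesteps any inequality check since those are unconditionally incomplete. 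Everything else is routine.
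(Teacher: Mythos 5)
Your construction is in substance the same as the paper's. The paper also fixes a subalgebra $\mathfrak{h}\simeq\slR$ inside $\fg$, writes $\fg=\mathfrak{h}\oplus V$, and chooses the metric through a $B$-self-adjoint $\Phi$ with $\Phi(\mathfrak{h})\subseteq\mathfrak{h}$ whose restriction to $\mathfrak{h}$ is one of the incomplete metrics of Theorem~\ref{Thm:B-M}; written with respect to the Killing-orthogonal complement, this is exactly your block-diagonal metric. Where you genuinely diverge is the mechanism for transferring incompleteness. The paper never leaves the Euler--Arnold formalism: since $\mathfrak{h}$ is a $\Phi$-invariant subalgebra, the Lax field $\dot z=[z,\Phi^{-1}z]$ of Corollary~\ref{cor: lax-pair} is tangent to $\mathfrak{h}$ and restricts there to the Lax field of the incomplete $\slR$ metric, so its incomplete integral curves sit inside $\fg$ and correspond to incomplete geodesics of $G$. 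You instead prove that the subgroup $H$ is totally geodesic via the Koszul formula and push incompleteness through the Gauss formula. Your Koszul computation is correct: ad-invariance of $\kappa$ gives $[\mathfrak{h},\mathfrak{h}^{\perp}]\subseteq\mathfrak{h}^{\perp}$, the restriction $\kappa|_{\mathfrak{h}}$ is a positive multiple of the Killing form of $\slR$ so the splitting exists, and choosing Case~2 (unconditionally incomplete) avoids any inequality check. A very minor point: the connected subgroup integrating $\mathfrak{h}$ need not be isomorphic to $\SLR$ itself (it could be a quotient such as $\mathrm{PSL}(2,\R)$), but this is harmless since the criterion of Theorem~\ref{Thm:B-M} is really a statement about the Euler--Arnold system on $\slR$ and applies to any group with that Lie algebra.

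The one step that is not justified as written is the final inextendibility claim, namely that ``any extension in $G$ would, by uniqueness of geodesics, have to stay in $H$''. Uniqueness only tells you that the ambient extension agrees with the $H$-geodesic on the parameters where the latter is defined; to forbid an extension past the maximal parameter $b$ you must know that the endpoint $\tilde\gamma(b)$ lies in $H$ and that the limiting velocity is tangent to $H$. For a merely immersed, possibly non-closed, totally geodesic submanifold this can fail (an inextendible geodesic of a totally geodesic submanifold may well extend in the ambient manifold), so the step needs an extra ingredient. It is easily repaired: a connected semisimple Lie subgroup is closed (Yosida--Mostow), hence $TH$ is closed in $TG$ along $H$, the limit initial condition is tangent to $H$, and total geodesy plus uniqueness then produce an extension inside $H$, a contradiction. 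Alternatively --- and this is precisely why the paper argues at the Lie algebra level --- the incompleteness in Theorem~\ref{Thm:B-M} comes from finite-time blow-up of the left-trivialized velocity $x(t)=D_{\gamma(t)}L_{\gamma(t)^{-1}}\dot\gamma(t)\in\mathfrak{h}$; if the geodesic of $G$ extended past $b$, then $x(t)$ would extend continuously past $b$, which is impossible, and no closedness of $H$ is needed. With either repair your argument is complete and correct.
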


\begin{proof}
Let $\mathfrak{g}$ be the Lie algebra of $G$. Consider $\mathfrak{h}$ a copy of $\mathfrak{sl}(2,\mathbb{R})$ inside $\mathfrak{g}$ and write
$\mathfrak{g} = \mathfrak{h} \oplus V$.
Take a left-invariant pseudo-Riemannian metric $q$ with associated isomorphism $\Phi$ such that $\Phi(\mathfrak{h}) \subseteq \mathfrak{h}$ and the restriction of  $q$ to $\mathfrak{h}$ is one of the incomplete metrics of Theorem \ref{Thm:B-M}. By construction, the geodesic flow leaves the Lie subalgebra $\mathfrak{h}$ invariant and we have incomplete integral curves of the Euler-Arnold vector field contained in $\mathfrak{h}$ and therefore in $\mathfrak{g}$.
\end{proof}

\medskip

\section{Characterization of geodesics in case 1}\label{Sec:Case1}

Let us start by assuming that the isomorphism $\Phi$ is diagonalizable. It was proved in Lemma~\ref{lm:B} that there exists
a $B$-orthonormal basis $v = (v_k)$ with respect to which $\Phi$ is a diagonal matrix $\Phi = {\rm diag} \, (\lambda_1, \lambda_2,
\lambda_3)$. In this case $\Phi^{-1}$ is a diagonal matrix as well. More precisely, $\Phi^{-1} = {\rm diag} \, (\nu_1, \nu_2,
\nu_3)$, where $\nu_i = 1/\lambda_i$. Fix an element $z \in \slR$ and let $(z_1,z_2,z_3)$ stand for its coordinates with respect
to the basis $v = (v_k)$. Recall that the geodesics associated to $q$ on ${\rm SL} \, (2,\RR)$ are in one-to-one correspondence
with the integral curves on $\slR$ of the differential system $\dot{z} = [z,\Phi^{-1}z]$, $z \in \slR$. Being $v = (v_k)$ a
$B$-orthonormal basis, up to reordering its elements we can assume that $B(v_1,v_1) = B(v_2,v_2) = -B(v_3,v_3) = 1$ and $B(v_i,v_j)
= 0$ whenever $i \ne j$. From Lemma~\ref{lm:A} there follows that the differential system above can be written, in the affine
coordinates $(z_1, z_2, z_3) \in \RR^3$, as
\begin{align}\label{sist_EA}
\left(\begin{matrix}\dot{z}_{1} \cr \dot{z}_{2} \cr \dot{z}_{3}\end{matrix}\right)=
\left(\begin{matrix}
\delta (\nu_{2} - \nu_{3}) z_{2} z_{3} \cr
\delta (\nu_{3} - \nu_{1}) z_{1} z_{3} \cr
\delta (\nu_{2} - \nu_{1}) z_{1} z_{2}
\end{matrix}\right) \, ,
\end{align}
for $\delta \in \{-1,1\}$. Naturally, it suffices to consider the case where $\delta = 1$, since if $\varphi = \varphi(t)$ is a
solution of the system for $\delta = 1$, then $\psi = \psi(t) = \varphi(-t)$ is a solution for $\delta = -1$. In particular, one
system is complete if and only if so is the other. So, let us assume in what follows that $\delta = 1$. By letting $a = \nu_{2} -
\nu_{3}$, $b = \nu_{3} - \nu_{1}$ and $c = \nu_{2} - \nu_{1} = a + b$, the system of differential equations~(\ref{sist_EA}) is
in natural correspondence with the vector field
\begin{equation}\label{eq:EAVF1}
E = a z_{2}z_{3} \dd{z_{1}} + b z_{1}z_{3} \dd{z_{2}} + c z_{1}z_{2} \dd{z_{3}} \, ,
\end{equation}
which will be called Lax vector field or, by abuse of language, Euler-Arnold vector field.


\subsection{Characterization of geodesically complete metrics}

The characterization of the geodesically complete left-invariant pseudo-Riemannian metrics $q$ in this case can be easily described
in terms of the eigenvalues of $\Phi$ or, equivalently, $\Phi^{-1}$. More precisely, we have the following.

\begin{teo}\label{thm:slR(S1)}
The left-invariant pseudo-Riemannian metric $q$ is geodesically complete if and only if $ab \leq 0$.
\end{teo}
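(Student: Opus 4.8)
The vector field is $E = a z_2 z_3\,\ddx[z_1] + b z_1 z_3\,\ddx[z_2] + c z_1 z_2\,\ddx[z_3]$ with $c = a+b$, and by Corollary~\ref{cor: lax-pair} it carries the two first integrals $F(z) = z_1^2 + z_2^2 - z_3^2$ and $G(z) = \nu_1 z_1^2 + \nu_2 z_2^2 - \nu_3 z_3^2$. Since $E$ is homogeneous quadratic on $\RR^3$, completeness is governed by the behaviour at infinity, which I will analyze by restricting the flow to the level sets of the two first integrals. The key observation is that a generic common level set $\{F = e_1,\ G = e_2\}$ is a real algebraic curve in $\RR^3$, and the flow is complete on it precisely when the relevant connected components are \emph{compact}: on a compact invariant set the complete-ness of the (analytic, hence locally Lipschitz) vector field is automatic. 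So the whole problem reduces to deciding, in terms of $a,b$, whether the generic trajectory stays in a bounded region.

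\textbf{Key steps.} First I would form a suitable linear combination $H = \alpha F + \beta G$ of the first integrals and ask when one can be chosen that is positive definite (or negative definite) as a quadratic form; if such an $H$ exists then every trajectory lies in a compact quadric and the metric is complete. Writing $H$ in coordinates, its coefficients on $z_i^2$ are $\alpha \pm \beta \nu_i$, and a short computation (using $\nu_2 - \nu_3 = a$, $\nu_3 - \nu_1 = b$, $\nu_2 - \nu_1 = a+b$) shows that definiteness of $H$ is possible exactly when $ab \le 0$; this gives the ``if'' direction. Second, for the ``only if'' direction I must show that when $ab > 0$ (i.e.\ $a,b$ strictly the same sign, and $c=a+b$ of that same sign too) there is an incomplete geodesic. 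Here I would exhibit an explicit escaping trajectory: for $ab>0$ all three coefficients $a,b,c$ have the same sign, so along the $z_1 z_2 z_3 > 0$ octant all three coordinates are increasing (or all decreasing), and one shows by a comparison/Riccati argument that some coordinate blows up in finite time. Concretely, restricting to an invariant line or an invariant plane where two coordinates are proportional reduces $E$ to a scalar equation $\dot u = k u^2$ with $k \ne 0$, which has solution $u = u_0/(1 - k u_0 t)$ blowing up at $t = 1/(k u_0)$; the task is to locate such an invariant curve inside $\slR$ and check it is not ruled out by the first integrals. The degenerate boundary case $ab = 0$ (some $\nu_i$ equal, i.e.\ $\Phi$ has a repeated eigenvalue) has to be handled separately: then one of $a,b,c$ vanishes, the system decouples, and direct integration shows completeness, consistent with $ab = 0 \le 0$.

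\textbf{Main obstacle.} The ``if'' direction is essentially linear algebra once the first integrals are in hand, so the real work is the ``only if'' direction: producing, for every $(a,b)$ with $ab>0$, a genuinely incomplete geodesic and (as promised in the paper) estimating its maximal interval of definition. The subtlety is that not every initial condition escapes — the level sets that happen to be compact components still give complete geodesics even when $ab>0$ — so I must carefully select initial data lying on a \emph{non-compact} component of $\{F = e_1, G = e_2\}$ and then prove finite-time blow-up there, presumably by bounding one coordinate below by a solution of $\dot u \ge k u^2$ with $k>0$ using the sign pattern of $a,b,c$ together with the constraints imposed by $F$ and $G$. Getting the inequality to close (rather than merely $\dot u \ge 0$) is where the computation needs care, and this is the step I expect to consume the bulk of the proof; the techniques from \cite{RR_Applications} on domains of definition of polynomial vector fields are exactly what I would invoke to make the blow-up quantitative.
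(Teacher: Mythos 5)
Your proposal follows essentially the same route as the paper: for $ab<0$ one takes a positive-definite linear combination of the two first integrals to trap every trajectory in a compact quadric, the boundary case $ab=0$ is settled separately by decoupling ($\dot z_1=0$) and direct integration, and for $ab>0$ the incomplete geodesic is precisely the idempotent line through the origin in the direction $\left(\sqrt{a/c},\sqrt{b/c},1\right)$, on which $E$ restricts to $\dot u=\pm\sqrt{ab}\,u^{2}$ and on which both first integrals vanish, so they create no obstruction (the paper phrases this via the extension to $\RR\PP(3)$ and the invariant line over the singular point at infinity, which is the same object). One small correction: a definite combination of the first integrals exists exactly when $ab<0$, not $ab\le 0$, but your separate treatment of $ab=0$ already closes that gap.
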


To prove Theorem~\ref{thm:slR(S1)} we may consider separately the cases where $ab > 0$ and $ab \leq 0$. With respect to the case
where $ab > 0$ we can prove the existence of the so-called idempotents (cf. \cite{B-M}), which correspond to straight lines through
the origin that are invariant by the foliation induced by $E$ and such that the restriction of $E$ to them does not vanish identically.
The restriction of $E$ to an idempotent is a quadratic (homogeneous) vector field and, consequently, the solution of the associated
differential equation is not complete. In the case where $ab \leq 0$ we will prove that the integral curves of $E$ are contained in
a compact part of $\R^3$. The solutions of the differential system associated to $E$ must then be complete. Let us begin with the
case $ab > 0$.

\begin{proposition}\label{prp:N1}
If $ab > 0$ then there exists at least one geodesic that is incomplete.
\end{proposition}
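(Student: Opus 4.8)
The plan is to exhibit an explicit invariant line (an ``idempotent'' direction) on which the restriction of $E$ is a nonzero quadratic vector field, hence has non-complete solutions, and then lift this non-completeness back to the geodesic. First I would look for invariant lines of the foliation induced by $E$ of the form $\{(s\xi_1, s\xi_2, s\xi_3) : s \in \R\}$ for a fixed direction $(\xi_1,\xi_2,\xi_3) \neq 0$. A line through the origin spanned by $\xi$ is invariant precisely when $E(\xi)$ is proportional to $\xi$, i.e.\ when there is $\mu$ with $a\xi_2\xi_3 = \mu \xi_1$, $b\xi_1\xi_3 = \mu\xi_2$, $c\xi_1\xi_2 = \mu\xi_3$. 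Seeking a solution with all $\xi_i \neq 0$, multiply the three equations appropriately: from the first two, $a\xi_2^2\xi_3 = \mu\xi_1\xi_2 = (\mu^2/c)\xi_3$ (using the third), so $\xi_2^2 = \mu^2/(ac)$, and symmetrically $\xi_1^2 = \mu^2/(bc)$ and $\xi_3^2 = \mu^2/(ab)$. Since $c = a+b$ and $ab > 0$, the numbers $a,b,c$ all have the same sign, so $ab$, $bc$, $ca$ are all strictly positive; thus one can choose real nonzero $\xi_1,\xi_2,\xi_3$ (for any fixed $\mu \neq 0$, say $\mu = 1$) solving these, with signs adjusted so that the original three equations hold simultaneously. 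This produces an explicit invariant line $\ell$.

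Next I would restrict $E$ to $\ell$. Parametrizing $\ell$ by $z = s\xi$, the vector field becomes $\dot{s}\,\xi = E(s\xi) = s^2 E(\xi) = s^2 \mu\, \xi$, so along $\ell$ the dynamics reduces to the scalar ODE $\dot{s} = \mu s^2$ with $\mu \neq 0$. Its solutions $s(t) = s_0/(1 - \mu s_0 t)$ blow up in finite time for any $s_0 \neq 0$, so the integral curve of $E$ starting at any nonzero point of $\ell$ is defined only on a bounded interval. Since, by Corollary~\ref{cor: lax-pair} (combined with the earlier identification of geodesics with integral curves of $\dot z = [z,\Phi^{-1}z]$ via $z = \Phi x$), these integral curves are in one-to-one correspondence with geodesics of $q$ on $\SLR$, a geodesic whose associated Lax curve lies on $\ell$ is incomplete. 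One should note that because $\gamma$ is obtained by left-translating $x(t) = \Phi^{-1}z(t)$ and then integrating $\dot\gamma = \gamma\, x$, the finiteness of the domain of $z$ forces the domain of $\gamma$ to be finite as well — the time reparametrization between the base curve and the Lax curve is the identity, so no domain can be gained. This is worth a sentence but is not a genuine difficulty.

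I expect the main obstacle to be purely bookkeeping: checking that the signs of the $\xi_i$ can be chosen \emph{consistently} so that all three defining equations $a\xi_2\xi_3 = \mu\xi_1$, etc., hold at once, rather than merely the squared versions. Concretely, once $|\xi_i|$ are fixed by $\xi_1^2 = \mu^2/(bc)$, $\xi_2^2 = \mu^2/(ac)$, $\xi_3^2 = \mu^2/(ab)$, one has freedom in three signs $\varepsilon_i = \sign \xi_i$, and the three equations impose the conditions $\varepsilon_1 = \sign(a)\,\varepsilon_2\varepsilon_3\,\sign(\mu)$ and cyclic permutations; since $a,b,c$ share a sign, these reduce to a single consistent constraint that can always be met (for instance, with $\mu$ of the common sign of $a,b,c$ one checks $\varepsilon_1=\varepsilon_2=\varepsilon_3=1$ works). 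Everything else — the reduction to $\dot s = \mu s^2$, the finite blow-up time, and the transfer to geodesics via the Lax correspondence — is routine. I would also remark, for use in the subsequent finer analysis, that in fact one typically gets several such idempotent lines (generically four up to sign), each carrying incomplete geodesics, but for the present proposition a single line suffices.
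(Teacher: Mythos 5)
Your proposal is correct and is essentially the paper's own argument: the paper also proves this by exhibiting an idempotent line — the invariant straight line ``above'' the singular point $\left(\sqrt{a/c},\sqrt{b/c},0\right)$ at infinity, which is exactly your direction $\left(\sqrt{a/c},\sqrt{b/c},1\right)$ up to scaling — on which the Euler-Arnold flow blows up in finite time, and then transfers incompleteness to the corresponding geodesic. The only difference is presentational: the paper carries out the computation in an affine chart near the plane at infinity of $\RR\PP(3)$ (the line crosses $\Delta_{\infty}$ in finite time because the restricted vector field is a nonzero constant there), while you reduce directly in $\R^3$ to $\dot{s}=\mu s^2$; these are the same idempotent argument.
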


\begin{proof}
Recall that the geodesics are the integral curves of the system of differential equations associated with the Euler-Arnold vector
field $E$. Since $E$ is a polynomial vector field on $\RR^{3}$, it then follows that $E$ admits a rational extension to $\RR\PP(3)$,
the compactification of $\RR^{3}$ by adjuntion of the plane at infinity $\Delta_{\infty} = \RR\PP(3) \setminus \RR^3$. The singular
foliation $\fol$ induced by this extension on $\RR\PP(3)$ is however analytic since, locally, the foliation is still represented by a
polynomial vector field. Furthermore, $\Delta_{\infty}$ is invariant for the foliation in question since $E$ is a homogeneous vector
field distinct from a multiple of the radial vector field.

Let us consider affine coordinates $(x_1,x_2,x_3)$ nearby $\Delta_{\infty}$. More precisely, consider the affine coordinates
$(x_1,x_2,x_3)$ related to $(z_1,z_2,z_3)$ through the map $\Psi$ on $\RR^3 \setminus \{x_3 = 0\}$ defined by
\begin{equation}\label{eq:x-chart}
\Psi(x_{1},x_{2},x_{3}) = \left( \frac{x_{1}}{x_{3}}, \frac{x_{2}}{x_{3}}, \frac{1}{x_{3}} \right) = (z_{1},z_{2},z_{3})
\end{equation}
Since the vector field $E$ will be represented in several different coordinates, it is convenient to denote some of these representations
by a different letter so as to avoid misunderstanding. In particular, let $X$ denote the vector field $E$ in the coordinates $(x_{1},x_{2},
x_{3})$ so that
\begin{equation}\label{eq:X_VF}
X = \frac{1}{x_{3}} \left[ x_{2}(a-cx_{1}^{2}) \dd{x_{1}} + x_{1}(b-cx_{2}^{2}) \dd{x_{2}} - cx_{1}x_{2}x_{3} \dd{x_{3}} \right] \, .
\end{equation}
Recalling that $c = a+b$,
the fact that $ab > 0$ ensures that all of $a$, $b$ and $c$ have the same sign. Therefore, it can easily be checked that the intersection
of the singular set of $\fol$ with the plane at infinity, in the considered affine chart, is constituted by $5$ singular points, namely
\[
(0,0,0), \, \, \, \left(\sqrt{a/c},\sqrt{b/c},0\right), \, \, \, \left(\sqrt{a/c},-\sqrt{b/c},0\right), \, \, \,
\left(\sqrt{a/c},-\sqrt{b/c},0\right) \, \, \, \left(-\sqrt{a/c},-\sqrt{b/c},0\right) \, .
\]
The straight line ``above'' the singular point $(0,0,0)$, i.e. the line given by $\{x_1 = x_2 = 0\}$, is contained in the singular set of
$X$. The same, however, does not occur with respect to the lines above the remaining singular points. The restriction of $X$ to the straight
line ``above'' $(\sqrt{a/c}, \sqrt{b/c}, 0)$, i.e. to the line $L = \{x_1 = \sqrt{a/c}, \, x_2 = \sqrt{b/c} \}$, is the constant vector field
$X_{L} = -c \sqrt{a/c} \sqrt{b/c} \, \partial /\partial{x_3}$. In particular, $X_L$ is regular at the origin, which means that $L$ ``crosses''
the plane at infinity, given by $\{x_3 = 0\}$, in finite
time. The corresponding geodesic is therefore incomplete and the result follows.
\end{proof}

Consider now the case $ab \leq 0$ where the following can be proved.

\begin{proposition}\label{prp:S1}
If $ab \leq 0$ then all geodesics are complete.
\end{proposition}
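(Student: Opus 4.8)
The strategy is to exhibit a proper (compact) positive-definite quadratic first integral for the vector field $E$ when $ab \le 0$. Indeed, Corollary~\ref{cor: lax-pair} (specialized via Lemma~\ref{lm:A}) provides the two quadratic first integrals of the Lax-pair system, which in the coordinates $(z_1,z_2,z_3)$ take the form $F(z) = z_1^2 + z_2^2 - z_3^2$ (the quadratic form $B$) and $G(z) = \nu_1 z_1^2 + \nu_2 z_2^2 - \nu_3 z_3^2$ (the form $B(\cdot, \Phi^{-1}\cdot)$). Any linear combination $\alpha F + \beta G$ is again a first integral, so it suffices to find $\alpha,\beta$ making $\alpha F + \beta G$ positive definite. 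Writing out the coefficients, we need $\alpha + \beta\nu_1 > 0$, $\alpha + \beta\nu_2 > 0$ and $-\alpha - \beta\nu_3 > 0$, i.e. $\alpha + \beta\nu_3 < 0$. Recalling $a = \nu_2 - \nu_3$ and $b = \nu_3 - \nu_1$, the condition $ab \le 0$ says precisely that $\nu_3$ lies (weakly) between $\nu_1$ and $\nu_2$; one then checks that such $\alpha, \beta$ exist whenever $\nu_3$ is strictly between $\nu_1$ and $\nu_2$, or more generally whenever $\nu_1 \ne \nu_2$ and $\nu_3 \in [\min, \max]$ — the point being that one can choose $\beta$ of the appropriate sign and then $\alpha$ in the nonempty interval it determines. (The degenerate subcases, where two of the $\nu_i$ coincide or the system partly degenerates, are handled separately and are easier: there $E$ has fewer genuinely nonlinear terms and the invariant level sets are directly seen to be bounded, or the flow is linear along the relevant directions.)

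Once such a positive-definite quadratic first integral $Q = \alpha F + \beta G$ is in hand, the argument concludes quickly. Every integral curve $\varphi(t)$ of $E$ lies on a level set $\{Q = Q(\varphi(0))\}$, which is an ellipsoid, hence a compact subset $\mathcal{K} \subset \RR^3$. A polynomial (in particular, locally Lipschitz) vector field on $\RR^n$ whose integral curves remain in a fixed compact set is complete: if the maximal interval of definition $]t_-, t_+[$ of $\varphi$ had $t_+ < \infty$, then since $\varphi$ stays in $\mathcal K$ and $E$ is bounded on $\mathcal K$, the curve $\varphi$ would extend continuously (indeed $C^1$) to $t_+$, contradicting maximality by the standard existence theorem applied at the limit point; likewise for $t_-$. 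Hence the integral curves of $E$ are all complete, and so, via Theorem~\ref{Thm: EA-equation} and Corollary~\ref{cor: lax-pair}, so are all the geodesics of $q$.

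The main obstacle — really the only subtle point — is making sure the positive-definiteness linear inequalities are genuinely solvable in every subcase of $ab \le 0$, and not merely in the generic one. When $\nu_1 = \nu_2$ the two first integrals $F$ and $G$ are proportional on the relevant subspace and one cannot separate a positive-definite combination by this linear trick alone; but in that situation $a = \nu_2 - \nu_3 = -(\nu_3 - \nu_2) = -b$, so $E$ simplifies (the coefficients become $a, -a, 0$) and the $z_3$-component is constant along the flow, reducing the dynamics to a planar linear system on each slice $\{z_3 = \text{const}\}$ whose orbits are ellipses or, at worst, bounded — either way complete. One should also record the boundary case $ab = 0$, where one of $a, b$ vanishes: then $E$ has a component identically zero and again a coordinate is a first integral, collapsing the problem to lower dimension. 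I would organize the proof as: (i) state the two quadratic first integrals in coordinates; (ii) in the generic case $\nu_1, \nu_2, \nu_3$ distinct with $ab < 0$, solve the inequalities for $(\alpha,\beta)$ and produce the ellipsoidal level sets; (iii) dispatch the degenerate subcases by the reductions just described; (iv) invoke the compactness-implies-completeness argument uniformly.
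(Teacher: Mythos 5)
Your overall strategy is exactly the paper's: combine the two quadratic first integrals $F$ and $G$ of the Lax system into a positive definite first integral when the sign condition holds, treat $ab=0$ by the reduction $\dot z_1=0$ to a linear planar system, and conclude completeness from compactness of the level sets. However, the written justification of the key step contains a genuine error: your translation of the hypothesis is inverted. With $a=\nu_2-\nu_3$ and $b=\nu_3-\nu_1$ one has $ab=(\nu_2-\nu_3)(\nu_3-\nu_1)>0$ exactly when $\nu_3$ lies strictly \emph{between} $\nu_1$ and $\nu_2$; hence $ab<0$ means $\nu_3<\min\{\nu_1,\nu_2\}$ or $\nu_3>\max\{\nu_1,\nu_2\}$, not that ``$\nu_3$ lies (weakly) between $\nu_1$ and $\nu_2$'' as you assert. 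Correspondingly, your solvability claim is false: the inequalities $\alpha+\beta\nu_1>0$, $\alpha+\beta\nu_2>0$, $\alpha+\beta\nu_3<0$ ask for an affine function of $\nu$ that is positive at $\nu_1,\nu_2$ and negative at $\nu_3$; an affine function positive at two points is positive on the whole segment joining them, so no such $(\alpha,\beta)$ exists when $\nu_3$ is between $\nu_1$ and $\nu_2$. Indeed, if your claim were true it would produce a compact positive definite first integral precisely in the case $ab>0$, contradicting Proposition~\ref{prp:N1}, where the metric is shown to be incomplete. The two reversals happen to cancel at the level of the plan (the trick does work when $ab<0$, which is the actual hypothesis), but as written the argument asserts solvability in exactly the wrong region and must be corrected: for $ab<0$ choose $\beta>0$ when $\nu_3<\min\{\nu_1,\nu_2\}$ and $\beta<0$ when $\nu_3>\max\{\nu_1,\nu_2\}$, then pick $\alpha$ so that $\alpha+\beta\nu_3<0<\alpha+\beta\nu_i$, $i=1,2$ — which is what the paper does.

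A minor further remark: your ``degenerate'' subcase $\nu_1=\nu_2$ is not actually an obstruction to the linear trick — if $\nu_1=\nu_2\neq\nu_3$ an affine function separating $\nu_3$ from $\nu_1$ exists and the same combination works — though your alternative reduction (there $c=0$, so $z_3$ is constant and the flow is linear in $(z_1,z_2)$) is also correct and harmless. The compactness-implies-completeness step and the $ab=0$ reduction are fine and agree with the paper.
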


\begin{proof}
Consider first the case where $ab = 0$. It can then be assumed without loss of generality that $a = 0$, which is equivalent to saying
that the eigenvalues $\nu_2$ and $\nu_3$ coincide. The first equation of the differential system~(\ref{sist_EA}) reduces then to
$\dot{z}_1 = 0$, which means that $z_1(t) = k$, with $k \in \RR$, for all $t \in \RR$. Thus, the Euler-Arnold differential system
reduces to a linear system in the variables $z_2, \, z_3$, namely
\begin{align*}
\left(\begin{matrix}\dot{z}_{2} \cr \dot{z}_{3}\end{matrix}\right)=
\left(\begin{matrix}
bk z_{3}\cr
ck z_{2}
\end{matrix}\right) \, ,
\end{align*}
which is clearly complete.

Assume now that $ab < 0$. This is equivalent to having $\nu_3 < {\rm min} \{\nu_1, \nu_2\}$ or $\nu_3 > {\rm max} \{\nu_1,\nu_2\}$.
Next, recall that $I_1(z) = B(z,z)$ and $I_2(z) = B(z, \Phi^{-1}z)$ are first integrals for the Lax-pair equations. In particular, so
is $\lambda I_1 + \beta I_2$ for every $\lambda, \, \beta \in \RR$. By using the fact that $v = (v_k)$ is a $B$-orthonormal basis, we
get that
\[
I_1(z_1,z_2,z_3) = z_1^2 + z_2^2 - z_3^2 \qquad \text{and} \qquad I_2(z_1,z_2,z_3) = \nu_1 z_1^2 + \nu_2 z_2^2 - \nu_3 z_3^2 \, .
\]
Assume that $\nu_3 < {\rm min} \{\nu_1, \nu_2\}$ (resp. $\nu_3 > {\rm max} \{\nu_1,\nu_2\}$). Then, we have that, for every $\beta > 0$
(resp. $\beta < 0$), $\lambda + \beta \nu_3$ is smaller than both $\lambda + \beta \nu_1$ and $\lambda + \beta \nu_2$. In particular,
$\lambda$ and $\beta$ can be chosen so that $\lambda + \beta \nu_3$ is negative, while $\lambda + \beta \nu_1$ and $\lambda + \beta
\nu_2$ are both positive. For such parameters $\lambda$ and $\beta$, we have that $\lambda I_1 + \beta I_2$ is a quadratic positive
definite first integral, which implies that all geodesics are contained in a compact part of $\RR^3$. The geodesics are therefore
complete and the result follows in that case.
\end{proof}

Note that the existence of a quadratic positive definite first integral does not prevent the existence of invariant straight lines
through the origin for $\fol$. Nonetheless, if such invariant lines exist, they must be contained in the singular set of $X$ so that
every single point of the straight line in question is a geodesic. A left-invariant pseudo-Rimannian metric as above is then complete
if and only if the Euler-Arnold system does not admit idempotents (cf. \cite{B-M}).

It is known since \cite{B-M} that the metrics associated to diagonalizable isomorphisms are geodesically incomplete in the case where $ab
> 0$ and the proof goes as above, i.e. they proved that a particular geodesic is incomplete. The completeness of the remaining leaves has
not been discussed and its study is the content of the rest of this section. More precisely, in what follows we will determine the maximal
domain of definition of every single geodesic for these incomplete metrics. The approach that we will follow is based on a method introduced
in~\cite{RR_Applications} to investigate the maximal domain of definition in $\C$ of the solutions of a complex differential equation. The
method goes as follows.

To begin with, recall that the geodesics are in correspondence with the leaves of the foliation $\fol$ associated with the Euler-Arnold
vector field $E$. Since $E$ is polynomial, it admits a rational extension to $\RR\PP(3)$. Furthermore, being  $E$ a homogeneous vector
field distinct from a multiple of the radial vector field, the plane at infinity is invariant by $\fol$. The homogeneity of $E$ ensures
the existence of invariant cones for $\fol$ through the origin, and each one of these cones cuts $\Delta_{\infty}$ along a curve that
is a leaf for the foliation induced by the extension of $E$ to $\Delta_{\infty}$. We start by describing the foliation induced by this
extension to $\Delta_{\infty}$. Next, we study the foliation over the invariant ``cones'' above every single leaf of the foliation on
$\Delta_{\infty}$. More precisely, for every single leaf on a fixed cone, we will study the ``height'' function of the leaf with respect
to the plane at infinity. If the ``height'' function is bounded from below by a strictly positive constant, then the leaf remains in a
compact part of $\RR^3$ and, then, the corresponding geodesic is naturally complete. If the ``height'' function can be made arbitrarily
close to zero (i.e. the associated geodesic escapes from every compact subset of $\RR^3$) we will decide on the completeness or incompleteness
of the geodesic by taking the integral of the associated time-form along the leaf. The time-form induced by $E$ on a leaf $L$ is the $1$-form
$dT$ satisfying $dT.X|_L \equiv 1$. Furthermore, if $p, q$ are two points on a same leaf and $c$ is the path joining these two points, then
$\int_c dT$ measures the time to go from $p$ to $q$. We will then investigate the convergence of the integral of the time-form over the entire
leaf to decide on its completeness (cf. \cite{RR_Applications} for more details).


\subsection{Study of geodesics for $ab > 0$}

In this subsection we will determine the maximal domain of definition of every single geodesic, in the case $ab > 0$, by following the approach
previously described. As previously mentioned, we may start by describing the foliation induced by $E$ on $\Delta_{\infty}$. Let us then
fix one of the charts nearby $\Delta_{\infty}$, namely let us consider affine coordinates $(x_1,x_2,x_3)$ related to $(z_1,z_2,z_3)$ through the
map $\Psi$ on $\RR^3 \setminus \{x_3 = 0\}$ defined by~(\ref{eq:x-chart}). It has already been mentioned that the vector field $E$ in the coordinates
$(x_{1},x_{2},x_{3})$ is given by~(\ref{eq:X_VF})
\[
X = \frac{1}{x_{3}} \left[ x_{2}(a-cx_{1}^{2}) \dd{x_{1}} + x_{1}(b-cx_{2}^{2}) \dd{x_{2}} - cx_{1}x_{2}x_{3} \dd{x_{3}} \right] \, .
\]
A representative vector field for $\fol_{\infty}$, the foliation induced by $\fol$ on $\Delta_{\infty}$, is naturally given by
\begin{align}\label{eq:X_infty}
X_{\infty} = x_{2}(a-cx_{1}^{2}) \dd{x_{1}} + x_{1}(b-cx_{2}^{2}) \dd{x_{2}} \, .
\end{align}
We claim that $X_{\infty}$ admits a non-constant first integral. In fact, although the first integrals $I_1$ and $I_2$ for $\fol$, given in
the affine coordinates $(x_1,x_2,x_3)$ of $\RR^3$ by $I_{1}(x_1,x_2,x_3) = (x_{1}^{2}+x_{2}^{2}-1)/x_{3}^{2}$ and $I_{2}(x_1,x_2,x_3) = (\nu_{1}x_{1}^{2}+\nu_{2}x_{2}^{2}-\nu_{3})/x_{3}^{2}$, are not defined over $\Delta_{\infty}$, their quotient is and it naturally corresponds
to a first integral for $\fol_{\infty}$
\begin{equation}\label{eq:first_integral_I_1}
I(x_1,x_2,x_3) = \frac{x_{1}^{2}+x_{2}^{2}-1}{\nu_{1}x_{1}^{2}+\nu_{2}x_{2}^{2}-\nu_{3}} \, .
\end{equation}
The leaves of $\fol_{\infty}$ are then contained in the level sets of $I$, which naturally satisfy
\begin{equation}\label{eq:Q1Int1}
(1 - K\nu_{1}) x_{1}^{2} + (1 - K\nu_{2}) x_{2}^{2} = 1 - K \nu_{3} \, ,
\end{equation}
for $K \in \R$. Each leaf is then contained in a certain conic and the type of the conic in the present coordinates depend on the
value of $K$. To completely describe the leaves of $\fol_{\infty}$ on $\Delta_{\infty} \simeq \RR\PP(2)$, two more charts should
be considered. So, let $(y_1, y_2)$ and $(u_1,u_2)$ be affine coordinates related with $(x_1,x_2)$ as follows
\begin{align*}
\psi (y_1,y_2)& = \left( \frac{y_1}{y_2}, \frac{1}{y_2} \right) = (x_1, x_2)\\
\phi (u_1,u_2)& = \left( \frac{1}{u_1}, \frac{u_2}{u_1} \right) = (x_1, x_2)
\end{align*}
A vector field representing $\fol_{\infty}$ in the affine coordinates $(y_1,y_2)$ is then given by
\begin{align*}
Y_{\infty} = y_{2}(a-by_{1}^{2}) \dd{y_{1}} + y_{1}(c-by_{2}^{2}) \dd{y_{2}}
\end{align*}
while, in the affine coordinates $(u_1,u_2)$ is given by
\begin{align*}
U_{\infty} = u_{2}(c-au_{1}^{2}) \dd{u_{1}} + u_{1}(b-au_{2}^{2}) \dd{u_{2}} \, .
\end{align*}
The foliation $\fol_{\infty}$ induced by the Euler-Arnold vector field on $\Delta_{\infty}$ has then a total of 7 singular points,
namely $p_1, \, p_2, \, p_3$ and $p_4$ given, respectively, in the affine coordinates $(x_1,x_2,x_3)$ by
\[
\left(\sqrt{a/c},\sqrt{b/c},0\right), \, \, \, \left(-\sqrt{a/c},\sqrt{b/c},0\right), \, \, \,
\left(-\sqrt{a/c},-\sqrt{b/c},0\right) \, \, \, \left(\sqrt{a/c},-\sqrt{b/c},0\right) \, .
\]
and $q_x, \, q_y$ and $q_u$ standing, respectively, for the origin of the affine coordinates $(x_1,x_2)$, $(y_1,y_2)$ and $(u_1,u_2)$.
The dynamics of $\fol_{\infty}$ on each one of the affine coordinates is summarized in the picture below.

$$\begin{array}{ccccc}
\includegraphics[width=0.3\textwidth]{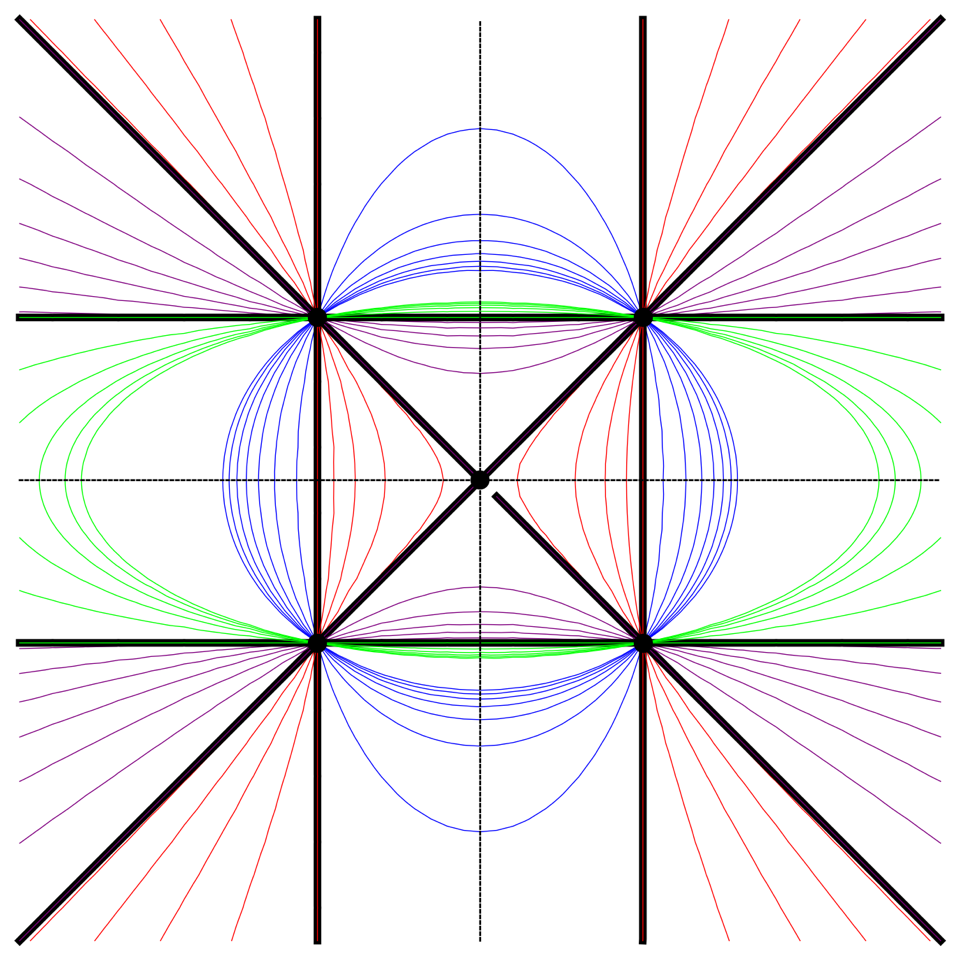} & \; \; &
\includegraphics[width=0.3\textwidth]{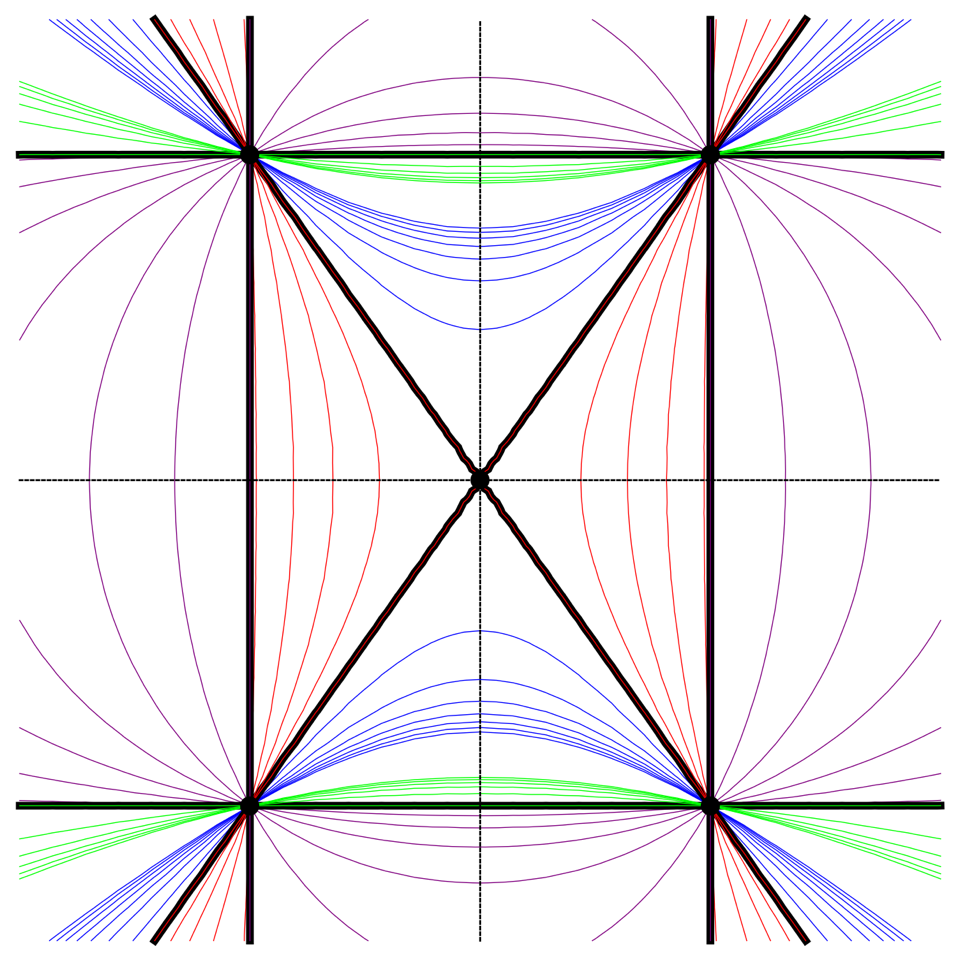} & \; \; &
\includegraphics[width=0.3\textwidth]{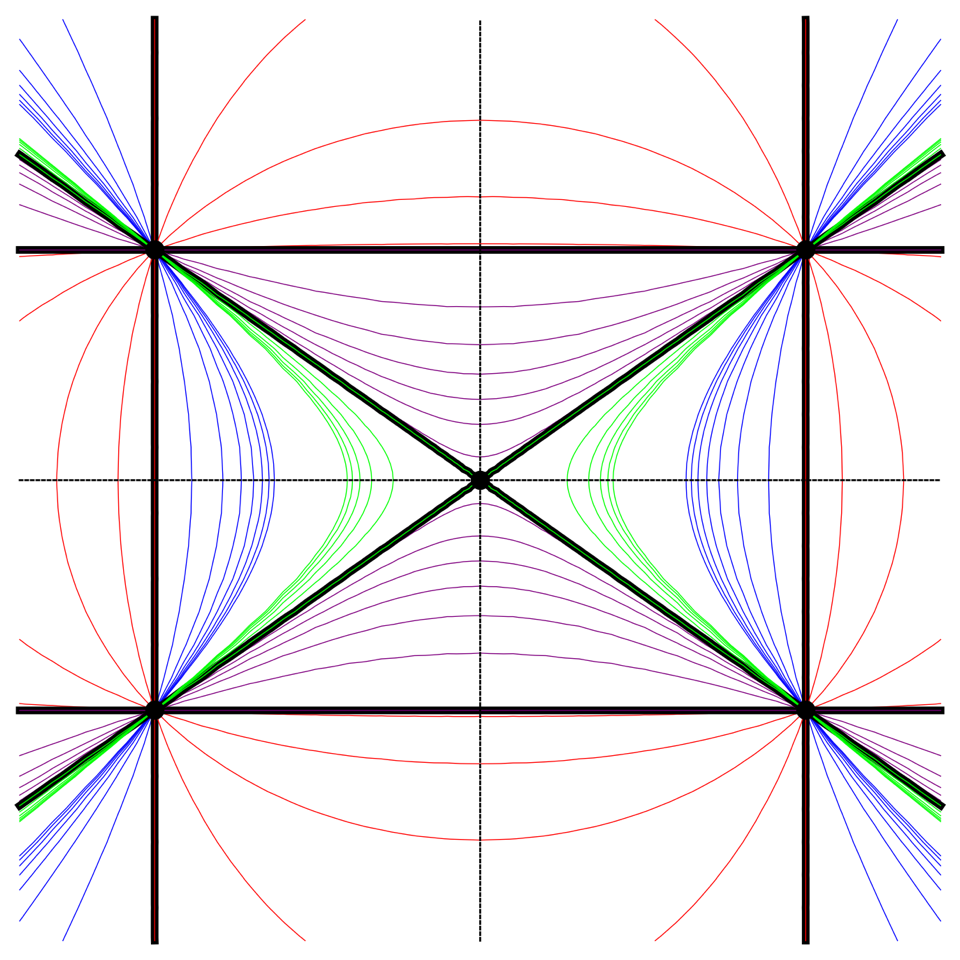}  \cr
 \text{Dynamics on $(x_1,x_2)$} & \; \; &
 \text{Dynamics on $(y_1,y_2)$} & \; \; &
 \text{Dynamics on $(u_1,u_2)$}
\end{array}$$

Recall just that the ``cone above'' every leaf of $\fol_{\infty}$ is invariant by $\fol$. The dynamics of the foliation over
these invariant cones play a role in the proof of the proposition below, where the maximal domain of definition of every single
geodesic is given.

\begin{proposition}\label{prop:description_complete_geodesics_case1}
Let $L$ be a leaf of $\fol$ and $L_{\infty}$ its projection on $\Delta_{\infty}$. The geodesic associated with $L$ is complete
if its projection coincides with one of the singular points $q_x, \, q_y$ or $q_u$ and is incomplete in all the other cases. An
incomplete geodesic is, in fact, $\R^+$ or $\R^-$ complete if and only if $L_{\infty}$ joins a singular point $p_i$ to a singular
point $q_j$ or if it reduces to one of the points $p_i$.
\end{proposition}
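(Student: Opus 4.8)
The plan is to follow the method outlined just before the statement: a leaf $L$ of $\fol$ sits inside the cone over a leaf $L_\infty$ of $\fol_\infty$, so we must decide, leaf by leaf on $\Delta_\infty$, whether the associated ``height'' function stays bounded away from zero (giving completeness by confinement to a compact part of $\RR^3$) or can be driven to zero, and in the latter case evaluate the time-form integral along $L$. I would organize the argument by the three cases for $L_\infty$ dictated by the dynamics of $\fol_\infty$: (i) $L_\infty$ is one of the singular points $q_x,q_y,q_u$; (ii) $L_\infty$ reduces to a point $p_i$ or is a separatrix joining some $p_i$ to some $q_j$; (iii) $L_\infty$ is any other leaf (a closed conic, or a curve joining two $p_i$'s, according to the pictures). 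Throughout I would exploit the quadratic positive-semidefinite combinations of the first integrals $I_1,I_2$ from Corollary~\ref{cor: lax-pair} to control where $L$ can go, and the homogeneity of $E$ to reduce from $L$ to $L_\infty$.

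\emph{Case (i): $L_\infty\in\{q_x,q_y,q_u\}$.} Here the cone over $L_\infty$ is a coordinate axis, e.g. $q_x$ corresponds to $\{z_1=z_2=0\}$ in the original coordinates (and $q_y$, $q_u$ to the other two axes after the obvious changes of chart). On each such axis the vector field $E$ from~(\ref{eq:EAVF1}) vanishes identically, so every point of the axis is a singularity; hence the geodesic through such a point is the constant curve, which is complete. This is the assertion of the first sentence and is immediate from the explicit form of $E$.

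\emph{Case (iii): generic $L_\infty$.} I would first show that, when $ab>0$, the quadratic form $\lambda I_1+\beta I_2$ can be chosen positive definite precisely when $L$ does \emph{not} lie over one of the axes $q_x,q_y,q_u$; more precisely, the level $\{I=K\}$ of~(\ref{eq:first_integral_I_1}) picks out which combination $\lambda I_1+\beta I_2$ vanishes on the corresponding cone, and for generic $K$ (the non-separatrix conics) this vanishing combination, together with a nearby positive-definite one, forces the height function to tend to $0$ along $L$. So the geodesic is \emph{not} contained in a compact set; by the time-form criterion its (in)completeness is decided by $\int_L dT$. The key computation is to pull back $dT$ to $L_\infty$ via the conical structure: writing $z=\rho(t)\,\xi(t)$ with $\xi(t)\in L_\infty$ and $\rho$ the height, homogeneity gives $\dot\xi = \rho\,E(\xi)$ and a scalar equation for $\rho$, so the time to traverse $L$ splits into a ``radial'' integral and the $\fol_\infty$-time along $L_\infty$. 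On a closed conic leaf of $\fol_\infty$ bounded away from the singular points the $\fol_\infty$-time around one loop is a finite nonzero constant, while the radial part is an integral of the form $\int d\rho/(\text{quadratic in }\rho)$ which diverges at neither end of a half-line but whose \emph{total} over the doubly-infinite range is finite at one or both ends — this is exactly the mechanism that makes the geodesic incomplete and, on a generic conic (which has no singular point on it), incomplete on \emph{both} sides, so neither $\RR^+$- nor $\RR^-$-complete. For an $L_\infty$ that joins two singular points $p_i,p_j$ (a separatrix of $\fol_\infty$ not passing through any $q$), the $\fol_\infty$-time is infinite (logarithmic divergence at each hyperbolic endpoint $p_i$), and I would show this makes $\int_L dT$ infinite on the corresponding side, hence the geodesic is $\RR^+$- or $\RR^-$-complete though not complete.

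\emph{Case (ii): $L_\infty$ is a point $p_i$ or a separatrix with one endpoint among the $q$'s.} If $L_\infty=\{p_i\}$ then the cone over $L_\infty$ is a straight line $L'$ through the origin, invariant by $\fol$, on which $E$ restricts to a nonzero quadratic vector field (this was already checked in the proof of Proposition~\ref{prp:N1}: $X|_L=-c\sqrt{a/c}\sqrt{b/c}\,\partz$); the scalar equation $\dot s = \kappa s^2$, $\kappa\neq0$, is incomplete at one end and complete at the other, so the geodesic is $\RR^+$- or $\RR^-$-complete but not complete — matching the ``reduces to one of the points $p_i$'' clause. If instead $L_\infty$ is a separatrix connecting a $p_i$ to a $q_j$, the $\fol_\infty$-time is infinite at the $q_j$-end (the $q_j$ are the non-hyperbolic vertices where orbits take infinite time to arrive) and finite at the $p_i$-end, so again exactly one side of $\int_L dT$ diverges, giving $\RR^\pm$-completeness without completeness.

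\emph{Main obstacle.} The delicate part is the asymptotic analysis of $\int_L dT$ near the ends, i.e. matching the \emph{radial} divergence/convergence of $\int d\rho/(\text{quadratic})$ against the \emph{transverse} $\fol_\infty$-time near the singular points $p_i$ and $q_j$; one must check that these combine to give a finite total time in precisely the cases claimed. Concretely, near a hyperbolic $p_i$ the transverse time behaves like $-\log(\text{distance})$ while the height $\rho$ stays bounded, and near $q_j$ the transverse dynamics is of the degenerate (center-type or resonant) kind read off from the pictures, where the time diverges; I would extract the leading behaviour of both factors by local normal forms at each singular point of $\fol_\infty$ and of $\fol$ on the invariant cone, and verify the bookkeeping. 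A secondary technical point is handling the three affine charts $(x_1,x_2),(y_1,y_2),(u_1,u_2)$ on $\Delta_\infty\simeq\RR\PP(2)$ coherently, so that every leaf $L_\infty$ and every singular point is covered exactly once and the case division above is genuinely exhaustive; the explicit vector fields $X_\infty,Y_\infty,U_\infty$ and the first integral~(\ref{eq:first_integral_I_1}) make this bookkeeping routine but it must be done carefully.
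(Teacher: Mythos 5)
Your cases (i) and (ii) (projection equal to one of $q_x,q_y,q_u$, or reduced to a point $p_i$) are fine and agree with the paper's argument. The genuine gap is in how you decide the convergence of $\int_L dT$ at the ends of a leaf lying over the points $p_i$, and it leads you to the wrong verdict in the generic case. You use the time of the induced foliation $\fol_{\infty}$ on $\Delta_{\infty}$ as a proxy for the time along $L$ (``logarithmic divergence at each hyperbolic endpoint $p_i$'', hence $\R^+$ or $\R^-$ completeness for leaves whose projection joins two points $p_i,p_j$). But the time-form of $E$ along $L$ is not the time of $\fol_{\infty}$: it carries the height factor, and since along the invariant cone the height behaves like a square root of the factor vanishing at $p_i$ (e.g. $x_3(x_1)=x_3^0\sqrt{(a-cx_1^2)/(a-c(x_1^0)^2)}$, and in the elliptic parametrization $x_3\propto\sqrt{|u(\theta)|}$ with $u$ having a simple zero at $\theta_1$), one gets $dT\sim C\,d\theta/\sqrt{|u(\theta)|}$, whose integral \emph{converges} at the endpoint even though the $\fol_{\infty}$-time diverges there. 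Consequently a leaf whose projection joins two points $p_i,p_j$ reaches $\Delta_{\infty}$ in finite time at \emph{both} ends: its maximal interval is bounded and it is neither $\R^+$ nor $\R^-$ complete. Your proposal asserts the opposite for exactly these leaves, contradicting the statement to be proved; this square-root estimate (the explicit linear ODE for the height on each cone plus the claim that $u$ vanishes to first order at $\theta_1$) is precisely where the paper's proof does the real work.

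Two subsidiary problems. First, in the regime $ab>0$ of this proposition every conic of the pencil $(1-K\nu_1)x_1^2+(1-K\nu_2)x_2^2=1-K\nu_3$ passes through the four points $p_i$, so $\fol_{\infty}$ has no closed leaves bounded away from its singular set; the ``closed conic'' subcase to which you attach the two-sided incompleteness mechanism is vacuous here (closed leaves occur when $ab<0$, where the geodesics are complete anyway), so your exhaustion of cases does not match the actual phase portrait. Second, when $ab>0$ no combination $\lambda I_1+\beta I_2$ is positive definite --- that is exactly the dichotomy of Theorem~\ref{thm:slR(S1)} --- so the argument you sketch to force the height to tend to $0$ cannot be run that way; the paper instead reads off the behaviour of the height from the explicit equation $dx_3/dx_1$ on each cone, which also gives the correct mechanism at the $q_j$ ends: there the height stays bounded away from zero and the leaf converges to a singular point of $E$ on the axis above $q_j$, whence completeness in that time direction.
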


\begin{proof}
Recall that the leaves of $\fol_{\infty}$ are contained in the level sets of the function $I$ on~(\ref{eq:first_integral_I_1})
and, consequently, their points satisfy Equation~(\ref{eq:Q1Int1}) for some $K$, which means that they are contained in a certain
conic in the affine coordinates $(x_1,x_2,x_3)$.

Let us first discuss the case where the projection of a leaf $L$ of $\fol$ coincides with one of the singular points of $\fol_{\infty}$.
Note that the argument to prove that the straight line ``above'' $p_1$ is an incomplete geodesic can also be applied to the straight lines
``above'' the singular points $p_2, \, p_3$ and $p_4$. The restriction of the Euler-Arnold vector field to the mentioned leaves is a (non-zero)
constant multiple of the vector field $x_3^2 \, \partial /\partial x_3$ so that the geodesics are, in fact, $\R^+$ or $\R^-$ complete. In turn,
the straight line ``above'' the singular point $q_x$, given by $l \simeq \{x_1 = x_2 = 0\}$, is contained in the singular set of $\fol$. The
restriction of $X$ to $l$ vanishes identically and, therefore, the geodesic passing through every single point of $l$ reduces to the point
itself and hence it is complete. The same holds for the straight line ``above'' the singular points $q_y$ and $q_u$.

Next, note that every single leaf of $\fol_{\infty}$ goes from one singular point of $\fol_{\infty}$ to another singular point of
$\fol_{\infty}$. Furthermore, since the vector field is homogeneous, $\fol$ has no singular points away from the above mentioned
straight lines. Thus, the only chance for a leaf $L$ of $\fol$ on $\RR^3$ (i.e. a geodesic) to approach the infinity, is when its
projection on $\Delta_{\infty}$ approaches one of the $7$ singular points. Therefore, in order to study the completeness of the
remaining geodesics, it is sufficient to study the domain of definition of the solutions nearby each one of the $7$ singular points.
In fact, it will be sufficient to consider the behavior of the solutions nearby the singular points $p_1$ and $q_x$ since the pull-back
of $X$ by rotation of angle $\pi$ on the variables $(x_1,x_2)$ or by reflection on the coordinate axes of affine coordinates $(x_1,x_2)$
coincides with $X$ or with $-X$, respectively. Furthermore, the expressions of $X_{\infty}, \, Y_{\infty}$ and $U_{\infty}$ nearby the
origin ensures as well that the behavior nearby $q_x$ is similar to the behavior nearby $q_y$ and $q_u$. So, let us fix once and for all
the affine coordinates $(x_1,x_2,x_3)$ nearby $\Delta_{\infty}$ and let us study the behavior and the domain of definition of the geodesics
accumulating at $p_1$ and/or $q_x$.

With respect to $p_x$, there are just four leaves of $\fol_{\infty}$ accumulating at the point in question, namely the straight
lines joining $q_x$ to each one of the singular points $p_1, \, p_2, \, p_3$ and $p_4$. In turn, from the symmetries previously
described, it suffices to consider the leaf $L_{\infty}$ joining $q_x$ to $p_1$. This leaf is parameterized by $H_0(x_1) = (x_1,
\sqrt{b/a} \, x_1, 0)$, $x_1 \in ]0, \sqrt{a/c}[$, so that a leaf $L$ on the cone above $L_{\infty}$ is parameterized by $H(x_1)
= (x_1, \sqrt{b/a} \, x_1, x_3(x_1))$ where $x_3$ satisfies
\[
\frac{dx_{3}}{dx_1} = \frac{-c x_1}{a - c x_1^2} \, x_{3} \qquad \qquad \text{so that} \qquad \qquad
x_{3}(x_1) = x_{3}^{0} \sqrt{\frac{a - c x_1^2}{a - c (x_1^0)^2} }
\]
Thus, for $L$ distinct from $L_{\infty}$, the absolute value of $x_3 = x_3(x_1)$ is bounded from below by a strictly positive
constant nearby $x_1 = 0$. Thus, the associated geodesic remains away from $\Delta_{\infty}$ (it goes to a singular point of
$\fol$ on the straight line ``above'' $q_x$) so that it is at least $\R^+$ or $\R^-$ complete, depending on weather the geodesic
is approaching or moving away from the singular point as $t$ increases. To prove that the geodesic is, in fact, incomplete we go
as follows. First, note that $x_3 = x_3(x_1)$ goes to zero when $x_1$ goes to the singular point $\sqrt{a}{c}$, which means that
$L$ approaches $\Delta_{\infty}$. Next, consider the time-form associated with $X$ along $L$, which is given by
\[
dT = \frac{x_3(x_1)}{x_2(x_1) (a-cx_1^2)} \, dx_1 = \frac{C}{x_1 \sqrt{|a-cx_1^2|}} \, dx_1 \, ,
\]
for some constant $C \in \R^{\ast}$. The time we take to go from a point $(x_1^0, x_2^0, x_3^0)$ to $p_1$ along $L$ is given by
$\int_{x_1^0}^{\sqrt{a/c}} \, dT$ and so, to decide wether the geodesic reaches $\Delta_{\infty}$ in finite time or not, we just
need to decide on the convergence or divergence of the mentioned integral. Since $a-cx_1^2$ may be written as $a(1-\sqrt{c/a} \,
x_1) (1+ \sqrt{c/a} \, x_1)$ and since $\int_{x_1^0}^{\sqrt{a/c}} \, \frac{1}{\sqrt{1-\sqrt{c/a} \, x_1}} \, dx_1$ converges,
there follows that $\int_{x_1^0}^{\sqrt{a/c}} \, dT$ converges as well and the corresponding geodesic is incomplete.

\smallbreak

From now on let us focus on the foliation nearby $p_1$. We claim first that the geodesics associated with the leaves in the invariant
plane $\{x_2 = \sqrt{b/a} \, x_1\}$ are $\R^+$ or $\R^-$ complete. In fact, this is the case for those where $x_1 \in ]0,\sqrt{a/c}[$.
As for the remaining leaves, the claim follows from similar calculations along with the previously mentioned symmetry arguments. Consider
then the invariant plane $\{x_2 = \sqrt{b/c}\}$ and assume first that $L_{\infty}$ is such that $-\sqrt{a/c} < x_1 < \sqrt{a/c}$. If $L$
is a leaf of $\fol$ away from $\Delta_{\infty}$ and projecting on $L_{\infty}$, then the variable $x_3$ satisfies
\[
\frac{dx_3}{dx_1} = - \frac{cx_1}{a-cx_1^2} \, x_{3} \qquad \qquad \text{so that} \qquad \qquad
x_3(x_1) = x_3^0 \sqrt{\frac{a - cx_1^2}{a - c(x_1^0)^2}} \ .\
\]
We have that $x_3$ goes to zero as $x_1$ goes to $\pm \sqrt{a/c}$, which means that $L$ approaches $\Delta_{\infty}$. Let us then check
that the associated geodesic reaches the infinity in finite time, so that it is incomplete. The time-form associated with $X$ along $L$
is given by
\[
dT = \frac{x_3(x_1)}{x_2(x_1)(a-cx_1^2)} \, dx_1 = \frac{C}{\sqrt{|a-cx_1^2|}} \, dx_1 \, ,
\]
for some constant $C \in \R^{\ast}$. Again, the time that we take to go from the point $(x_1^0, x_2^0, x_3^0)$ to $p_1$ along $L$ is
given by $\int_{x_1^0}^{\sqrt{a/c}} \, dT$. Again, this integral converges if and only if so does $\int_{x_1^0}^{\sqrt{a/c}} \,
\frac{1}{\sqrt{1-\sqrt{c/a} \, x_1}} \, dx_1$ and, since the later is clearly convergent, the associated geodesic is incomplete.
Taking into account the previously described symmetries, the geodesic is neither $\R^+$ nor $\R^-$ complete. In other words, the
maximal interval of definition is a bounded open interval.

The study of the remaining leaves on the invariant plane $\{x_2 = \sqrt{b/c}\}$ is similar, with the exception of the fact that their
projection on $\Delta_{\infty}$ escapes from every compact subset of the affine chart $(x_1,x_2)$. If $L$ is a leaf on this plane with
$x_1 > \sqrt{a/c}$, then the argument above allows us to say that $L$ reaches $\Delta_{\infty}$ in finite time when its projection
approaches the singular point $p_1$. In turn, to discuss $\R^+$ or $\R^-$ completeness of the leaf, either we consider the affine
charts $(w_1,w_2,w_3)$ related with $(z_1,z_2,z_3)$ through the map
\begin{equation}\label{eqn:chart-w}
\Lambda(w_1,w_2,w_3) = \left( \frac{1}{w_1}, \frac{w_2}{w_1}, \frac{w_3}{w_1} \right) = (z_1,z_2,z_3) \, ,
\end{equation}
and where the divisor at infinity is represented by $\{w_1 = 0\}$, or we study directly the time-form in the present coordinates.
Following the second approach, we may note that
\[
\int_{x_1^0}^{+\infty} \, dT = \int_{x_1^0}^{+\infty} \frac{C}{\sqrt{|a - cx_1^2|}} \, dx_1 = \int_0^{1/x_1^0} \frac{C}{s\sqrt{|as^2 - c|}} \, ds
\]
clearly diverges since $\sqrt{|as^2 - c|}$ is bounded in the considered interval. The leaf $L$ is then $\R^+$ or $\R^-$ complete.
The study of the leaves on the invariant plane $\{x_1 = \sqrt{a/c}\}$ is analogous.

Assume now that $L$ is a leaf whose projection $L_{\infty}$ is contained in an ellipse and accumulates at $p_1$. The leaf $L_{\infty}$
can thus be parameterized by $H(\theta) = (\alpha_1 \cos \theta, \alpha_2 \sin \theta)$, where $\alpha_i = \sqrt{\frac{1-K\nu_3}{1-K\nu_i}}$,
i=1,2, for some $K$ such that all $1-K\nu_i$, $i=1,2,3$, have the same sign.
Furthermore, we can assume that $\theta$ belongs to $]\theta_1,\theta_2[$, where $\theta_i$ is such that $H(\theta_i) = p_i$. The
cone $S$ above $L_{\infty}$ then admits a natural parametrization by $(\theta, x_3)$ and the pull-back of $X$ through the mentioned
parametrization is given by
\[
\frac{1}{x_3} \left[ \left( -\frac{a\alpha_2}{\alpha_1} + c \alpha_1\alpha_2 \cos^2 \theta \right) \dd{\theta} -
x_3 c \alpha_1 \alpha_2  \cos \theta \sin \theta \dd{x_3} \right] \ .
\]
Thus, it can easily be checked that
\[
x_{3}(\theta) = x_{3}^{0} \sqrt{\frac{-a\alpha_2 + c \alpha_1^2 \alpha_2 \cos^2 \theta}{-a\alpha_2 + c \alpha_1^2 \alpha_2 \cos^2 \theta_0}} \ .
\]
The height function $|x_3| = |x_3(\theta)|$ goes to zero as $\theta$ goes to $\theta_1$ (and to $\theta_2$ as well), which means that $L$
approaches $\Delta_{\infty}$. The time-form over $L$ is given by
\[
dT = \frac{C}{\sqrt{|-a \alpha_2 + c \alpha_1^2 \alpha_2 \cos^2\theta|}} \, d\theta \, ,
\]
for some $C \in \R^{\ast}$ (naturally assuming $L$ distinct from $L_{\infty}$). Now, we have the following.

\smallbreak

\noindent {\it Claim}: There exists $\varepsilon$, with $0 < \varepsilon < \theta_2 - \theta_1$, and $M > 0$ such that
\[
0 \leq M \left| \theta - \theta_1 \right| \leq  \left| -a \alpha_2 + c \alpha_1^2 \alpha_2 \cos^2\theta \right|
\]
for every $\theta \in ]\theta_1, \theta_1 + \varepsilon[$.

\begin{proof}[Proof of the Claim]
Consider the function $u(\theta) = -a \alpha_2 + c \alpha_1^2 \alpha_2 \cos^2\theta$. Since $u$ vanishes at $\theta = \theta_1$,
the claim immediately follows if $u'(\theta_1) \ne 0$. In fact, if this is the case, it suffices to take $M = |f'(\theta_1)|/2$).
Since $u'(\theta) = -2c \alpha_1^2 \alpha_2 \sin \theta \, \cos \theta$ and recalling that $\alpha_1 \cos \theta_1 = \sqrt{a/c}$ and
$\alpha_2 \sin \theta_1 = \sqrt{b/c}$, we get that $u'(\theta_1) = 2 \alpha_1 \sqrt{ab}$. The value of $u'(\theta_1)$ is clearly
non-zero and the result follows.
\end{proof}

As an immediate consequence of the previous claim, we have
\[
\int_{\theta_i}^{\theta_i + \varepsilon} \left| \frac{C}{\sqrt{|-a\alpha_2 + c \alpha_1^2 \alpha_2 \cos^2 s |}} \right| \, ds \, \leq \,
\int_{\theta_i}^{\theta_i+\varepsilon} \left| \frac{C}{\sqrt{ M |s - \theta_1|}} \right| ds \,  < \, \infty \ .
\]
In other words, the corresponding geodesic reaches infinity in finite time and hence the geodesic is incomplete. In fact, there follows
from the previously mentioned symmetries that the maximal domain of definition is an open interval.

The case where $L$ is a leaf whose projection $L_{\infty}$ is part of a hyperbola accumulating at $p_1$ is left to the reader - the
calculations for the case of an ellipse applies equally well to this case.
\end{proof}

Let us finish this section by making some comments on the dynamics of $\fol$ when $ab < 0$. Recall that, when $ab < 0$, the leaves
of $\fol$ are contained in a compact part of $\RR^3$. This is in contrast with the case where $ab > 0$, were geodesics escaping from
every compact subset of $\RR^3$ naturally appears associated with the so-called idempotents. In turn, an idempotent induces a singular
point for the foliation induced in $\Delta_{\infty}$. In the case where $ab < 0$ we have no idempotents. In fact, if $c \ne 0$, there
are only 3 singular points for $\fol_{\infty}$, namely the origin of each one of the three charts considered above. The straight
line ``above'' them, although invariant for $\fol$, is constituted by singular points of $\fol$. If $c = 0$, the origin of the affine
coordinates is the only singular point for $\fol$ and the straight line ``above'' it is also constituted by singular points.

Note that in the case where $c=0$, the leaves of $\fol_{\infty}$ in the affine coordinates $(x_1,x_2)$ are
nothing but circles centered at the origin and the line at infinity of the mentioned chart is a leaf as well. In particular, the
foliation has no singular points on the other charts. The leaves on the invariant cones are all closed from from Proposition~\ref{prp:S1}.
As for the case where $c \ne 0$
\begin{itemize}
\item[1.] the foliation $\fol_{\infty}$ has a center at the origin of the affine coordinates $(x_1, x_2)$;
\item[2.] the foliation $\fol_{\infty}$ has a center at the origin of the affine coordinates $(y_1, y_2)$ and a saddle at the origin of the
affine coordinates $(u_1, u_2)$ in the case where $ac < 0$;
\item[3.] the foliation has $\fol_{\infty}$ a saddle at the origin of the affine coordinates $(y_1, y_2)$ and a center at the origin of the
affine coordinates $(u_1, u_2)$ in the case where $ac > 0$;
\end{itemize}
The figure below exhibits the leaves of $\fol_{\infty}$ in the case where $0 < \nu_1 < \nu_2 < \nu_3$ where, in particular, we
have $ac < 0$.
\[
\begin{array}{ccccc}
\includegraphics[width=0.3\textwidth]{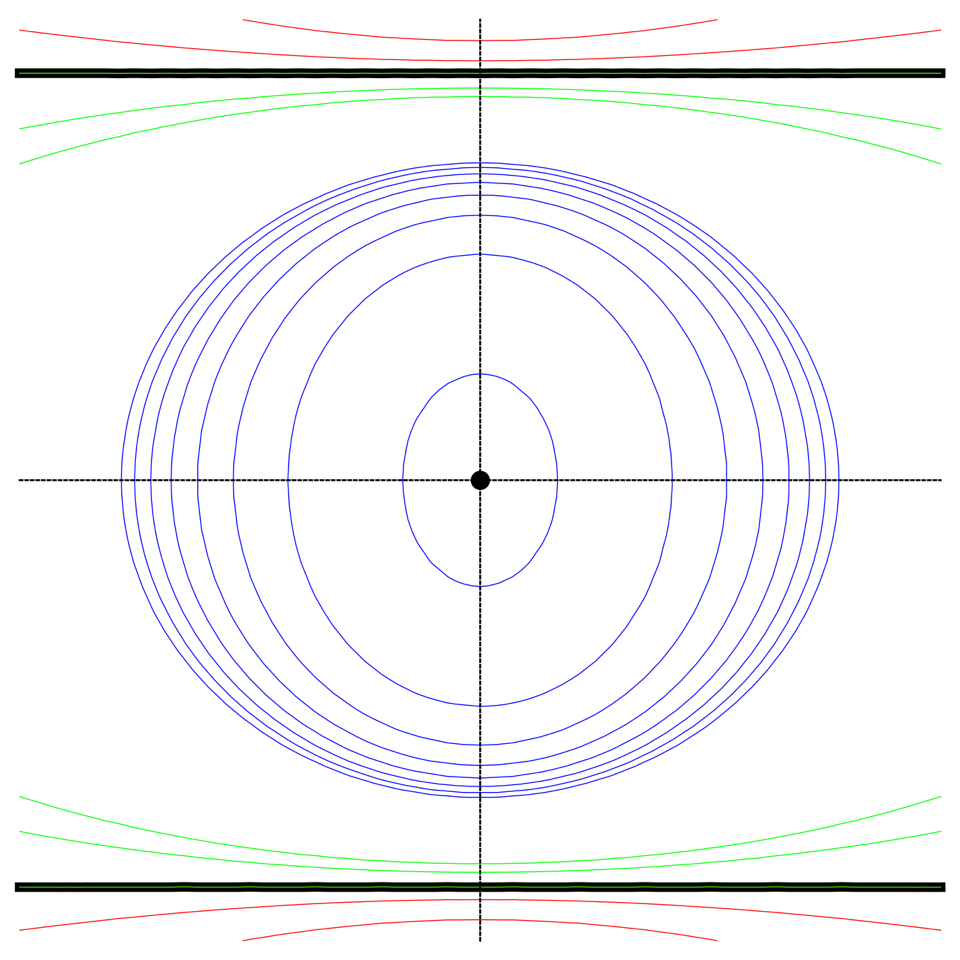} & \; \; &
\includegraphics[width=0.3\textwidth]{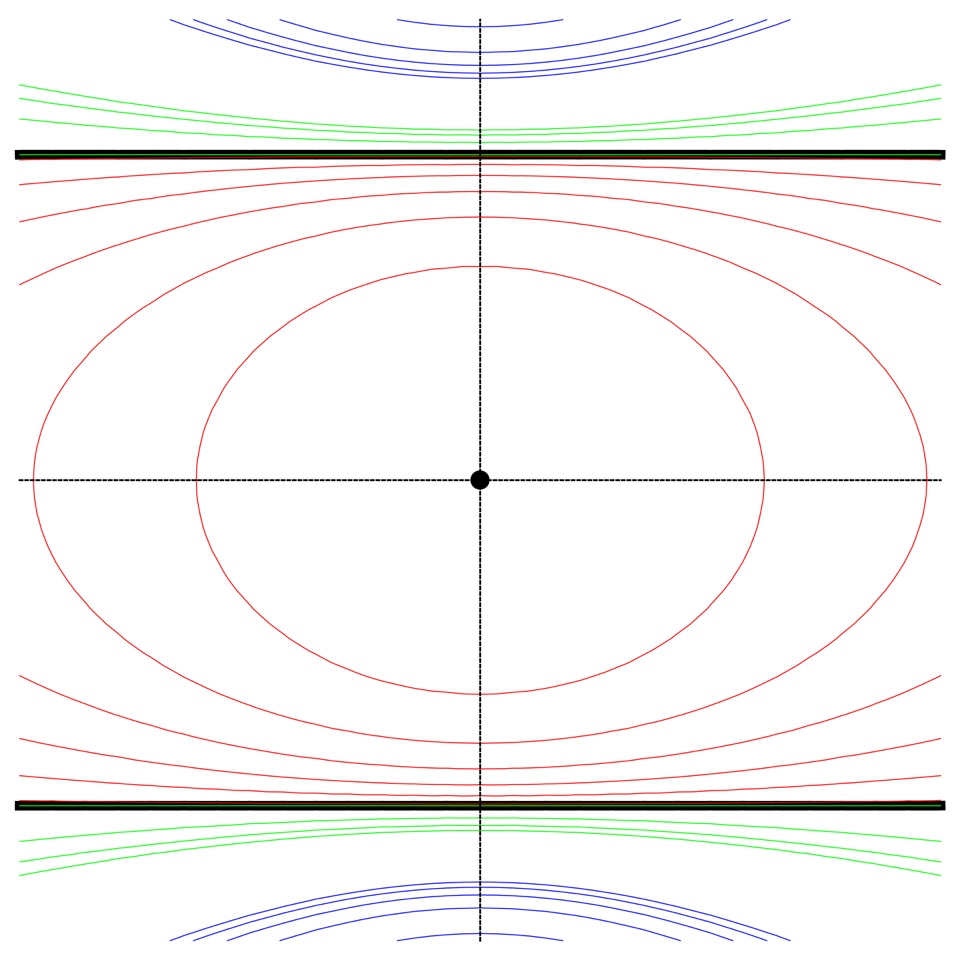} & \; \; &
\includegraphics[width=0.3\textwidth]{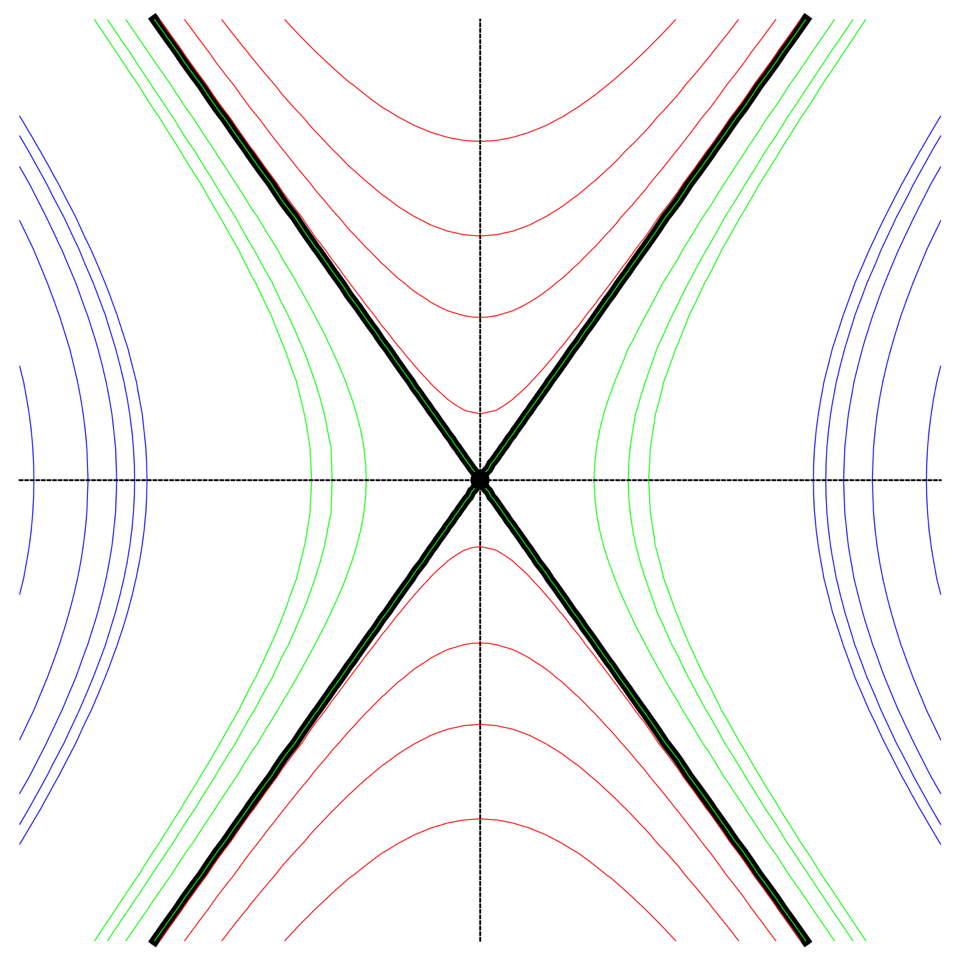}  \cr
 \text{Dynamics on $(x_1,x_2)$} & \; \; &
 \text{Dynamics on $(y_1,y_2)$} & \; \; &
 \text{Dynamics on $(u_1,u_2)$}
\end{array}
\]
Again, the leaves over each invariant cone are closed.


\section{Characterization of geodesics in case 3}\label{Sec:Case3}

Let us now consider the case where the isomorphism $\Phi$ admits a (real) eigenvalue $\lambda$ such that $m_a(\lambda) - m_g(\lambda) = 1$.
It has been proved in Lemma~\ref{lm:B} that there exists a $B$-pseudo-orthonormal basis $v = (v_k)$, satisfying $B(v_1,v_1) = B(v_2,v_3) = 1$
and $B(v_1,v_k) = B(v_k,v_k) = 0$ for $k=2,3$, with respect to which the isomorphism $\Phi$ can be written as in Section~\ref{sec: LP_equations},
where $\zeta \ne 0$ coincides with $q(v_3,v_3)$. In this case $\Phi^{-1}$ is given by
\begin{align}\label{eq:Phi-1-3}
\Phi^{-1} = \left(\begin{matrix} \eta & 0 & 0 \cr 0 & \nu & -\zeta\nu^{2} \cr 0 & 0 & \nu \end{matrix}\right)
\end{align}
where $\eta=1/\mu$ and  $\nu=1/\lambda$. As previously done, fix an element $z \in \slR$ and let $(z_1,z_2,z_3)$ stands for its coordinates
with respect to the basis $v = (v_k)$. From Lemma~\ref{lm:A} there follows that the Euler-Arnold vector field is written, in the affine
coordinates $(z_1, z_2, z_3) \in \RR^3$, as
\begin{align}\label{sist_EA3}
E = \zeta\nu^{2} z_3^2 \dd{z_1} + z_1 \left((\nu - \eta) z_2 - \zeta\nu^{2} z_3\right) \dd{z_2} + (\eta - \nu) z_1 z_3 \dd{z_3} \, .
\end{align}


\subsection{Characterization of geodesically complete metrics}

The characterization of the geodesically complete left-invariant pseudo-Riemannian metrics is given, in this case, not only in terms
of the eigenvalues of $\Phi$ (or, equivalently, $\Phi^{-1}$) but also in terms of $\zeta = q(v_3,v_3)$. To be more precise, the following
can be said.

\begin{teo}\label{thm:slR(S3)}
Then the left-invariant pseudo-Riemannian metric $q$ is geodesically complete if and only if $(\eta - \nu) \zeta \leq 0$.
\end{teo}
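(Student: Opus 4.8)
\textbf{Proof proposal for Theorem~\ref{thm:slR(S3)}.}

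The plan is to follow the same dichotomy used in Case~1: split into the regime $(\eta-\nu)\zeta \le 0$, where a quadratic positive-definite first integral confines all geodesics to a compact subset of $\RR^3$, and the regime $(\eta-\nu)\zeta > 0$, where I exhibit at least one incomplete geodesic, typically supported on a line (an idempotent) or in a low-dimensional invariant subset where the restricted vector field is quadratic homogeneous. First I would write down the two first integrals $I_1(z) = B(z,z)$ and $I_2(z) = B(z,\Phi^{-1}z)$ coming from Corollary~\ref{cor: lax-pair}, expressed in the pseudo-orthonormal coordinates $(z_1,z_2,z_3)$. Using $B(v_1,v_1) = B(v_2,v_3) = 1$ and the matrix~(\ref{eq:Phi-1-3}), these become $I_1 = z_1^2 + 2 z_2 z_3$ and $I_2 = \eta z_1^2 + 2\nu z_2 z_3 - \zeta\nu^2 z_3^2$, up to the normalization already fixed. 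The key combination to look at is $I_2 - \nu I_1 = (\eta - \nu) z_1^2 - \zeta \nu^2 z_3^2$, which is a diagonal quadratic form depending only on the sign of $(\eta-\nu)$ and of $-\zeta$ (note $\nu^2 > 0$). Then I would form a suitable linear combination $\alpha I_1 + \beta I_2$ and check that, precisely when $(\eta-\nu)\zeta \le 0$, one can choose $\alpha,\beta$ making it positive definite; this forces every integral curve of $E$ into a compact set and hence every geodesic is complete, by the same compactness-of-level-sets argument as in Proposition~\ref{prp:S1}.

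For the converse direction, when $(\eta-\nu)\zeta > 0$, I would look for an invariant line or invariant plane of $E$ on which the restricted flow blows up in finite time. Inspecting the vector field~(\ref{sist_EA3}), the plane $\{z_1 = 0\}$ is invariant and there $\dot z_1 = \zeta\nu^2 z_3^2$ need not vanish; more promisingly, one should search for invariant lines through the origin, i.e.\ directions $(z_1,z_2,z_3)$ on which $E$ is a nonzero multiple of the radial field. Passing to the chart at infinity (as in~(\ref{eq:x-chart})) one rewrites $E$ as a foliation $\fol_\infty$ on $\Delta_\infty \simeq \RR\PP(2)$, locates its singular points, and identifies those whose ``cone above'' is a genuine invariant line carrying a quadratic vector field $\propto x_3^2\,\partial/\partial x_3$; such a line reaches the plane at infinity in finite time, giving an incomplete geodesic. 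The sign condition $(\eta-\nu)\zeta > 0$ is exactly what guarantees the relevant singular point of $\fol_\infty$ to be real (rather than complex) and the restriction of $E$ to its cone to be nonvanishing — this is the analogue of ``$a$, $b$, $c$ all of the same sign'' in Case~1.

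The main obstacle I anticipate is the non-diagonalizable structure of $\Phi$: unlike Case~1, the Euler-Arnold field~(\ref{sist_EA3}) is genuinely non-symmetric and the natural coordinates are null coordinates, so the ``positive definiteness'' of $\alpha I_1 + \beta I_2$ must be checked on the quadratic form $\alpha(z_1^2 + 2z_2z_3) + \beta(\eta z_1^2 + 2\nu z_2 z_3 - \zeta\nu^2 z_3^2)$, whose signature in the $(z_2,z_3)$-block is not read off diagonally. I would diagonalize that $2\times 2$ block explicitly, track its eigenvalue signs as functions of $\alpha,\beta$, and pin down the exact range of parameters for which definiteness holds — confirming it is nonempty iff $(\eta-\nu)\zeta \le 0$. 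A secondary subtlety is the boundary case $(\eta-\nu)\zeta = 0$ (i.e.\ $\eta = \nu$, since $\zeta \ne 0$), where the first integral only degenerates to positive \emph{semi}definite; there I would argue separately that the system reduces to a lower-dimensional linear or otherwise manifestly complete system, exactly as $a=0$ was handled in the proof of Proposition~\ref{prp:S1}.
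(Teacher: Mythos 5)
Your treatment of the incomplete direction is essentially the paper's: for $(\eta-\nu)\zeta>0$ one passes to the chart at infinity, finds the two real singular points $p_{1,2}=\bigl(\pm\sqrt{b/a},-b/(2a),0\bigr)$ of $\fol_\infty$ (with $a=\eta-\nu$, $b=\zeta\nu^2$, so reality of $\sqrt{b/a}$ is exactly the sign condition), and observes that the invariant line above each carries a nonvanishing constant vector field in that chart, hence crosses $\Delta_\infty$ in finite time; this is correct as sketched. The boundary case $a=0$ is also handled as in the paper (reduction to a manifestly complete lower-dimensional system).

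The gap is in the completeness direction for $ab<0$. Your plan is to produce a positive definite combination $\alpha I_1+\beta I_2$ and conclude, as in Proposition~\ref{prp:S1}, that all integral curves stay in a compact set. No such combination exists. In the pseudo-orthonormal coordinates one has $\alpha I_1+\beta I_2=(\alpha+\beta\eta)z_1^2+2(\alpha+\beta\nu)z_2z_3-\beta\zeta\nu^2 z_3^2$, and the $(z_2,z_3)$-block of this quadratic form has determinant $-(\alpha+\beta\nu)^2\le 0$ for every choice of $\alpha,\beta$; so the form is never positive definite (when $\alpha+\beta\nu=0$ it is degenerate, with no $z_2$-dependence at all). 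The diagonalization check you propose would therefore return "empty for all parameters", not "nonempty iff $(\eta-\nu)\zeta\le 0$", and the compactness argument collapses. Indeed, in this case complete geodesics are in general \emph{not} contained in compact subsets of $\RR^3$, which is precisely why the paper's proof of Proposition~\ref{prp:S3} departs from Case~1: it uses the degenerate combination $I=\nu I_1-I_2=-az_1^2+bz_3^2$, which is positive definite only in the variables $(z_1,z_3)$, together with the key structural observation that the first and third equations of~(\ref{sist_EA3}) form a closed subsystem in $(z_1,z_3)$. Boundedness of that planar subsystem gives $(z_1(t),z_3(t))$ defined for all $t\in\RR$, and then the second equation is a first-order linear non-homogeneous ODE $\dot z_2=f(t)z_2+g(t)$ with coefficients defined on all of $\RR$, whose solution $z_2(t)=(G(t)+C)e^{F(t)}$ is global. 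Without this two-step argument (closed bounded $(z_1,z_3)$-subsystem, then linear ODE in $z_2$), your proposal does not establish completeness when $(\eta-\nu)\zeta<0$.
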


To prove Theorem~\ref{thm:slR(S3)} we may consider separately the cases where $(\eta - \nu) \zeta > 0$ and $(\eta - \nu) \zeta \leq 0$.
As in the previous case, we will prove the existence of the so-called idempotents in the case where $(\eta - \nu) \zeta > 0$. As to the
case where $(\eta - \nu) \zeta \leq 0$ the proof has to be different from the corresponding one in case where $\Phi$ is diagonalizable.
In fact, we will see that in this case the leaves although complete are not contained in a compact subset of $\R^3$. The proof of the
theorem is then divided into the two following propositions.

\begin{proposition}\label{prp:N3}
If $(\eta - \nu) \zeta > 0$ then there exists at least one geodesic that is incomplete.
\end{proposition}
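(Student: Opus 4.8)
The plan is to follow the strategy already used in the diagonalizable case (Proposition~\ref{prp:N1}): exhibit an idempotent, i.e.\ a straight line through the origin that is invariant for the foliation $\fol$ induced by $E$ on $\RR\PP(3)$ and along which $E$ restricts to a non-vanishing quadratic vector field. Such a line produces an incomplete solution, since a quadratic homogeneous ODE on $\RR$ blows up in finite time. Concretely, I would compactify $\RR^3$ by the plane at infinity $\Delta_\infty$, pass to an affine chart near $\Delta_\infty$ adapted to the problem, and locate a singular point of the foliation $\fol_\infty$ induced on $\Delta_\infty$ whose ``vertical'' line is \emph{not} contained in the singular set of $E$; the restriction of $E$ to that line will then be a nonzero multiple of $x_3^2\,\partial/\partial x_3$ and the corresponding geodesic will cross $\Delta_\infty$ in finite time.

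First I would write $E$ from~(\ref{sist_EA3}) in the affine chart $(x_1,x_2,x_3)$ given by $\Psi$ as in~(\ref{eq:x-chart}), where $\{x_3=0\}$ represents $\Delta_\infty$. Homogeneity of $E$ (it is quadratic and not a multiple of the radial field) guarantees that $\Delta_\infty$ is invariant, so $E$ has a representative of the form $\frac1{x_3}\,X$ with $X$ polynomial and tangent to $\{x_3=0\}$; setting $x_3=0$ gives the representative vector field $X_\infty$ for $\fol_\infty$ on the $(x_1,x_2)$-plane. Then I would solve $X_\infty=0$ to find the singular points in this chart. The key computation is to identify, among these, a singular point $p_\ast=(x_1^\ast,x_2^\ast,0)$ at which the restriction of $X$ to the vertical line $\{x_1=x_1^\ast,\,x_2=x_2^\ast\}$ does not vanish identically. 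For such a $p_\ast$ the restriction is $X|_{\ell}= \kappa\,x_3^2\,\partial/\partial x_3$ for a nonzero constant $\kappa$ depending on $\eta,\nu,\zeta$, and the hypothesis $(\eta-\nu)\zeta>0$ is exactly what should make $\kappa\neq0$ (and, one checks, locates the requisite singular point on $\Delta_\infty$). As in Proposition~\ref{prp:N1}, a nonzero $\kappa x_3^2\,\partial/\partial x_3$ means $\ell$ crosses $\Delta_\infty$ in finite time, so the corresponding geodesic is incomplete.

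The main obstacle I anticipate is bookkeeping rather than conceptual: because the normal form in case~3 is not diagonal, $X_\infty$ is no longer the tidy field of~(\ref{eq:X_infty}), and one must be careful to choose the right affine chart near $\Delta_\infty$ — the ``$z_1$-direction'' at infinity may behave differently from the $z_2,z_3$ directions, so I would likely need to check two of the three standard charts $\Psi$, $\psi\circ\Psi$, etc., to be sure the relevant idempotent is visible and that the vertical line over it really escapes the singular locus. A secondary point to verify is that $(\eta-\nu)\zeta>0$ forces $\eta\neq\nu$ (hence the case is genuinely non-trivial) and guarantees the sign condition needed for the singular point of $\fol_\infty$ to exist with real coordinates; once that is pinned down, the computation of $\kappa$ and the finite-time-crossing argument are routine and parallel to the diagonalizable case. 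If desired, one could alternatively phrase the argument algebraically by searching directly for a vector $z_0\in\slR\setminus\{0\}$ with $[z_0,\Phi^{-1}z_0]=\rho\,z_0$ for some $\rho\neq0$, i.e.\ an eigenvector of $\ad_{z_0}\Phi^{-1}$ with eigenvalue $\rho$ proportional to $z_0$; this is the coordinate-free description of an idempotent and may shorten the verification.
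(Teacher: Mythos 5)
Your proposal is correct and follows essentially the same route as the paper: writing $a=\eta-\nu$ and $b=\zeta\nu^2$ (so the hypothesis is exactly $ab>0$, which makes $\sqrt{b/a}$ real), one finds in the chart (\ref{eq:x-chart}) the two singular points $\left(\pm\sqrt{b/a},\,-b/(2a),\,0\right)$ of the foliation on $\Delta_{\infty}$, and the vertical line above either one is a regular leaf on which the field restricts to the nonzero constant $-a\sqrt{b/a}\;\partial/\partial x_3$ (equivalently, a quadratic field on the corresponding idempotent line through the origin in the $z$-coordinates), so it reaches $\Delta_{\infty}$ in finite time and the associated geodesic is incomplete. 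The only cosmetic discrepancy is that in this chart the restriction is constant rather than a multiple of $x_3^2\,\partial/\partial x_3$, which does not affect the finite-time blow-up conclusion.
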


\begin{proof}
Consider the rational extension of the Euler-Arnold vector field to $\RR\PP(3)$. The plane at infinity $\Delta_{\infty}$ is
invariant by the foliation $\fol$ induced by this extension since the Euler-Arnold vector field is a homogeneous polynomial
vector field distinct from a multiple of the radial vector field. Next, consider the affine coordinates $(x_1,x_2,x_3)$ nearby
$\Delta_{\infty}$ related to $(z_1,z_2,z_3)$ through the map $\Psi$ defined by~(\ref{eq:x-chart}). The Euler-Arnold vector field
is given, in the coordinates $(x_{1},x_{2},x_{3})$, by
\begin{equation}\label{eq_X_case3}
X = \frac{1}{x_3} \left[ (b - ax_1^2) \dd{x_{1}} - x_{1} (b + 2ax_2) \dd{x_{2}} - ax_1x_3 \dd{x_3} \right]  \, ,
\end{equation}
where $a = \eta - \nu$ and $b = \zeta \nu^2$. The assumption on the statement of the proposition translates, in these new parameters,
by $ab > 0$ and it can easily be checked that the intersection of the singular set of $\fol$ with $\Delta_{\infty}$, in the considered
affine chart, is constituted by $2$ points, namely
\[
p_1 = \left( \sqrt{\frac{b}{a}}, -\frac{b}{2a}, 0 \right) \qquad \text{and} \qquad p_2 = \left( -\sqrt{\frac{b}{a}},-\frac{b}{2a},0 \right) \, .
\]
The straight lines ``above'' each one of these singular points are regular leaves for $\fol$ which means that the restriction
of $X$ to them is a constant vector field. For example, the restriction of $X$ to the straight line or, more precisely, leaf
$L$ above $p_1$ is given by $X_{L} = -a \sqrt{b/a} \, \dd{x_3}$. In particular, $X_L$ is regular at the origin, which implies
that $L$ reaches the plane at infinity in finite time. The corresponding geodesic is therefore incomplete and the result follows.
\end{proof}

Let us now consider the case where $ab \leq 0$. In this case the following can be proved.

\begin{proposition}\label{prp:S3}
If $ab \leq 0$ then all geodesics are complete.
\end{proposition}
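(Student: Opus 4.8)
## Proof proposal for Proposition~\ref{prp:S3}

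The plan is to exhibit, for each metric with $ab\le 0$ in this Jordan block case, a positive definite first integral (or an appropriate substitute on the exceptional locus) that forces the leaves of $\fol$ to be complete, even though — in contrast with Case~1 — they need not be contained in a compact subset of $\RR^3$. Recall from Corollary~\ref{cor: lax-pair} that $I_1(z) = B(z,z)$ and $I_2(z) = B(z,\Phi^{-1}z)$ are first integrals of the Lax-pair system $\dot z = [z,\Phi^{-1}z]$. With $v=(v_k)$ the $B$-pseudo-orthonormal basis of Case~3 and $\Phi^{-1}$ as in~(\ref{eq:Phi-1-3}), a direct computation gives
\begin{align*}
I_1(z_1,z_2,z_3) &= z_1^2 + 2 z_2 z_3 \, , \\
I_2(z_1,z_2,z_3) &= \eta\, z_1^2 + 2\nu\, z_2 z_3 - \zeta\nu^2 z_3^2 \, .
\end{align*}
Hence $I_2 - \nu I_1 = (\eta-\nu) z_1^2 - \zeta\nu^2 z_3^2 = a\, z_1^2 - \nu^2 b\, z_3^2$, using the parameters $a=\eta-\nu$, $b=\zeta\nu^2$ of Proposition~\ref{prp:N3}. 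Since $ab\le 0$, the quadratic form $a\,z_1^2 - \nu^2 b\, z_3^2$ is (weakly) sign-definite: if $a>0\ge b$ both terms are $\ge 0$, if $a<0\le b$ both are $\le 0$, and in either case this first integral controls $z_1$ and $z_3$ along every leaf.

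First I would dispose of the degenerate subcases. If $a=0$, i.e. $\eta=\nu$, the vector field~(\ref{sist_EA3}) loses its $\dd{z_2}$- and $\dd{z_3}$-contributions coming from $(\nu-\eta)$, and one is left with $E = \zeta\nu^2 z_3^2\,\dd{z_1} - \zeta\nu^2 z_1 z_3\,\dd{z_2}$; then $z_3$ is constant, so $\dot z_1$ is a fixed multiple of a constant and $\dot z_2$ is linear in $z_1$, giving an affine flow which is manifestly complete. If $b=0$, i.e. $\zeta\nu^2=0$, then $\zeta\ne 0$ forces $\nu=0$, which is impossible since $\nu=1/\lambda$; so the only degenerate possibility is $a=0$. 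Thus in the remaining case we may assume $ab<0$, so that $a$ and $b$ have opposite (nonzero) signs, and the first integral $J_0 := \tfrac{1}{|a|}(I_2-\nu I_1) = \operatorname{sign}(a)\,z_1^2 - \operatorname{sign}(a)\,\tfrac{\nu^2 b}{|a|}\, z_3^2$ is a genuine positive definite quadratic form in $(z_1,z_3)$ — note $\operatorname{sign}(a) = -\operatorname{sign}(b)$ so the $z_3^2$ coefficient is positive. Along any leaf $L$, both $|z_1(t)|$ and $|z_3(t)|$ are therefore bounded by a constant depending only on $L$.

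It remains to bound $z_2$ — this is the step I expect to be the main obstacle, since $z_2$ is genuinely unbounded on some leaves and there is no positive definite first integral in all three variables. The idea is to use the boundedness of $z_1,z_3$ together with the differential equation directly: from~(\ref{sist_EA3}), $\dot z_2 = z_1\big((\nu-\eta)z_2 - \zeta\nu^2 z_3\big) = -a\, z_1\, z_2 - b\, z_1 z_3$. Fixing a leaf $L$ and writing $\phi(t) := z_1(t)$, which is a bounded known function of $t$ on the maximal interval of definition, this is a \emph{linear} first-order ODE in $z_2$:
\begin{equation*}
\dot z_2 + a\,\phi(t)\, z_2 = -b\,\phi(t)\, z_3(t) \, ,
\end{equation*}
whose right-hand side and coefficient are bounded on any finite interval. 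By the standard theory of linear ODEs (equivalently, by Gronwall's inequality applied to $|z_2|$), the solution $z_2(t)$ cannot blow up in finite time; hence the whole solution $(z_1,z_2,z_3)(t)$ stays in a compact set over any finite time interval and the maximal interval of definition is all of $\RR$. One should also remark that in the real setting this argument is clean; in the form stated the proposition concerns $\SLR$, so no analytic-continuation subtleties arise. Finally, translating back via Theorem~\ref{Thm: EA-equation} and Corollary~\ref{cor: lax-pair}, completeness of all integral curves of $E$ is equivalent to geodesic completeness of $q$, which proves the proposition and, combined with Proposition~\ref{prp:N3}, establishes Theorem~\ref{thm:slR(S3)}.
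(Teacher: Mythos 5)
Your argument is correct and follows essentially the same route as the paper: the degenerate case $a=0$ is settled by direct integration (polynomial solutions), and for $ab<0$ the combination $\nu I_1 - I_2$ is a definite quadratic form in $(z_1,z_3)$ that bounds those coordinates, after which the $z_2$-equation is a linear first-order ODE with globally defined coefficients — the paper writes out the integrating-factor solution where you invoke Gronwall, which is the same point. Two cosmetic slips only: since $b=\zeta\nu^2$, the first integral is $a z_1^2 - b z_3^2$ (no extra factor $\nu^2$), and when $a<0$ this form is negative rather than positive definite; either way it is definite, so the bound on $(z_1,z_3)$ and hence your conclusion stand.
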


\begin{proof}
The proposition can be easily proved in the case where $ab = 0$. In fact, assuming $ab = 0$ (or, equivalently, $a=0$ since $b \ne 0$
by assumption), $\Phi$ has a unique eigenvalue, whose difference between its algebraic and geometric multiplicity is equal to~$1$.
The third component of $X$ in~(\ref{sist_EA3}) vanishes identically and, hence, the Euler-Arnold vector field reduces to
$bk^2 \dd{z_{1}} - bk z_1 \dd{z_{2}}$, which is clearly complete since the solution of the associated differential system is
polynomial with respect to $t$.

Assume now that $ab < 0$. The first integrals $I_1(z) = B(z,z)$ and $I_2(z) = B(z, \Phi^{-1}z)$ for $\fol$ are given in coordinates
$(z_1,z_2,z_3)$ by $I_1(z_1,z_2,z_3) = z_1^2 + 2 z_2z_3$ and $I_2(z_1,z_2,z_3) = \eta z_1^2 - \zeta \nu^2 z_3^2 + 2\nu z_2 z_3$,
which are clearly linearly independent since $\zeta \ne 0$. Consider then the linear combination of the present first integrals given
by $I = \nu I_1 - I_2$
\[
I(z_1,z_2,z_3) = (\nu - \eta) z_1^2 + \zeta \nu^2 z_3^2 = -a z_1^2 + b z_3^2 \, ,
\]
and which is also a first integral for $\fol$. Note that $I$ is a quadratic positive definite first integral for the system
in the variables $z_1$ and $z_3$ formed by the first and third equations associate with the Euler-Arnold system. This means that
the projection of every leaf of $\fol$ through the map $\pi(z_1,z_2,z_3) = (z_1,z_3)$ is contained in a compact part of $\R^2$
(although the leaf itself may escape to infinity). This means that every solution of the Euler-Arnold system is such that the
expression of $(z_1,z_3) = (z_1(t), z_3(t))$ is defined for every $t$. The completeness of the solution $(z_1,z_2,z_3) = (z_1(t),
z_2(t),z_3(t))$ of the Euler-Arnold system thus depends on the second equation. The second equation, in turn, can be seen as a
first order non-homogeneous linear differential equation in the variable $z_2$. In fact, it corresponds to the differential equation
$\dot{z}_2 = f(t) z_2 + g(t)$
where $f$ and $g$ are the differential functions on $\R$ given, respectively, by $f(t) = (\nu - \eta) z_1(t)$ and $g(t) = - \zeta\nu^{2} z_1(t)
z_3(t)$. The general solution of such a differential equation takes on the form
\[
z_2(t) = \left( G(t) + C\right)e^{F(t)}
\]
where $F$ is a primitive function of $f$, $G$ is a primitive function of $g(t) e^{-F(t)}$ and $C$ a real constant. Since $f$ and
$g$ are differentiable functions defined on $\R$ so are their primitive functions and, consequently, so is $z_2 = z_2(t)$. We thus
conclude that $(z_1,z_2,z_3) = (z_1(t), z_2(t),z_3(t))$ is complete.
\end{proof}

In what follows, we will use the method above to
give an alternative proof for the characterization of the geodesically complete metrics. Furthermore, in the case of incomplete
metrics, we will present the maximal domain of definition of every single geodesic.


\subsection{Geodesics for $ab < 0$}\label{subsec:Case3abnegative}

Recall that $\fol$ is given in the affine coordinates $(x_1,x_2,x_3)$ by the vector field $X$ in Equation~(\ref{eq_X_case3}). The
foliation $\fol_{\infty}$ induced by $\fol$ in $\Delta_{\infty}$ is, in particular, represented by the vector field
\begin{equation}\label{eq:Xinf_case_3}
X_{\infty} = (b - ax_1^2) \dd{x_{1}} - x_{1} (b + 2ax_2) \dd{x_{2}}
\end{equation}
and it becomes clear that $\fol_{\infty}$ does not admit singular points in the present coordinates. However, $\fol_{\infty}$ has
two singular points namely, the origin of the affine coordinates $(y_1,y_2)$ and the origin of the affine coordinates $(u_1,u_2)$
(with $(y_1,y_2)$ and $(u_1,u_2)$ as in Section~\ref{Sec:Case1}). By calculating the vector fields $Y_{\infty}$ and $U_{\infty}$,
representatives of $\fol_{\infty}$ in the affine coordinates $(y_1,y_2)$ and $(u_1,u_2)$, respectively, it becomes clear that the
origin is a degenerate singular point for $Y_{\infty}$ while the origin is a saddle for $U_{\infty}$. Furthermore, by taking the
quotient of the two independent first integrals $I_1, \, I_2$ on the affine coordinates $(x_1,x_2,x_3)$, it is clear that the leaves
of $\fol_{\infty}$ are all contained in conics. The figures below represent $\fol_{\infty}$ in the different affine coordinates

$$
\begin{array}{ccccc}
\includegraphics[width=0.3\textwidth]{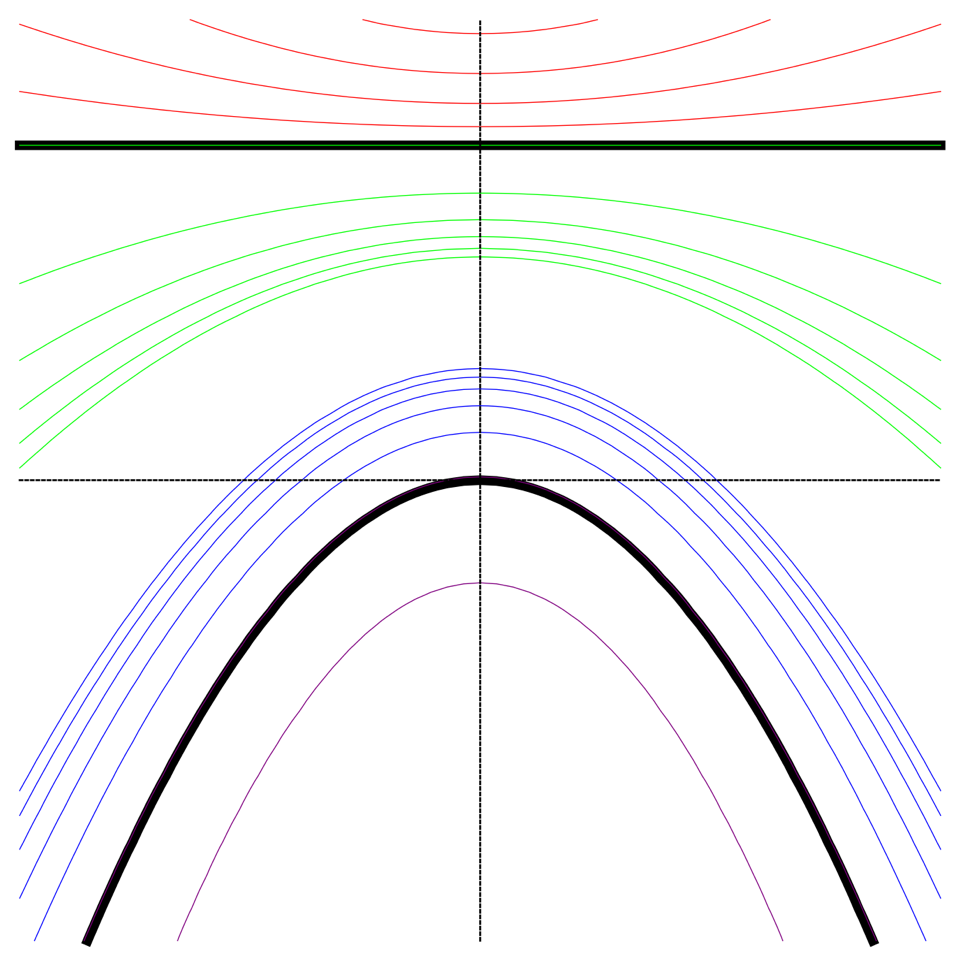} & \; \; &
\includegraphics[width=0.3\textwidth]{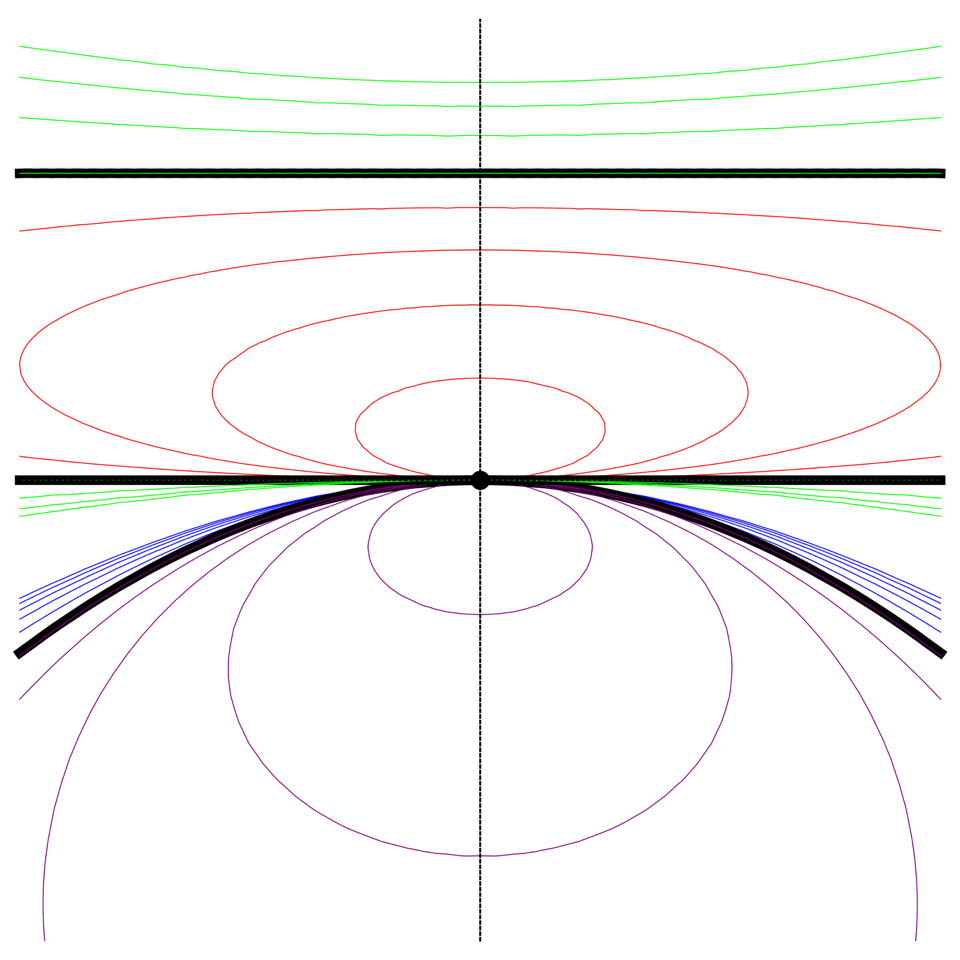} & \; \; &
\includegraphics[width=0.3\textwidth]{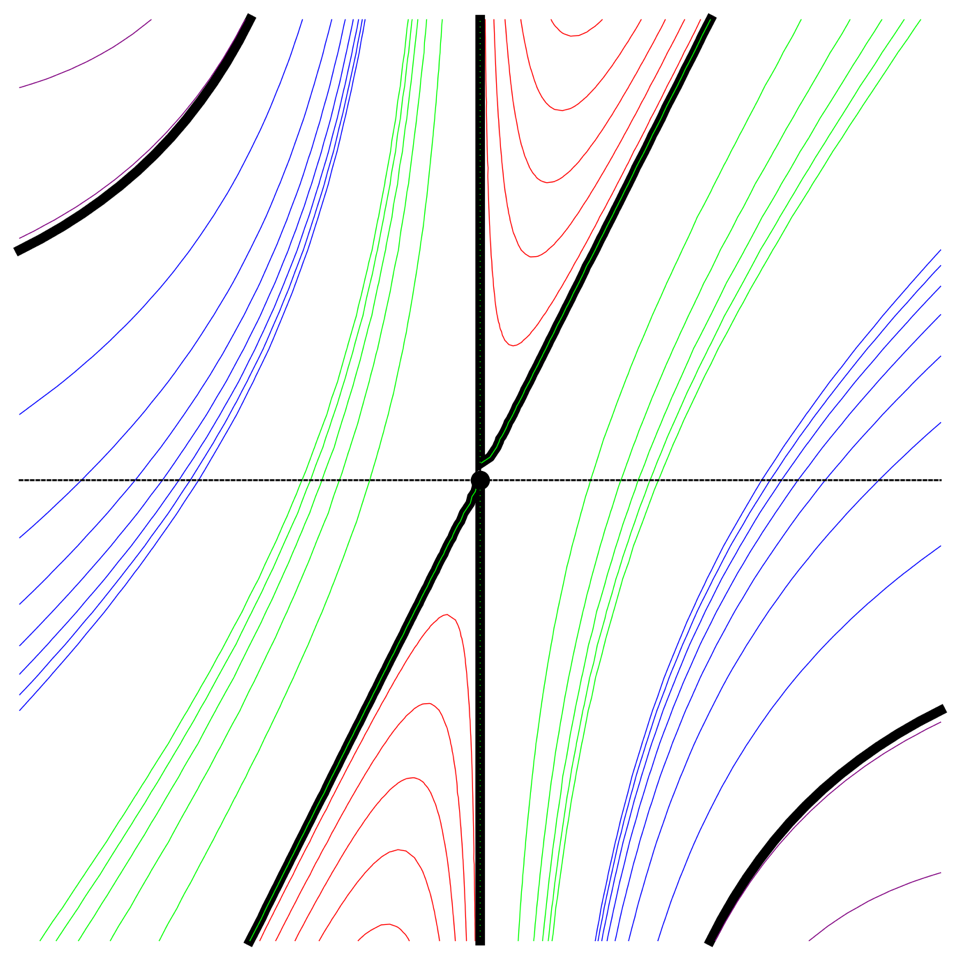}  \cr
 \text{Dynamics on $(x_{1},x_{2})$} & \; \; &
 \text{Dynamics on $(y_{1},y_{2})$} & \; \; &
 \text{Dynamics on $(u_{1},u_{2})$}
\end{array}
$$

By using the quotient of the two first integrals $I_1$ and $I_2$, it can easily be checked that the leaves of $\fol$ over $\Delta_{\infty}$
are given in coordinates $(x_1,x_2)$ by $x_2 = \alpha x_1^2 + \beta$, with $\alpha = \frac{K\eta - 1}{2(1-K\nu)}$ and $\beta = -
\frac{K\zeta \nu^2}{2(1-K\nu)}$,
which means that the leaves can be globally parameterized through the variable $x_1$. Fix then a leaf $L_{\infty}$ of $\fol$ over $\Delta_{\infty}$
and note that the cone above $L_{\infty}$ can naturally be parameterized through $(x_1,x_3)$. The pull-back of the Euler-Arnold vector field to the
mentioned cone if given by
\[
\frac{1}{x_1} \left[ (b-ax_a^2) \frac{\partial}{\partial x_1} - ax_1 x_3 \frac{\partial}{\partial x_3} \right] \, .
\]
Fixed a leaf $L$ in the mentioned cone (and naturally away from $\Delta_{\infty}$) we have that along $L$
\begin{equation}\label{eq:diff_x3_case3}
\frac{dx_3}{dx_1} = -\frac{x_1}{b-ax_1^2} \, x_3 \qquad \qquad \text{so that} \qquad \qquad
x_3 = x_3^0 \sqrt{\frac{b-ax_1^2}{b-a\left(x_1^0\right)^2}} \, .
\end{equation}
Now, since the time-form of the vector field along $L$ is given by
\[
dT = \frac{x_3(x_1)}{b-ax_1^2} \, dx_1 = \frac{C}{\sqrt{|b-ax_1^2|}} \, dx_1
\]
for some non-zero real constant $C$, we can easily conclude that the integral of the time-form along the entire leaf diverges. In fact,
since
\[
\int_1^{+\infty} \frac{C}{\sqrt{|b-ax_1^2|}} \, dx_1 = \int_1^{+\infty} \frac{C}{x_1 \,  \sqrt{\left| b/x_1^2 - a \right|}} \, dx_1
\]
and $\sqrt{\left| b/x_1^2 - a \right|}$ is bounded in the interval of integration, we get that the latter integral diverges.
Furthermore, since the integrand is an even function, the integral of the time-form along the entire leaf diverges
as well. We thus conclude that the geodesic associated with $L$ is complete.

Let us finally consider the leaves on the cone above the line at infinity of the affine coordinates $(x_1,x_2)$, which is invariant by
$\fol_{\infty}$. To study the mentioned leaves, we should then consider then the affine coordinates $(w_1, w_2, w_3)$ related with
$(z_1,z_2,z_3)$ through the map $\Lambda$ on $\RR^3 \setminus \{w_1 = 0\}$ defined by~(\ref{eqn:chart-w}), where divisor at infinity
is given by $\{w_1 = 0\}$ and the invariant plane by $\{w_3 = 0\}$. The Euler-Arnold vector field given in these coordinates by
\[
W = \frac{1}{w_{1}} \left[ -bw_{1}w_{3}^{2} \dd{w_{1}} - (aw_{2} + bw_{3} + bw_{2}w_{3}^{2}) \dd{w_{2}} + w_3(a - bw_{3}^2) \dd{w_{3}} \right]
\]
so that its restriction to the invariant plane $\{w_3 = 0\}$ is simply given by $-aw_2/w_1 \, \partial /\partial w_2$. The variable
$w_1$ along every single leaf $L$ in the mentioned plane is such that $\frac{dw_1}{dw_2} \equiv 0$ and, consequently, $w_1$ is
constant for all $t$. This implies that $L$ remains way from $\Delta_{\infty}$ nearby the origin of the present coordinates. The
leaf is, therefore, at least, $\RR^+$ or $\RR^-$ complete. To decide on the completeness of the leaf we should take the integral
of the associated time-form along the part of the leaf escaping from the domain of these charts and decide on its completeness.

Recall that $L$ is parameterized by $H(w_2) = (c,w_2,0)$ for some (non-zero) real constant $c$ and $w_2 > 0$ or $w_2 < 0$. Assume for
simplicity that $w_2 > 0$ along $L$. Since $dw_2/dT = -aw_2/w_1$, the time-form takes on the form
\[
dT = -\frac{c}{aw_2} \, dw_2
\]
It becomes clear that $\int_1^{+\infty} dT$ diverges and, consequently, the geodesic associated with $L$ is complete.


\subsection{Geodesics for $ab > 0$}

Proposition~\ref{prp:N3} ensures the existence of incomplete geodesics in the case where $ab > 0$. To be more precise, it was proved
that the straight line transverse to $\Delta_{\infty}$ and passing through $p_1$ is associated with incomplete geodesics. Let us now
investigate the domain of definition of the remaining geodesics.

Consider the restriction of $\fol$ to the divisor at infinity $\Delta_{\infty}$ along with the affine coordinates $(x_1,x_2)$,
$(y_1,y_2)$ and $(u_1,u_2)$ previously introduced. By considering the representative $X_{\infty}$ of the mentioned foliation (cf.
Equation~(\ref{eq:Xinf_case_3})) in the affine coordinates $(x_1, x_2)$, it can easily be checked that $\fol_{\infty}$ possesses
four singular points: two singular points in the affine coordinates $(x_1,x_2)$ (that, since distinct from the origin of these coordinates,
they are also presented in the other two affine coordinates) along with the origin of $(y_1,y_2)$ and of $(u_1,u_2)$. As previously mentioned,
the quotient of the first integrals $I_1$ and $I_2$ defines a first integral for $\fol_{\infty}$ and, since $I_1, \, I_2$ are homogeneous
polynomials of degree~$2$, the leaves of $\fol_{\infty}$ are conics on the different affine coordinates for $\Delta_{\infty}$. The figure
below represents $\fol_{\infty}$ in the different affine coordinates.
$$
\begin{array}{ccccc}
\includegraphics[width=0.3\textwidth]{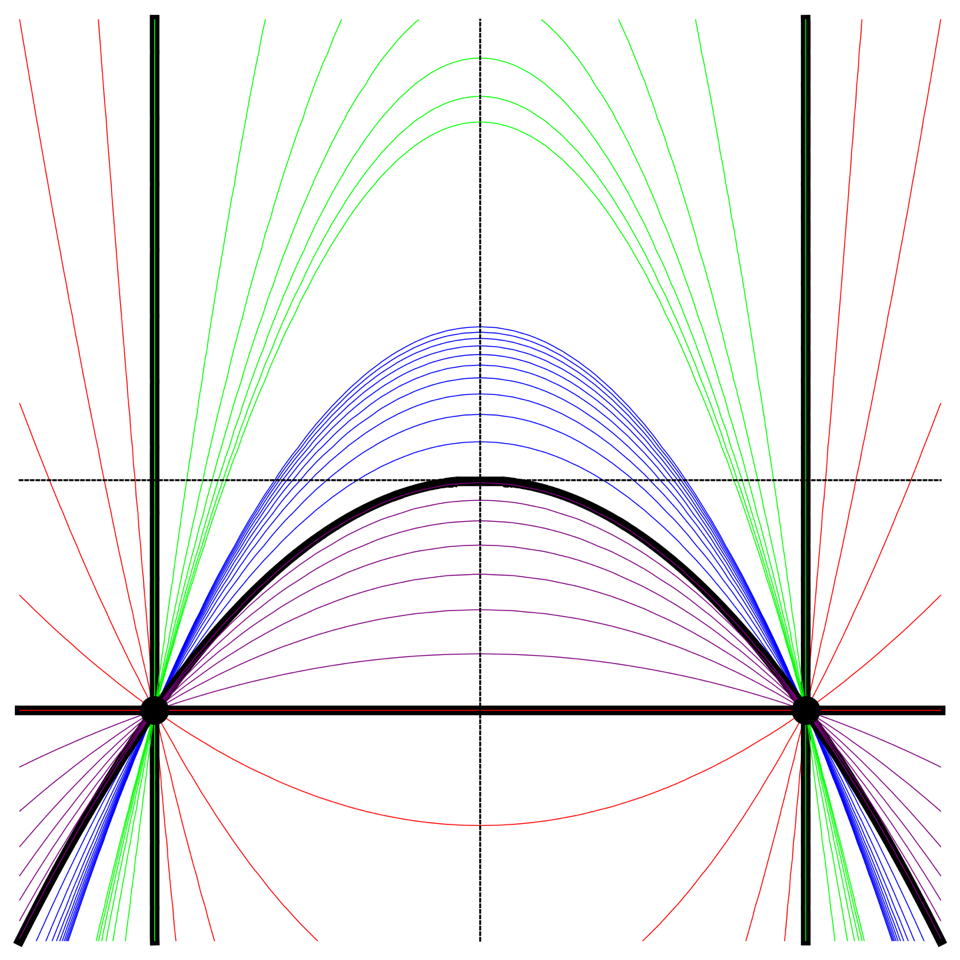} & \; \; &
\includegraphics[width=0.3\textwidth]{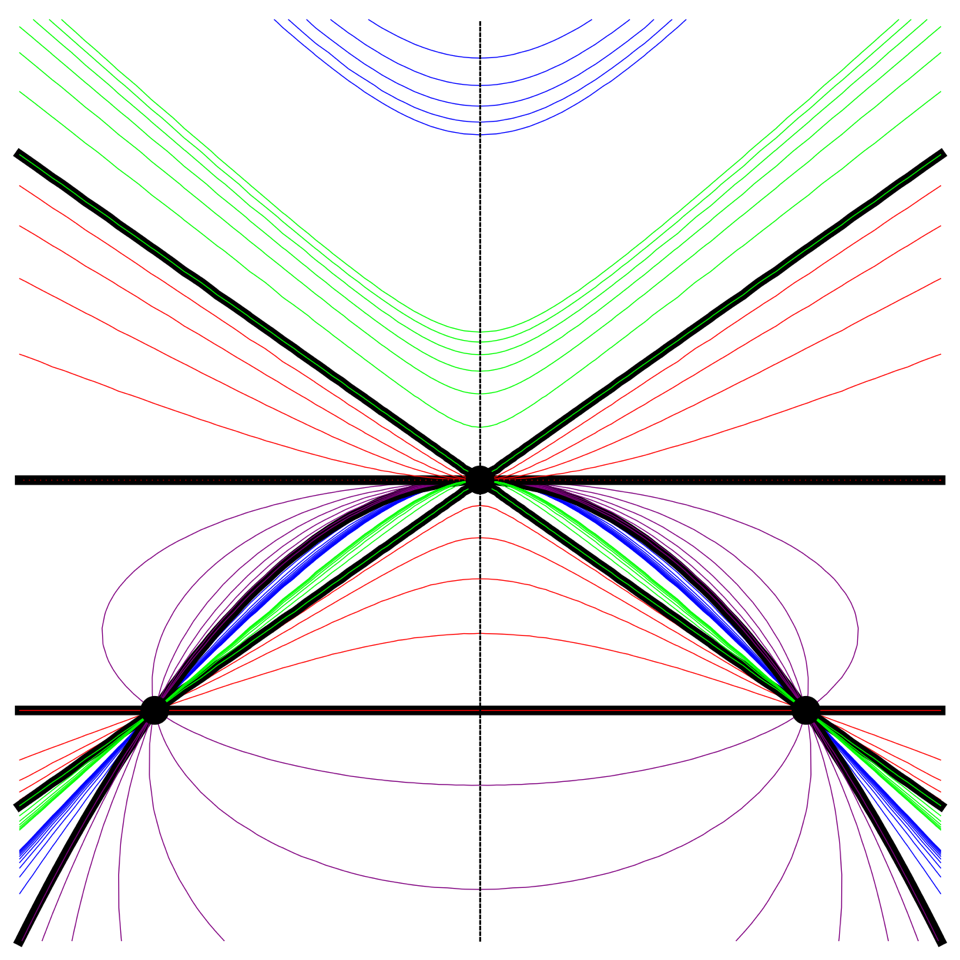} & \; \; &
\includegraphics[width=0.3\textwidth]{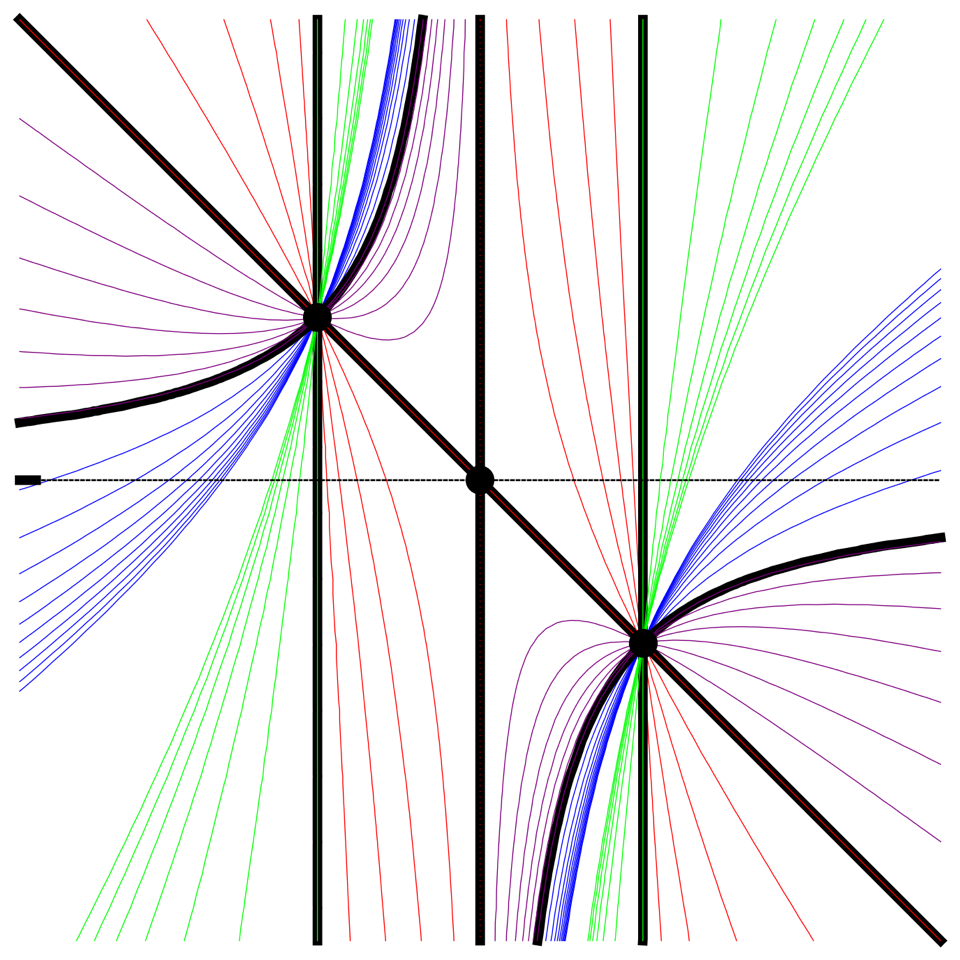}  \cr
 \text{Dynamics on $(x_{1},x_{2})$} & \; \; &
 \text{Dynamics on $(y_{1},y_{2})$} & \; \; &
 \text{Dynamics on $(u_{1},u_{2})$}
\end{array}
$$

The characterization of the domain of definition of the geodesics for $ab>0$ are described in the proposition below

\begin{proposition}\label{prop:incomplete_case_3}
Consider a leaf $L$ of $\fol$ and let $L_{\infty}$ stand for its projection on $\Delta_{\infty}$. The geodesic associated with
$L$ is complete if $L_{\infty}$ is contained in the line at infinity of the affine coordinates $(x_1,x_2)$ and incomplete in the
other cases. An incomplete geodesic is, in fact, $\R^+$ or $\R^-$ complete if $L_{\infty}$ is not contained in the bundle defined
by $-\sqrt{b/a} < x_1 < \sqrt{b/a}$. In other words, if $L_{\infty}$ is not a leaf joining the singular points $p_1$ and $p_2$.
\end{proposition}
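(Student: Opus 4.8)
The plan is to follow verbatim the strategy of Proposition~\ref{prop:description_complete_geodesics_case1}: first describe the leaves of $\fol_{\infty}$ on $\Delta_{\infty}$, and then, for each leaf $L_{\infty}$, analyse the leaves of $\fol$ on the invariant cone above $L_{\infty}$ through the ``height'' function relative to $\Delta_{\infty}$ and the integral of the time-form, exactly as in~\cite{RR_Applications}. I would begin by recording the available symmetry: the pull-back of the field $X$ of~(\ref{eq_X_case3}) by $(x_1,x_2,x_3)\mapsto(-x_1,x_2,x_3)$ is $-X$, so it suffices to describe the dynamics near $p_1$, near $q_y$ and near $q_u$. Next comes the structural observation particular to Case~3: the leaves of $\fol_{\infty}$ in the $(x_1,x_2)$-chart are arcs of the conics $x_2=\alpha x_1^2+\beta$ of Section~\ref{subsec:Case3abnegative} (plus the degenerate level sets, namely the pair of vertical lines $x_1=\pm\sqrt{b/a}$ and one horizontal line), and a direct computation with $\alpha,\beta$, $a=\eta-\nu$, $b=\zeta\nu^2$ shows that each of these level curves passes through both singular points $p_1=(\sqrt{b/a},-b/(2a),0)$ and $p_2=(-\sqrt{b/a},-b/(2a),0)$. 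Hence every leaf of $\fol_{\infty}$ in the $(x_1,x_2)$-chart is either the ``inner'' arc joining $p_1$ to $p_2$ inside the band $-\sqrt{b/a}<x_1<\sqrt{b/a}$, or an ``outer'' arc issuing from $p_1$ or $p_2$ and leaving the $(x_1,x_2)$-chart, hence accumulating at $q_y$ or $q_u$; the remaining leaves of $\fol_{\infty}$ lie on the line at infinity of the $(x_1,x_2)$-chart and join $q_y$ to $q_u$.

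For a geodesic whose projection $L_{\infty}$ lies on the line at infinity of the $(x_1,x_2)$-chart I would prove completeness directly: that projective line is the trace on $\Delta_{\infty}$ of the invariant plane $\{z_3=0\}$, and on $\{z_3=0\}$ the Euler-Arnold system~(\ref{sist_EA3}) reduces to $\dot z_1=0$, $\dot z_2=(\nu-\eta)z_1 z_2$, $\dot z_3=0$, whose solutions $z_2(t)=z_2^0\,e^{(\nu-\eta)z_1^0 t}$ are defined on all of $\R$; the straight lines above $q_y$ and $q_u$ lie in the singular set of $\fol$, so the associated geodesics are constant. This settles the ``complete'' alternative.

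For a leaf $L$ whose projection is a non-degenerate arc $L_{\infty}$ in the $(x_1,x_2)$-chart I would parametrize $L_{\infty}$ by $x_1$ and the cone above it by $(x_1,x_3)$. As in~(\ref{eq:diff_x3_case3}) the height along $L$ satisfies $x_3(x_1)=x_3^0\sqrt{(b-ax_1^2)/(b-a(x_1^0)^2)}$, so $|x_3|\to 0$ precisely as $x_1\to\pm\sqrt{b/a}$ (that is, as $L$ accumulates at $p_1$ or $p_2$), while near $x_1=\infty$ the leaf $L$ leaves the $(x_1,x_2,x_3)$-chart and accumulates at $q_y$ or $q_u$. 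The time-form along $L$ is $dT=C\,(b-ax_1^2)^{-1/2}\,dx_1$ for some $C\in\R^{\ast}$, so: near $x_1=\pm\sqrt{b/a}$ the integral $\int dT$ converges, since $b-ax_1^2$ has a simple zero there and $\int dx_1/\sqrt{|x_1\mp\sqrt{b/a}|}$ converges, whence $L$ reaches infinity in finite time as it accumulates at $p_1$ or $p_2$; whereas near $x_1=\infty$ the substitution $x_1=1/s$ turns $\int dT$ into $\int C\,ds/(s\sqrt{|bs^2-a|})$, which diverges at $s=0$ because $\sqrt{|bs^2-a|}\to\sqrt{|a|}\neq 0$, so it takes infinite time for $L$ to accumulate at $q_y$ or $q_u$. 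Combining the two: the inner arc $p_1\to p_2$ is finite-time at both ends, so its geodesic is incomplete with a bounded maximal interval; each outer arc $p_i\to q_j$ is finite-time at one end and infinite-time at the other, so its geodesic is $\R^+$- or $\R^-$-complete but not complete. Finally, when $L_{\infty}$ reduces to $p_1$ (the straight line transverse to $\Delta_{\infty}$ through $p_1$, and similarly $p_2$), Proposition~\ref{prp:N3} gives $X|_{L}=-a\sqrt{b/a}\,\dd{x_3}$, so $L$ meets $\Delta_{\infty}$ in finite time on one side, while on the other side $|x_3|\to\infty$ forces $L$ to converge to the origin of $\R^3$ --- a singular point of the homogeneous field --- in infinite time, so that geodesic is $\R^+$- or $\R^-$-complete as well; the degenerate leaves of $\fol_{\infty}$ carried by the vertical lines $x_1=\pm\sqrt{b/a}$ (which run from $p_i$ to $q_y$) are treated by the identical computation with the roles of $x_1$ and $x_2$ interchanged. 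Assembling all these cases yields exactly the dichotomy asserted.

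The step I expect to be the real obstacle is the bookkeeping rather than any single estimate: one must check that every level curve of the first integral of $\fol_{\infty}$ genuinely passes through both $p_1$ and $p_2$ --- so that the trichotomy ``inner arc / outer arc / line at infinity'' is exhaustive --- identify the few degenerate level sets (the vertical lines at $x_1=\pm\sqrt{b/a}$ and a horizontal line) and see that their cones are covered by the arguments above, and match up every leaf of $\fol$ with the branch of the argument that applies to it. The analytic input, namely the convergence versus divergence of $\int dT$, is routine and parallels the corresponding step of the proof of Proposition~\ref{prop:description_complete_geodesics_case1}.
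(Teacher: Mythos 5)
Your proposal is correct and follows essentially the same route as the paper: parametrize the cone over each conic leaf of $\fol_{\infty}$ by $x_1$, use the height function $x_3(x_1)=x_3^0\sqrt{(b-ax_1^2)/(b-a(x_1^0)^2)}$ and the time-form $dT=C\,|b-ax_1^2|^{-1/2}\,dx_1$ to get finite time at $x_1=\pm\sqrt{b/a}$ and infinite time as $x_1\to\infty$, and treat the vertical lines $\{x_1=\pm\sqrt{b/a}\}$ and the cone over the line at infinity separately. The only differences are minor: the paper handles the cone over the line at infinity via the time-form in the $(w_1,w_2,w_3)$-chart (quoting the $ab<0$ computation) rather than by solving the restricted system on $\{z_3=0\}$ explicitly as you do, and you additionally spell out the $\R^+$/$\R^-$-completeness of the lines above $p_1,p_2$ and the fact that every conic level curve passes through both $p_1$ and $p_2$, points the paper leaves implicit.
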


\begin{proof}
Recall that if $L_{\infty}$ is a leaf of $\fol_{\infty}$ contained in the chart associated with $(x_1,x_2)$, then its points
satisfy the equation $(1 - K\eta) x_1^2 + 2(1 - K\nu) x_2 = -K \zeta \nu^2$
for some $K \in \R$. In particular, the leaves can be globally parameterized by the variable $x_1$ with exception of two of
them that corresponds to the vertical straight lines given by $x_1 = \pm \sqrt{b/a}$. Assume first that $L_{\infty}$ is one
of the parabolas described above. We go as in the case $ab<0$. More precisely, we start by noticing that a leaf $L$ in the
cone above $L_{\infty}$ can be parameterized as $H(x_1) = (x_1,x_2(x_1),x_3(x_1))$, where
\[
x_2(x_1) = x_2 = \frac{K\eta - 1}{2(1-K\nu)} \, x_1^2 - \frac{K\zeta \nu^2}{2(1-K\nu)} \qquad \text{and} \qquad
x_3(x_1) = x_3 = x_3^0 \sqrt{\frac{b-ax_1^2}{b-a\left(x_1^0\right)^2}} \, ,
\]
since $x_3$ satisfies the differential equation~(\ref{eq:diff_x3_case3}). Suppose that $L$ is such that the domain of definition
above the parametrization is the bounded interval $]-\sqrt{b/a},\sqrt{b/a}[$. In this case we have that $L$ approaches the plane
at infinity either when $x_1$ approaches $-\sqrt{b/a}$ or $\sqrt{b/a}$. The time we need to go through $L$ is given by
\[
\int_{-\sqrt{b/a}}^{\sqrt{b/a}} \, dT = \int_{-\sqrt{b/a}}^{\sqrt{b/a}} \, \frac{C}{\sqrt{|b - ax_1^2|}} \, dx_1 \, .
\]
The denominator of the integrand is given by $\sqrt{\sqrt{\frac{b}{a}} - x_1} \, \, \sqrt{\sqrt{\frac{b}{a}} + x_1}$, up to a
multiplicative constant, so that we conclude that the integral in question converges. The associated geodesic is then incomplete,
being defined in a bounded interval.

Assume now that the domain of the parametrization for $L$ is $]\sqrt{b/a}, +\infty[$. It is easy to check that the geodesic
associated with $L$ is incomplete. In fact, from the expression of $x_3 = x_3(x_1)$ we know that $L$ approaches $\Delta_{\infty}$.
Indeed, as previously seen, $x_3$ goes to zero as $x_1$ goes to $\sqrt{b/a}$. Furthermore, the argument presented above allows
us to claim that $\int_{\sqrt{b/a}}^{\sqrt{b/a} + \varepsilon} \, dT$ converges for every $\varepsilon > 0$. We then reach
$\Delta_{\infty}$ in finite time. The geodesic is, however, $\R^+$ or $\R^-$ complete. In fact, it can easily be checked that
$\int_{\sqrt{b/a} + \varepsilon}^{+\infty} \, dT$ diverges. Note that the argument presented in the case $ab<0$ applies equally
well in this case and the other case can be treated similarly.

Consider now the restriction of $X$ to the invariant plane $\{x_1 = \sqrt{b/a}\}$ (the restriction to $\{x_1 = -\sqrt{b/a}\}$
is analogous), which is given in the present coordinates by
\[
\frac{1}{x_3} \left[ -\sqrt{\frac{b}{a}} (b+2ax_2) \frac{\partial}{\partial x_2} - a\sqrt{\frac{b}{a}} x_3 \right] \, .
\]
It can easily be checked that $x_3$ is given by
\[
x_3(x_2) = x_3^0 \, \sqrt{\frac{b+2ax_2}{b+2ax_2^0}} \, .
\]
The expression of $x_3$ allows us to say that every leaf $L$ on the mentioned plane approaches $\Delta_{\infty}$ when its
projection $L_{\infty}$ approaches the singular point $p_1$. The leaf $L_{\infty}$ leaves then the domain of the present
chart and accumulate at the origin of the affine coordinates $(y_1,y_2)$. Since the affine coordinates $(x_1,x_2,x_3)$
parameterize the entire leaf $L$, we will use them to estimate the integral if the time-form along the leaf. To begin with,
note that the time-form is given by
\[
dT = -\frac{x_3(x_2)}{\sqrt{\frac{b}{a}} \, (b + 2ax_2)} \, dx_2 = \frac{C}{\sqrt{|b + 2ax_2|}} \, dx_2
\]
for some $C \in \R^{\ast}$. It is clear that the integrals $\int_{-\frac{b}{2a}}^{-\frac{b}{2a} + \varepsilon} \, dT$ and
$\int_{-\frac{b}{2a} + \varepsilon}^{+\infty} \, dT$ have different nature, were $\varepsilon > 0$. In fact, the first one
converges while the second one diverges so that associated geodesics are $\R^+$ or $\R^-$ complete.

It remains to consider the leaves contained in the invariant cone above the line at infinity of the affine coordinates $(x_1,x_2)$.
The line in question is given in coordinates $(u_1,u_2)$ by $u_1 = 0$. Consider then the affine coordinates $(w_1,w_2,w_3)$ related
with $(z_1,z_2,z_3)$ through the map $\Lambda$ defined by (\ref{eqn:chart-w}). In the present coordinates the divisor at infinity is given by
$\{w_1 = 0\}$ and $(w_3,w_2)$ coincides with $(u_1,u_2)$. The proof that the leaves in the invariant plane $\{v_3 = 0\}$ are complete
goes word by word as in the case $ab < 0$ in Subsection~\ref{subsec:Case3abnegative}. In fact, the calculations involved do not
depend on the parameters $a$ and $b$.
\end{proof}


\section{Characterization of geodesics in cases 2 and 4}\label{sec:Case2and4}

We will prove that every left-invariant pseudo-Riemannian metric in cases 2 and 4 is geodesically incomplete by
exhibiting an incomplete geodesic for each one of these metrics. As in the previous cases, we will also characterize
the maximal domain of definition of every single geodesic.


\subsection{Case 2}

In the case where the isomorphism $\Phi$ has two non-real eigenvalues, there exists a $B$-orthonormal basis $v = (v_k)$ with respect to which
the isomorphism $\Phi$ takes on the form presented in Section~\ref{sec: LP_equations}, where $\beta \ne 0$. The isomorphism $\Phi^{-1}$ is
then given by
\[
\Phi^{-1} = \left(\begin{matrix} \eta & 0 & 0 \cr 0 & \gamma & \zeta \cr 0 & -\zeta & \gamma \end{matrix}\right)
\]
where $\eta=1/\mu$, $\gamma=\alpha/(\alpha^{2}+\beta^{2})$ and $\zeta= -\beta/(\alpha^{2}+\beta^{2})$. Since we are assuming $\beta\neq 0$,
we have $\zeta\neq0$ as well and, in this particular case, the following can be said.

\begin{teo}\label{thm:slR(S2)}
There exists at least one geodesic that is incomplete.
\end{teo}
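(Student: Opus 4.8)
The plan is to exhibit an explicit incomplete geodesic by following the same strategy as in Cases 1 and 3: pass to the rational extension of the Euler-Arnold vector field on $\RR\PP(3)$, locate a singular point of the induced foliation $\fol_\infty$ on the plane at infinity $\Delta_\infty$, and check that the straight line ``above'' that singular point is a regular leaf on which the vector field is (up to a nonzero constant) $x_3^2\,\partial/\partial x_3$, hence crosses $\Delta_\infty$ in finite time.

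First I would write the Euler-Arnold vector field $E$ on $\slR$ in the $B$-orthonormal basis $v=(v_k)$ of Lemma~\ref{lm:B}(a). Using Lemma~\ref{lm:A} (the bracket relations $[v_1,v_2]=\delta v_3$, $[v_1,v_3]=\delta v_2$, $[v_2,v_3]=-\delta v_1$) together with $\Phi^{-1}=\mathrm{diag\text{-}block}(\eta,\gamma,\zeta;-\zeta,\gamma)$ as given, the Lax-pair system $\dot z=[z,\Phi^{-1}z]$ becomes an explicit homogeneous quadratic vector field $E$ on $\RR^3$; it suffices to take $\delta=1$. Then I would introduce the affine chart $(x_1,x_2,x_3)$ near $\Delta_\infty$ via the map $\Psi$ of~(\ref{eq:x-chart}) and compute the expression $X=\frac1{x_3}[\dots]$ of $E$ in these coordinates, exactly as in~(\ref{eq:X_VF}) and~(\ref{eq_X_case3}). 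The foliation $\fol$ extends analytically to $\RR\PP(3)$ and $\Delta_\infty$ is invariant since $E$ is homogeneous and not a multiple of the radial field.

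Next I would locate the singular points of $\fol$ on $\Delta_\infty\cap\{x_3=0\}$ in this chart. The key point, which distinguishes Case~2 from Case~1, is that because two eigenvalues are genuinely complex the relevant quadratic factor coming from the rotation block does not vanish for real $x_i$; I expect the singular set on $\Delta_\infty$ in the $(x_1,x_2)$-chart to consist of (essentially) one real point $p_1$ arising from the line spanned by the real eigenvector $v_1$, together with possibly points visible only in the other two affine charts. One then checks that the straight line $L$ above such a $p_1$ is not contained in the singular set of $X$, and that the restriction $X|_L$ is a nonzero constant multiple of $\partial/\partial x_3$ — equivalently, in the original coordinates, that the restriction of $E$ to the corresponding invariant line through the origin is a nonzero quadratic vector field $c\,x_3^2\,\partial/\partial x_3$. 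Since such a one-dimensional equation $\dot u=c u^2$ has solutions with finite-time blow-up, the corresponding geodesic reaches $\Delta_\infty$ in finite time and is therefore incomplete.

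The main obstacle I anticipate is purely computational bookkeeping: correctly identifying which eigen-line of $\Phi^{-1}$ produces a genuinely regular (non-singular) leaf above infinity on which $E$ does not vanish, and verifying that the $\beta\neq 0$ (equivalently $\zeta\neq 0$) hypothesis is exactly what guarantees this leaf exists and that $X|_L$ has nonzero constant term. A convenient shortcut is the idempotent viewpoint used in~\cite{B-M}: an incomplete geodesic corresponds to an idempotent of the quadratic field, i.e. a vector $v_0\neq 0$ with $[v_0,\Phi^{-1}v_0]=\kappa v_0$ for some $\kappa\neq 0$; I would look for $v_0$ proportional to $v_1$ (the real eigendirection), compute $[v_1,\Phi^{-1}v_1]$ using the bracket relations, and observe that $\beta\neq 0$ forces the result to be a nonzero multiple of $v_1$, producing the desired idempotent and hence the incomplete geodesic. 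Once an idempotent is found, incompleteness of its associated geodesic is immediate since $E$ restricted to $\mathrm{Span}(v_0)$ is $\kappa\langle\text{coordinate}\rangle^2\,\partial/\partial(\text{coordinate})$, whose solutions have a finite maximal interval of definition.
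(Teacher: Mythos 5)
Your overall strategy (pass to the rational extension on $\RR\PP(3)$, locate singular points of $\fol_\infty$ on $\Delta_\infty$, and show the straight line above such a point is a regular leaf on which the vector field restricts to a nonzero constant, i.e.\ an idempotent/quadratic one-dimensional equation with finite-time blow-up) is exactly the paper's approach. However, there is a genuine gap in where you locate the incomplete geodesic. Your ``shortcut'' proposes the idempotent $v_0$ proportional to the real eigenvector $v_1$; but $\Phi^{-1}v_1=\eta v_1$, so $[v_1,\Phi^{-1}v_1]=\eta[v_1,v_1]=0$, and more generally \emph{no} eigenvector of $\Phi^{-1}$ can ever be an idempotent, since $[v_0,\Phi^{-1}v_0]=\lambda[v_0,v_0]=0$. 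The hypothesis $\beta\neq0$ cannot rescue this computation: following your plan literally, you would find $[v_1,\Phi^{-1}v_1]=0$ and no incomplete geodesic at the place you claim. Indeed, in the coordinates of the paper the Euler--Arnold field is $E = b(z_2^2+z_3^2)\,\partial/\partial z_1 + z_1(az_3-bz_2)\,\partial/\partial z_2 + z_1(az_2+bz_3)\,\partial/\partial z_3$, which vanishes identically on the $z_1$-axis; the line ``above'' the point at infinity determined by $v_1$ (the origin of the $(u_1,u_2)$-chart) is therefore contained in the singular set of $\fol$, and the corresponding geodesics are constant, hence complete — the opposite of what you need.

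The nontrivial content of the proof in Case~2 is precisely to solve the singular-point equations of $X$ on $\Delta_\infty$ in the $(x_1,x_2)$-chart, namely $b(x_2^2-x_1^2+1)-ax_1^2x_2=0$ together with $a(1-x_2^2)-2bx_2=0$ (for $x_1\neq0$). This yields two points $p_{1,2}=(\pm\rho_1,\rho_2,0)$, with $\rho_2=(-b\pm\sqrt{a^2+b^2})/a$ when $a\neq0$ (and $(\pm1,0,0)$ when $a=0$), whose coordinates genuinely mix the rotation-block directions $v_2,v_3$ with $v_1$ — they do not arise from the real eigendirection. One then checks, as in Cases~1 and~3, that the restriction of $X$ to the vertical lines above $p_{1,2}$ is a nonzero constant multiple of $\partial/\partial x_3$, so these leaves reach $\Delta_\infty$ in finite time and the associated geodesics are incomplete. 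So your framework is right, but the existence and location of the idempotent still has to be established by this computation (with the case distinctions $a=0$, $a\neq0$, and the sign of $b$), and the specific candidate you propose provably fails.
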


\begin{proof}
Let $(z_1,z_2,z_3) \in \RR^3$ stand for the coordinates of $z \in \slR$ in the above mentioned $B$-orthonormal basis $v$. In the
present coordinates, the Euler-Arnold vector field is given by
\begin{equation}\label{eq:EAVF2}
E = b(z_{2}^{2}+z_{3}^{2}) \dd{z_{1}} + z_1 (az_{3} - bz_{2}) \dd{z_{2}} + z_1 (az_{2} + bz_{3}) \dd{z_{3}} \, ,
\end{equation}
where $a = \gamma - \eta$ and $b = \zeta$ (and, in particular, $b \ne 0$). Consider now the usual affine coordinates $(x_1,x_2,x_3)$
where $\Delta_{\infty} \simeq \{x_3 = 0\}$. The vector field $E$ is written in these coordinates as
\[
X = \frac{1}{x_{3}} \left[ \left( b(x_2^2 - x_{1}^{2} + 1) - ax_{1}^{2}x_{2} \right) \dd{x_{1}} + x_1 \left( a(1 - x_{2}^{2}) - 2bx_{2} \right)
\dd{x_{2}} - x_{1} x_{3} (ax_{2} + b) \dd{x_{3}} \right]
\]
and the induced foliation on this chart has two singular points $p_1, \, p_2$ over $\Delta_{\infty}$. The position of the singularities
depend, however, on the parameters $a$ and $b$. More precisely, in the case where $a=0$, the two singular points on $\Delta_{\infty}$ are
$p_1 = (1,0,0)$ and $p_2 = (-1,0,0)$. In turn, in the case where $a \ne 0$, we have that $p_1 = (\rho_1,\rho_2,0)$ and $p_2 = (-\rho_1,\rho_2,0)$,
where
\[
\rho_1 = \sqrt{2b(-b+\sqrt{b^2+a^2})}/|a| \qquad \text{and} \qquad \rho_2 = (-b+\sqrt{b^2+a^2})/a \, ,
\]
if $b > 0$, and
\[
\rho_1 = \sqrt{2b(-b-\sqrt{b^2+a^2})}/|a| \qquad \text{and} \qquad \rho_2 = (-b-\sqrt{b^2+a^2})/a
\]
if $b < 0$. It can easily be checked that the restriction of $X$ to the line above each one of the above mentioned singular points
is a (non-zero) constant vector field meaning that the associated geodesic reaches $\Delta_{\infty}$ in finite time. These geodesics
are then incomplete.
\end{proof}

The study of this case is arduous since, as it becomes clear from the proof of Theorem~\ref{thm:slR(S2)}, not only the cases
$a=0$ and $a \ne 0$ should be considered separately, as in the case where $a \ne 0$, we should also look separately at the cases
$b > 0$ and $b < 0$. We will describe the foliation for the generic case where $a \ne 0$ and $b>0$ and the figure below exhibits
the leaves of $\fol_{\infty}$ in the case where $0 < \eta < \gamma < \zeta$.
$$
\begin{array}{ccccc}
\includegraphics[width=0.3\textwidth]{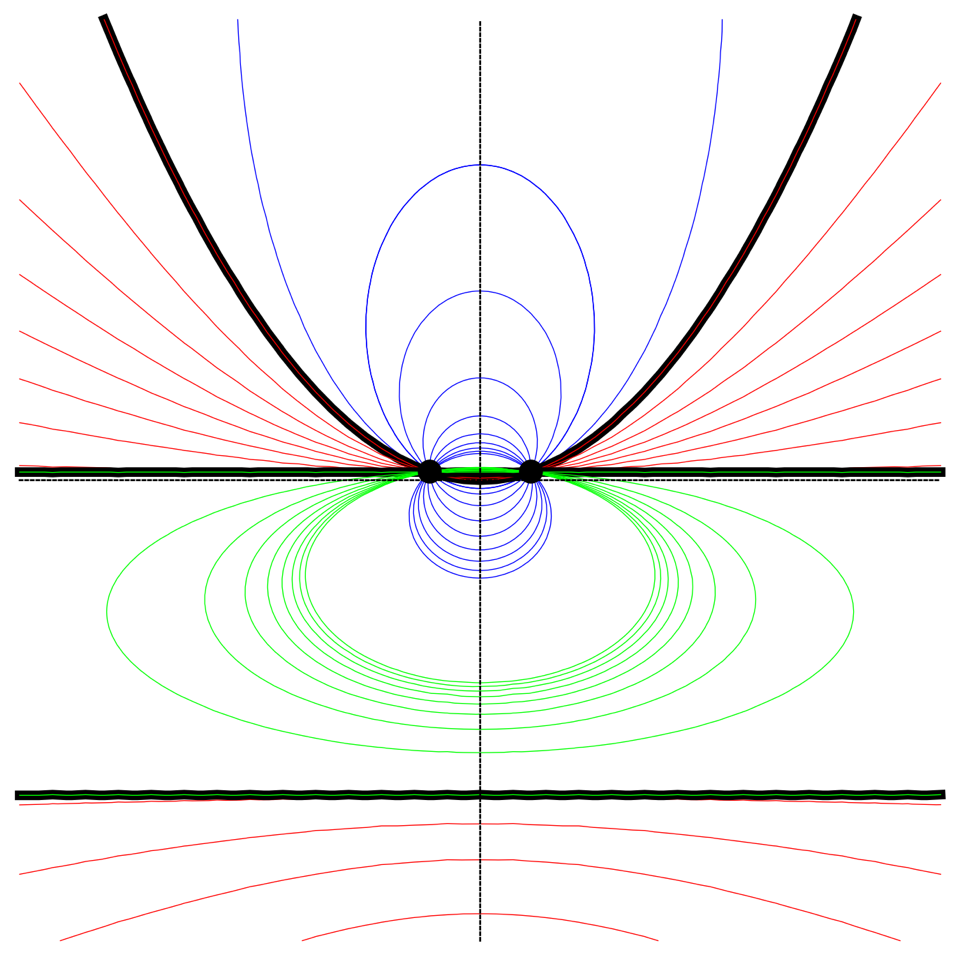} & \; \; &
\includegraphics[width=0.3\textwidth]{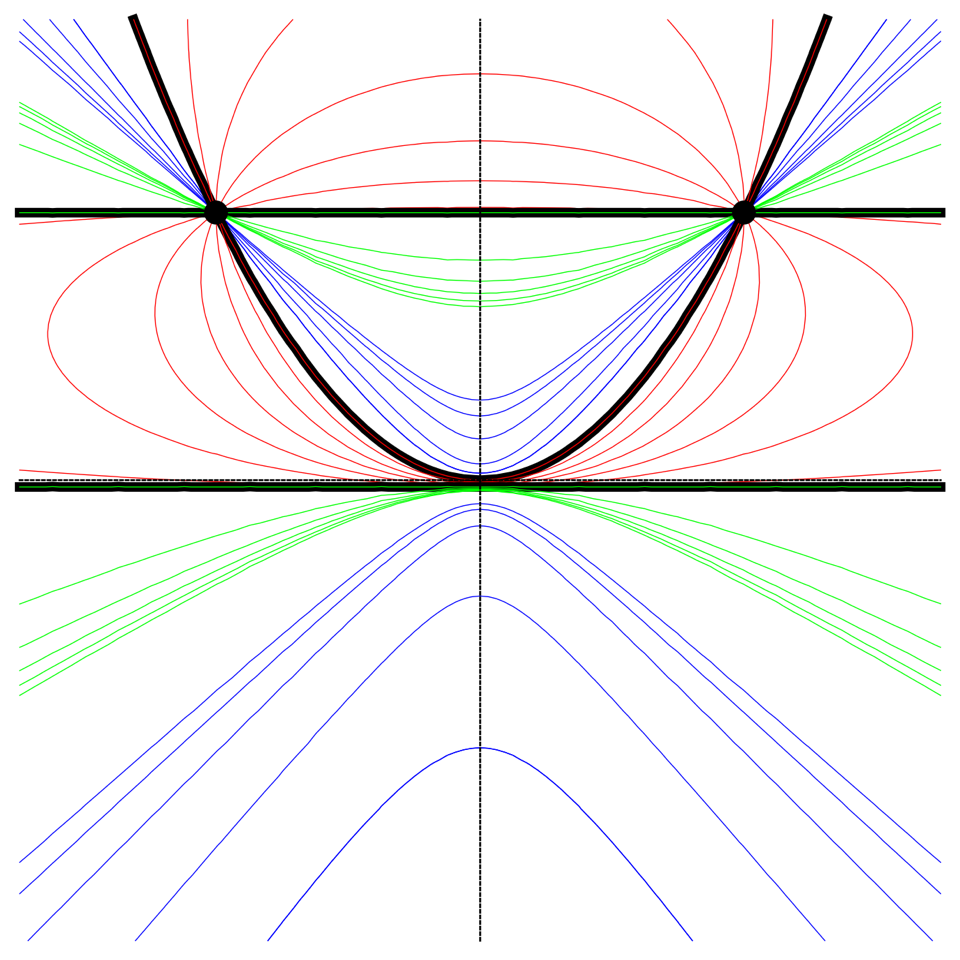} & \; \; &
\includegraphics[width=0.3\textwidth]{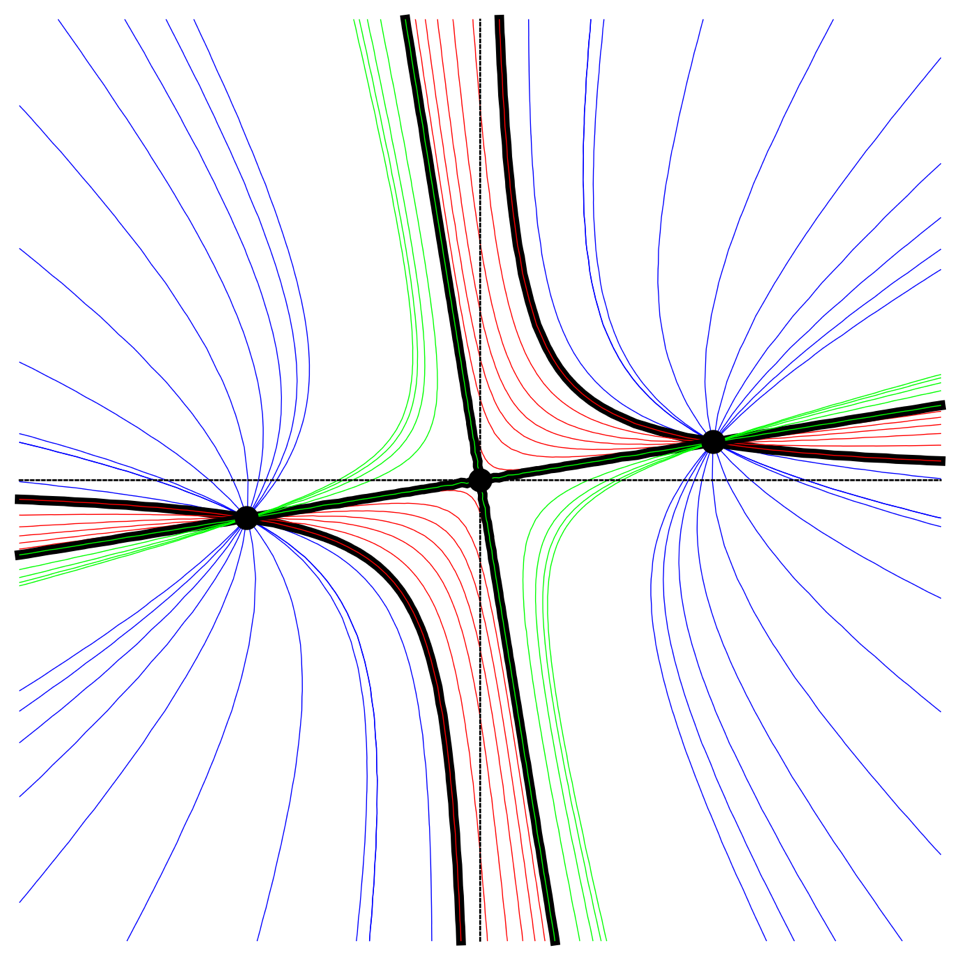}  \cr
 \text{Dynamics on $(x_{1},x_{2})$} & \; \; &
 \text{Dynamics on $(y_{1},y_{2})$} & \; \; &
 \text{Dynamics on $(u_{1},u_{2})$}
\end{array}
$$

The foliation $\fol_{\infty}$ has a total of three singular points, namely the two singular points $p_1, \, p_2$ mentioned above along with
the origin of the affine coordinates $(u_1,u_2)$ being, the later, a saddle for the mentioned foliation. It is clear from the picture that
every leaf of $\fol_{\infty}$ accumulates at both singular points $q_1$ and $q_2$, with exception of three of them, namely the straight
line given, in the affine coordinates $(x_1,x_2)$, by $x_2 = (-b-\sqrt{a^2+b^2})/a$ and the two unbounded leaves contained in the straight
line $x_2 = (-b+\sqrt{a^2+b^2})/a$. The three leaves in question are the unique leaves of $\fol_{\infty}$ accumulating at the origin of the
coordinates $(u_1,u_2)$. An analogous study to the one presented in the previous sections allows us to state the following.

\begin{proposition}
The geodesic associated with a leaf $L$ is complete if $L$ is contained in the invariant plane $\{x_2 = (-b-\sqrt{a^2+b^2})/a\}$ and incomplete
in all the other cases. An incomplete geodesic is, in fact, $\R^+$ or $\R^-$ complete if and only if $L$ is contained in the invariant plane
$\{x_2 = (-b+\sqrt{a^2+b^2})/a\}$ and its projection does not coincide with the line segment joining $p_1$ and $p_2$.
\end{proposition}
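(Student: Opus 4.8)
The plan is to follow the same pattern as in Cases 1 and 3: extend the Euler--Arnold vector field $E$ of~(\ref{eq:EAVF2}) to a singular foliation $\fol$ on $\RR\PP(3)$ with $\Delta_\infty$ invariant, describe the induced foliation $\fol_\infty$ on $\Delta_\infty$, and then analyze the invariant cones over its leaves. Since $b\ne 0$, reading off~(\ref{eq:EAVF2}) shows that the singular set of $E$ inside $\RR^3$ is the single straight line $\{z_2=z_3=0\}$, whose only point at infinity is the origin $q_u$ of the chart $(u_1,u_2)$; hence $\fol$ has no singular point off the three lines ``above'' $p_1$, $p_2$ and $q_u$, and a geodesic leaves every compact subset of $\RR^3$ only when its projection $L_\infty$ accumulates at one of $p_1,p_2,q_u$. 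The quotient $I_1/I_2$ descends to a first integral of $\fol_\infty$ whose level sets are conics; two of them are the invariant lines $\{x_2=\rho_2^\pm\}$ with $\rho_2^\pm=(-b\pm\sqrt{a^2+b^2})/a$ (so $\rho_2^+$ is the quantity called $\rho_2$ in the proof of Theorem~\ref{thm:slR(S2)} for $b>0$), the line $\{x_2=\rho_2^+\}$ passing through $p_1,p_2$ while $\{x_2=\rho_2^-\}$ (together with the two unbounded arcs of $\{x_2=\rho_2^+\}$) accumulates at $q_u$.

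Next I parametrize, for each leaf $L_\infty$ of $\fol_\infty$, the invariant cone above it by $(x_1,x_3)$; a leaf $L$ of $\fol$ in that cone is a graph $x_3=x_3(x_1)$ solving $\dfrac{dx_3}{dx_1}=-\dfrac{x_1(ax_2+b)}{b(x_2^2-x_1^2+1)-ax_1^2x_2}\,x_3$ with $x_2=x_2(x_1)$ the conic, so that $|x_3|$ is a power of $\bigl|b(x_2^2-x_1^2+1)-ax_1^2x_2\bigr|$, vanishing exactly over the singularities of $X_\infty$, and the time-form along $L$ becomes $dT=\dfrac{C\,dx_1}{\sqrt{\bigl|b(x_2^2-x_1^2+1)-ax_1^2x_2\bigr|}}$ for some $C\in\RR^\ast$ (taking $L\ne L_\infty$). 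On $\{x_2\equiv\rho_2^\pm\}$ the radicand is the quadratic $\kappa-\mu_0 x_1^2$ with $\kappa=b(\rho_2^2+1)>0$ and $\mu_0=a\rho_2+b$, and this is where the dichotomy appears: for $\rho_2^-$ one has $\mu_0=-\sqrt{a^2+b^2}<0$, the radicand $\kappa+|\mu_0|x_1^2$ never vanishes, $L_\infty$ is globally parametrized by $x_1\in\RR$, $|x_3(x_1)|\to\infty$ as $x_1\to\pm\infty$ (the leaf converging to a singular point of $E$ on the $z_1$-axis), and $\int^{\pm\infty}dT\sim\int^{\pm\infty}\dfrac{C\,dx_1}{\sqrt{|\mu_0|}\,|x_1|}$ diverges, so the geodesic is complete; for $\rho_2^+$ one has $\mu_0=\sqrt{a^2+b^2}>0$, the radicand vanishes at $x_1=\pm\rho_1=\pm\sqrt{\kappa/\mu_0}$ (the first coordinates of $p_1,p_2$), the integrable square-root singularity there forces $L$ to reach $\Delta_\infty$ in finite time, while on the unbounded arcs $|x_1|>\rho_1$ the integral $\int^{\pm\infty}dT$ diverges, giving geodesics that are $\R^+$- or $\R^-$-complete but not complete, and on the segment $|x_1|<\rho_1$ both ends are reached in finite time, giving a bounded interval of definition.

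For a generic conic $L_\infty$ one checks from the qualitative picture of $\fol_\infty$ that $L_\infty$ accumulates at both $p_1$ and $p_2$; near each of them the radicand of $dT$ has a simple zero in the parameter, the resulting singularity of $dT$ is integrable and $x_3\to 0$, so $L$ reaches $\Delta_\infty$ in finite time at both ends and the geodesic is incomplete with bounded interval of definition. It remains to record the three distinguished lines: over $q_u$ the line is the $z_1$-axis, which lies in the singular set of $\fol$ and inside the plane $\{z_2=\rho_2^- z_3\}$ (i.e. $\{x_2=\rho_2^-\}$), so those geodesics are constant, hence complete; over $p_1$ (and symmetrically $p_2$) the restriction of $X$ is a nonzero constant multiple of $\dd{x_3}$, so the geodesic crosses $\Delta_\infty$ in finite time in one direction while converging, in infinite time, to the origin of $\RR^3$ in the other, and is thus $\R^+$- or $\R^-$-complete; these lines lie in $\{x_2=\rho_2^+\}$ and project to a single point, as the statement requires. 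Collecting all cases gives exactly the asserted dichotomy. I expect the main obstacle to be the bookkeeping across the three affine charts $(x_1,x_2)$, $(y_1,y_2)$, $(u_1,u_2)$ of $\Delta_\infty$ and a uniform treatment of ellipses, parabolas and hyperbolas (the latter two leave the $(x_1,x_2)$-chart and must be followed in $(y_1,y_2)$ or $(u_1,u_2)$), together with keeping the sign conventions straight so that $\mu_0<0$ is matched to completeness and $\mu_0>0$ to one-sided completeness; conceptually nothing new arises beyond Cases 1 and 3, but this is where the effort lies.
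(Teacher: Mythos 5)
Your overall route is the same as the paper's: extend $E$ to $\RR\PP(3)$, describe $\fol_{\infty}$, and decide completeness leaf by leaf via the height function and the time-form on the invariant cones. Your treatment of the two invariant planes --- the exact law $x_3\propto\sqrt{|\kappa-\mu_0x_1^2|}$ with $\kappa=b(\rho^2+1)$, $\mu_0=a\rho+b=\pm\sqrt{a^2+b^2}$, the convergence/divergence of $\int dT$ at the two ends, and the explicit discussion of the vertical lines over $p_1$, $p_2$ and $q_u$ --- is correct and yields exactly the dichotomy of the statement (the paper proves completeness on $\{x_2=(-b-\sqrt{a^2+b^2})/a\}$ by showing the leaf stays in a compact subset of $\R^3$ rather than by divergence of the time integral, but the two arguments are interchangeable here, and you in fact cover the lines over $p_1,p_2$ more explicitly than the paper does).

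The one step that is wrong as written is the generic-conic case. From $\frac{dx_3}{dx_1}=-\frac{R}{A_1}\,x_3$, with $A_1=b(x_2^2-x_1^2+1)-ax_1^2x_2$ and $R=x_1(ax_2+b)$, you conclude that $|x_3|$ is a power of $|A_1|$ and that $dT=C\,dx_1/\sqrt{|A_1|}$. Along a leaf of $\fol_{\infty}$ one has $\frac{dA_1}{dx_1}=\partial_{x_1}A_1+\partial_{x_2}A_1\cdot\frac{A_2}{A_1}=-2R+(2bx_2-ax_1^2)\frac{A_2}{A_1}$, so the square-root (or any power) law holds exactly only where $A_2\equiv 0$, i.e.\ on the two horizontal invariant lines; on a genuine conic leaf neither $x_3\propto\sqrt{|A_1|}$ nor your expression for $dT$ is valid, and this is precisely the step that is supposed to give ``incomplete, on a bounded interval, in all the other cases''. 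The gap is repairable along the lines the paper sketches: since $E$ is homogeneous with quadratic first integrals, on a leaf $L$ with $I_1(L)=c_1\neq 0$ one has exactly $x_3^2=(x_1^2+x_2^2-1)/c_1$, and a direct check gives $\rho_1^2+\rho_2^2=1$, so $p_1,p_2$ lie on the conic $\{x_1^2+x_2^2=1\}$ (they are base points of the pencil of conics cut out by $I_1/I_2$, which is why all conic leaves reach them); hence the height vanishes like the square root of a simple zero at $p_1$ and $p_2$ and the time-form has an integrable singularity at both ends, giving the bounded interval (leaves with $c_1=0$ are handled with $I_2$ or a linear combination). This also makes precise the normal form $\frac{u(\theta)}{x_3}\dd{\theta}+\frac{u'(\theta)}{2}\dd{x_3}$ invoked in the paper, which holds for a suitable parametrization $\theta$ of $L_{\infty}$ but not for the parametrization by $x_1$ that you use.
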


\begin{proof}[Idea of the proof]
Fix a leaf $L_{\infty}$ of $\fol_{\infty}$ accumulating at both $p_1$ and $p_2$ and let $L$ be a leaf of $\fol$ projecting on $L_{\infty}$
and distinct from $L_{\infty}$. It can be checked that if $H(\theta,x_3) = (h_1(\theta), h_2(\theta), x_3)$ stands for a parametrization of
the the cone above $L_{\infty}$, the pull-back of the Euler-Arnold vector field always takes on the form
\[
\frac{u(\theta)}{x_3} \dd{\theta} + \frac{u'(\theta)}{2} \dd{x_3}
\]
for some function $u = u(\theta)$ vanishing at $\theta_1$ and $\theta_2$, where $\theta_1$ and $\theta_2$ are such that $H(\theta_1,0) = p_1$
and $H(\theta_2,0) = p_2$. In particular, the function $x_3 = x_3(\theta)$ is such that
\[
\frac{dx_3}{d\theta} = \frac{u'(\theta)}{2u(\theta)} \, x_3
\]
so that $x_3(\theta) = x_3^0 \sqrt{u(\theta)/u(\theta_0)}$. Since $u = u(\theta)$ vanishes at $p_1$ and $p_2$, so does $x_3 = x_3(\theta)$.
In particular, the leaves $L$ accumulate at both $p_1$ and $p_2$. In order to discuss the completeness of the leaves, the time-form has to be
considered. In fact, recall that the geodesic associated to the leaf $L$ is complete if and only if both integrals $\int_{\theta_1}^{\theta}
\, dT$ and $\int_{\theta_2}^{\theta}$ diverge for $\theta$ arbitrarily close to $\theta_1$ and $\theta_2$, respectively.
Up to a multiplicative constant, the time-form of $X$ along $L$ takes on the form $dT = 1/\sqrt{|u(\theta)|} \, d\theta$. The expression of
$u = u(\theta)$ depends on the type of the leaf $L_{\infty}$ but they are similar to the expressions obtained in the previous two sections.
In particular, we are able to prove that both of the integrals above converge. The geodesic associated with $L$ is then incomplete (being
neither $\R^+$ nor $\R^-$ complete) and the result follows in this generic case.

Assume now that $L$ is contained in the invariant plane $\{x_2 = (-b-\sqrt{a^2+b^2})/a\}$ and distinct from $L_{\infty}$. We claim that
$L$ is contained in a compact subset of $\R^3$. In fact, since $L$ contains no singular points in the chart $(x_1,x_2,x_3)$, the ``height''
function is bounded from below by a strictly positive constant on any compact subset of it. The projection of the complement of a compact
subset of $L$ is necessarily contained in a compact subset of the domain of definition of the affine coordinates $(u_1,u_2)$ for $\Delta_{\infty}$.
With respect to the coordinates $(w_1,w_2,w_3)$ (where $(u_1,u_2)$ is identified with $(w_3,w_2)$ and $\Delta_{\infty} \simeq \{w_1 = 0\}$),
the invariant plane becomes $\{w_2 = w_3(-b-\sqrt{a^2+b^2})/a\}$ and it can easily be checked that the ``height'' function over this complement
of $L$ is also bounded from below by a strictly positive constant. Indeed, the leaf converges to a point on the line above the origin, distinct
from the origin itself. Since $L$ is contained in a compact subset of $\R^3$, its associated geodesic is complete.

Finally, it remains to consider the leaves whose projection $L_{\infty}$ on $\Delta_{\infty}$ belong to an unbounded part of the straight line
through $p_1$ and $p_2$. Note that $L_{\infty}$ becomes a line segment joining $p_1$ or $p_2$ to the origin in the affine coordinates $(w_2,w_3)$.
By re-writing the Euler-Arnold vector field in the coordinates $(w_1,w_2,w_3)$ and restricting it to the invariant plane above $L_{\infty}$, we
can easily check that the ``height'' function $|w_1|$ is bounded from below by a positive constant. To be more precise, $L$ converges to a point
in the singular set of $\fol$ given by $\{w_2 = w_3 = 0\}$, with $w_1 \ne 0$. Furthermore, the geodesics associated with each point in the singular
set in question are then naturally complete.
\end{proof}


\subsection{Case 4}

Let us finally consider case 4, i.e. the case where the isomorphism $\Phi$ possesses a (real) eigenvalue $\lambda$ such that $m_a(\lambda)
-m_g(\lambda)=2$. In this case there exists a $B$-pseudo-orthonormal basis $v = (v_k)$ with respect to which the isomorphism $\Phi$ takes
on the form presented in Section~\ref{sec: LP_equations}, where $\zeta$ can be assumed strictly positive. The isomorphism $\Phi^{-1}$ takes
then on the form
\[
\Phi^{-1} = \left(\begin{matrix} \nu & 0 & -\zeta\nu^{2} \cr -\zeta\nu^{2} & \nu & \zeta^{2}\nu^{3}\cr 0 & 0 & \nu \end{matrix}\right)
\]
where $\nu=1/\lambda$.

\begin{teo}\label{thm:slR(S4)}
There exists at least one geodesic that is incomplete.
\end{teo}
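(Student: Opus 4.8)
The plan is to follow the same template as Propositions~\ref{prp:N1} and~\ref{prp:N3}: write the Lax/Euler--Arnold vector field explicitly, extend it rationally to $\RR\PP(3)$, locate a singular point of the induced foliation on the plane at infinity, and show that the straight line ``above'' it is a regular leaf on which the restricted vector field is a nonzero constant multiple of $\dd{x_3}$, so that this leaf crosses $\Delta_\infty$ in finite time and the corresponding geodesic is incomplete.

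First I would compute the vector field. By Corollary~\ref{cor: lax-pair} the geodesics correspond to the integral curves of $\dot z = [z,\Phi^{-1}z]$. Using the $B$-pseudo-orthonormal basis $v=(v_k)$ of Lemma~\ref{lm:B}(c), the bracket relations of Lemma~\ref{lm:A} (with parameter $\delta\in\{-1,1\}$), and the matrix of $\Phi^{-1}$ displayed just above the statement, a direct computation gives, after dividing out the nonzero constant $\delta\zeta\nu^2$ (which only reparametrizes time linearly and hence does not affect (in)completeness),
\[
E = (z_1 z_3 - \zeta\nu z_3^2)\dd{z_1} + (-z_1^2 + \zeta\nu z_1 z_3 + z_2 z_3)\dd{z_2} - z_3^2\dd{z_3}\, .
\]
As a sanity check one verifies that $I_1(z) = z_1^2 + 2z_2 z_3$ is indeed a first integral.

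Next I would extend $E$ to $\RR\PP(3)$; since $E$ is homogeneous of degree $2$ and not a multiple of the radial vector field, the plane at infinity $\Delta_\infty$ is invariant for the induced foliation $\fol$. In the affine chart $(x_1,x_2,x_3)$ related to $(z_1,z_2,z_3)$ by the map $\Psi$ of~(\ref{eq:x-chart}), the usual computation yields
\[
X = \frac{1}{x_3}\left[ (2x_1 - \zeta\nu)\dd{x_1} + (-x_1^2 + \zeta\nu x_1 + 2x_2)\dd{x_2} + x_3\dd{x_3}\right]\, ,
\]
so that the singular set of $\fol$ over $\Delta_\infty$ in this chart is the single point $p_1 = \bigl(\zeta\nu/2,\, -\zeta^2\nu^2/8,\, 0\bigr)$ (recall $\zeta>0$ and $\nu = 1/\lambda\neq 0$). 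Restricting $X$ to the straight line $L = \{x_1 = \zeta\nu/2,\ x_2 = -\zeta^2\nu^2/8\}$ ``above'' $p_1$, the first two coefficients vanish identically and one is left with $X|_L = \dd{x_3}$, a nonzero constant vector field, regular at $p_1$. Hence $L$ reaches $\Delta_\infty$ in finite time and the corresponding geodesic of $\SLR$ is incomplete, which proves the theorem.

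There is no conceptual obstacle here; the argument is entirely computational and parallel to Propositions~\ref{prp:N1} and~\ref{prp:N3}. The only points requiring care are (i) extracting the coefficients of $E$ correctly from the rather cumbersome $\Phi^{-1}$ together with the pseudo-orthonormal bracket relations, and (ii) transforming to the chart at infinity without errors, in particular keeping the $1/x_3$ factor and checking that this chart actually contains $p_1$ (it does, since $p_1$ has finite $(x_1,x_2)$-coordinates); alternatively one may simply note that the conclusion is chart-independent.
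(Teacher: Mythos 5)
Your proposal is correct and follows essentially the same route as the paper: the paper computes the same Euler--Arnold vector field (your $E$ is the paper's expression divided by the harmless constant $\zeta\nu^2$), passes to the same chart at infinity, finds the unique singular point $p=\bigl(\zeta\nu/2,-\zeta^2\nu^2/8,0\bigr)$ on $\Delta_\infty$, and concludes incompleteness from the nonzero constant restriction of $X$ to the line above $p$. No gaps; your computations (including the first integral check and the location of $p$) agree with the paper's.
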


\begin{proof}
If $(z_1,z_2,z_3) \in \RR^3$ stands for the coordinates of $z \in \slR$ in the above mentioned $B$-pseudo-orthonormal basis,
the Euler-Arnold vector field is given in these coordinates by
\begin{equation}\label{eq:EAVF4}
E = \zeta \nu^2 \left[ z_3(-\zeta \nu z_3 + z_1) \dd{z_{1}} + (z_{2}z_{3} - z_{1}^{2} + \zeta \nu z_{1}z_{3}) \dd{z_{2}}
- z_{3}^{2} \dd{z_{3}} \right] \, .
\end{equation}
In turn, on the affine coordinates $(x_1,x_2,x_3)$, the Euler-Arnold vector field becomes
\[
X = \frac{\zeta \nu^{2}}{x_3} \left[ (2x_1 - \zeta \nu) \dd{x_1} + \left( \zeta \nu x_1 - x_1^2 + 2x_2 \right) \dd{x_2}
+ x_3 \dd{x_3} \right] \ .
\]
and the induced foliation on this chart has a unique singular points over $\Delta_{\infty}$, namely $p = \left( \frac{\zeta \nu}{2},
-\frac{\zeta^2 \nu^2}{8} ,0 \right)$. The restriction of $X$ to the invariant straight line ``above'' $p$ is the constant vector
field $\zeta \nu^{2} \dd{x_{3}}$ and, since it is non-zero, the result follows.
\end{proof}

The foliation induced by $\fol$ on $\Delta_{\infty}$ possesses two singular points: the point $p$ in the previous lemma and $q$, the
origin of the affine coordinates $(y_1,y_2)$. Furthermore, it can be checked that every single leaf of $\Delta_{\infty}$ accumulates
at both $p$ and $q$, with exception of the leaf that in coordinates $(y_1,y_2)$ is given by $y_2 = 0$. The latter accumulates uniquely
at $q$. The figure below exhibits the leaves of $\fol_{\infty}$ in the case where $0 < \nu < \zeta$.
$$
\begin{array}{ccccc}
\includegraphics[width=0.3\textwidth]{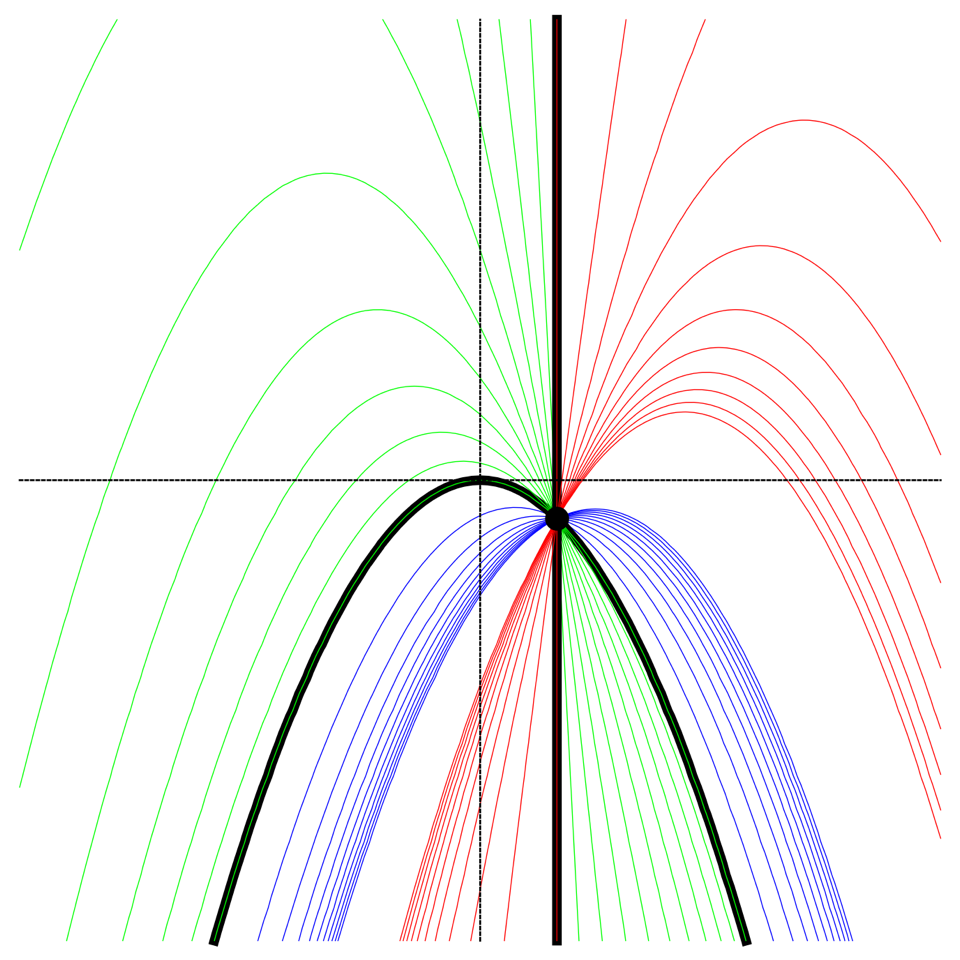} & \; \; &
\includegraphics[width=0.3\textwidth]{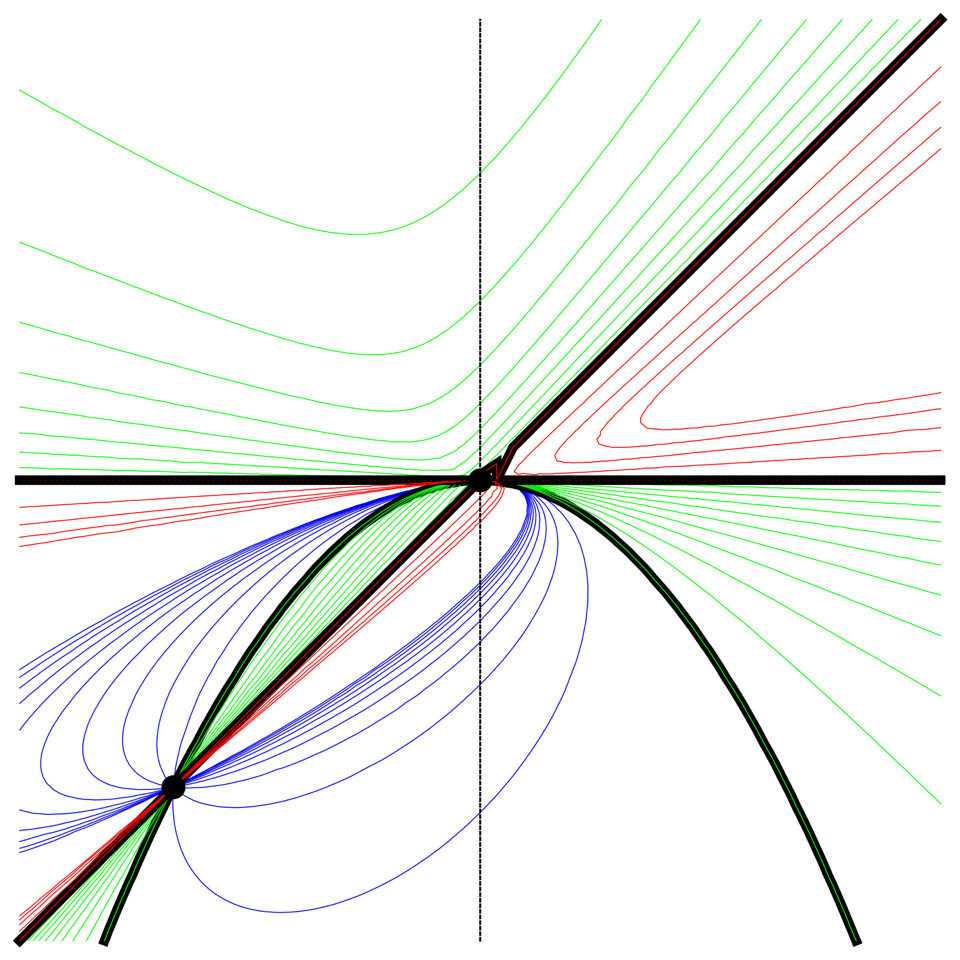} & \; \; &
\includegraphics[width=0.3\textwidth]{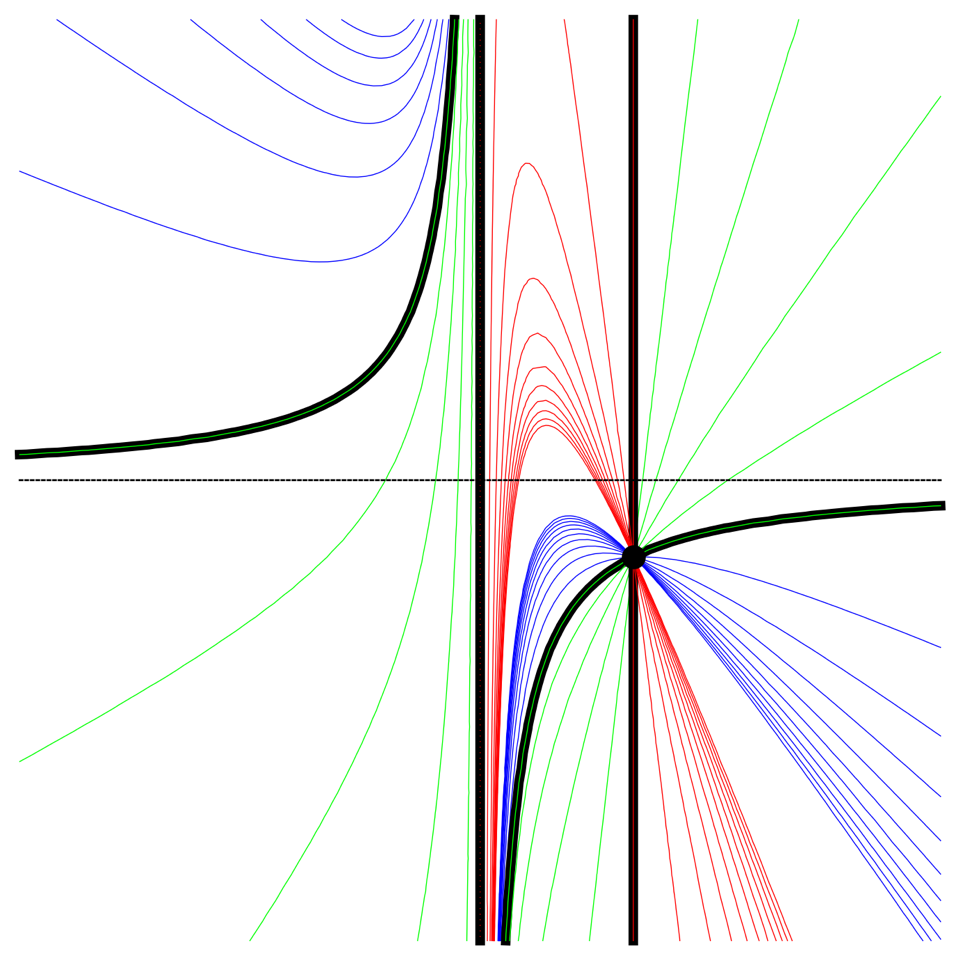}  \cr
 \text{Dynamics on $(x_{1},x_{2})$} & \; \; &
 \text{Dynamics on $(y_{1},y_{2})$} & \; \; &
 \text{Dynamics on $(u_{1},u_{2})$}
\end{array}
$$

\begin{proposition}
The geodesics associated with leaves in the invariant plane $\{y_2 = 0\}$ are all complete. The remaining ones are incomplete.
Moreover, they are $\R^+$ or $\R^-$ complete since, although all of them approach $\Delta_{\infty}$ as their projections goes
to $p$ and to $q$, they reach $\Delta_{\infty}$ in finite time as their projections approach $p$ but not when they approach $q$.
\end{proposition}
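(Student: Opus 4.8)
The plan is to argue directly with the Euler--Arnold vector field $E$ of~(\ref{eq:EAVF4}) in the affine coordinates $(z_1,z_2,z_3)$ of $\slR$, exploiting the fact that its third component decouples. First note that the cone over the leaf $\{y_2=0\}\subseteq\Delta_{\infty}$ is, in these coordinates, the invariant plane $\{z_3=0\}$ (this is immediate from the coordinate changes fixed in Section~\ref{Sec:Case1}). Restricting~(\ref{eq:EAVF4}) to $\{z_3=0\}$ gives $\dot z_1=\dot z_3=0$ and $\dot z_2=-\zeta\nu^2 z_1^2$, so each solution is the affine curve $t\mapsto(z_1^0,\,z_2^0-\zeta\nu^2(z_1^0)^2 t,\,0)$, defined on all of $\R$; hence all geodesics in this plane are complete. (In particular the ``line above'' $q$, namely $\{z_1=z_3=0\}$, lies in the singular set of $E$, so its geodesics are constant.)

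For the remaining leaves I would proceed as follows. Suppose $z_3^0\neq0$. From~(\ref{eq:EAVF4}) the third equation is $\dot z_3=-\zeta\nu^2 z_3^2$, whose solution $z_3(t)=z_3^0/(1+\zeta\nu^2 z_3^0 t)$ has maximal domain a half-line with one finite endpoint (since $\zeta>0$), and $z_3(t)\neq0$ on it. I would then recover the other two coordinates along a leaf from the two Lax-pair first integrals of Corollary~\ref{cor: lax-pair}, $I_1(z)=B(z,z)=z_1^2+2z_2z_3$ and $I_2(z)=B(z,\Phi^{-1}z)$: writing $J:=\nu I_1-I_2=\zeta\nu^2 z_3(2z_1-\zeta\nu z_3)$, a leaf satisfies $z_1=\tfrac{\zeta\nu}{2}z_3+\tfrac{c_3}{2\zeta\nu^2 z_3}$ and $z_2=(c_1-z_1^2)/(2z_3)$ for the constants $c_1=I_1$, $c_3=J$. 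These are rational functions of $z_3$ with poles only at $z_3=0$, so they stay finite on the whole half-line above (alternatively: once $z_3(t)$ is known the equation for $z_1$ is linear, and then so is the one for $z_2$). Consequently the maximal domain of the geodesic $(z_1(t),z_2(t),z_3(t))$ coincides with that of $z_3$, a half-line with a finite endpoint: the geodesic is incomplete, but $\R^+$-complete if $z_3^0>0$ and $\R^-$-complete if $z_3^0<0$.

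It then remains to match the two ends of this half-line with $p$ and $q$. At the finite endpoint $z_3\to\infty$, and the formulas give $z_1\sim\tfrac{\zeta\nu}{2}z_3$, $z_2\sim-\tfrac{\zeta^2\nu^2}{8}z_3$, so $(x_1,x_2,x_3)=(z_1/z_3,z_2/z_3,1/z_3)\to(\tfrac{\zeta\nu}{2},-\tfrac{\zeta^2\nu^2}{8},0)=p$; that is, the geodesic reaches $\Delta_{\infty}$ at $p$ in finite time. At the infinite endpoint $z_3\to0$: if $c_3\neq0$ then $z_1$ blows up like $1/z_3$ while $z_2\sim-z_1^2/(2z_3)$ blows up like $1/z_3^3$, so $|z_2|$ dominates and the direction tends to the $z_2$-axis, i.e. to $q$; if $c_3=0$ but $c_1\neq0$, then $z_1=\tfrac{\zeta\nu}{2}z_3\to0$ while $z_2\sim c_1/(2z_3)$, again giving $q$; and if $c_1=c_3=0$ one is exactly on one of the two rays ``above'' $p$ (the incomplete geodesic exhibited in Theorem~\ref{thm:slR(S4)}), along which the geodesic instead tends to the singular point $0\in\slR$. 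In every case the infinite endpoint is not attained in finite time, which yields the last assertion.

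The step I expect to demand the most care is the asymptotic analysis at the $q$-end: one must check that when $c_3\neq0$ the coordinate $z_2$ grows strictly faster than $z_1$ (so that the limiting direction is genuinely the singular point $q$ and not some other point of the line $\{y_2=0\}$), and that in all cases the approach to $\Delta_{\infty}$ there takes infinite time. If one prefers to stay within the house style of the previous sections, this last point can be recast by computing the time-form $dT$ on the cone over a leaf of $\fol_{\infty}$ and showing that its integral diverges as one tends to the degenerate singularity $q$, in contrast with the convergent integral at the non-degenerate singularity $p$.
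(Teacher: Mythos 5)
Your proof is correct, but it takes a genuinely different route from the paper. The paper works in the compactification $\R\PP(3)$: it parameterizes the invariant cones above the leaves of $\fol_{\infty}$ (parabolas, the line $\{x_1=\zeta\nu/2\}$, and the line at infinity of the $(x_1,x_2)$-chart), computes the height function $x_3$ along each cone and then the time-form $dT$, and decides completeness by convergence of $\int dT$ near $p$ (convergent) versus divergence near the degenerate singularity $q$; for the cone over $\{y_2=0\}$ it passes to a chart at infinity, finds a radial restricted vector field, and completes the argument by letting the leaf ``go through the infinity of the chart and return''. You instead stay in the affine coordinates $(z_1,z_2,z_3)$ and exploit two structural facts: the third Euler--Arnold equation decouples as the Riccati equation $\dot z_3=-\zeta\nu^2 z_3^2$ (solved explicitly, with maximal domain a half-line when $z_3^0\neq0$), and the first integrals $I_1$ and $\nu I_1-I_2=\zeta\nu^2 z_3(2z_1-\zeta\nu z_3)$ slave $z_1$ and $z_2$ to $z_3$ by rational formulas with poles only at $z_3=0$; the maximal domain of the geodesic is then exactly that of $z_3$, and the endpoint asymptotics identify the finite-time end with $p$ and the infinite-time end with $q$. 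Your computations check out (including the identification of the cone over $\{y_2=0\}$ with $\{z_3=0\}$, where solutions are affine in $t$, a cleaner completeness argument than the paper's). What each approach buys: yours yields explicit maximal intervals and avoids the compactification and time-form machinery altogether; the paper's is uniform with the treatment of the other cases and its description of the foliation at infinity is reused later for the semicompleteness analysis over $\C$. One small point in your favor rather than against you: on the exceptional leaves with $I_1=\nu I_1-I_2=0$ (the rays above $p$), the infinite-time end tends to the origin of $\slR$ rather than having projection approaching $q$, so the blanket phrase in the statement admits this minor exception, which your case analysis makes explicit; also, in the half-line argument what matters is $\zeta\nu^2\neq0$ (its positivity only decides which end is the finite one).
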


\begin{proof}
The leaves of $\fol$ over $\Delta_{\infty}$ are given in the affine coordinates $(x_1,x_2)$ by
\[
x_2 = -\frac{1}{2}x_1^2 - \frac{K\zeta \nu^2}{1-K\nu} x_1 + \frac{K\zeta^2 \nu^3}{2(1-K\nu)} \, ,
\]
which means that in the present coordinates they are all parabolas, with exception of a unique leaf that is the straight line given
by $\{x_1 = \zeta \nu/2\}$. Fix a generic leaf for $\fol_{\infty}$, i.e. a parabola, and take the natural parametrization of the
cone above $L_{\infty}$ by $(x_1,x_3)$. The pull-back of the Euler-Arnold vector field to the mentioned cone is given by
\[
\frac{\zeta \nu^2}{x_3} \left[ (2x_1 - \zeta \nu) \frac{\partial}{\partial x_1} + x_3 \frac{\partial}{\partial x_3} \right] \, .
\]
Fixed a leaf $L$ in the mentioned cone (and naturally away from $\Delta_{\infty}$) we have that along $L$
\[
\frac{dx_3}{dx_1} = \frac{x_3}{2x_1 - \zeta \nu} \qquad \qquad \text{and, consequently,} \qquad \qquad
x_3 = x_3^0 \sqrt{\frac{2x_1 - \zeta \nu}{2x_1^0 - \zeta \nu}} \, .
\]
There immediately follows that the leaf $L$ in the fixed cone goes to $\Delta_{\infty}$ as its projection goes to $p$. Now,
since the time-form of the vector field along $L$ is given by
\[
dT = \frac{C}{\sqrt{|2x_1 - \zeta \nu|}} \, dx_1 \, ,
\]
for some non-zero real constant $C$, we can easily conclude that $\int_{x_1}^{\zeta \nu/2} dT$ converges, while
$\int_{x_1}^{\infty}$ diverges. The leaves are then $\R^+$ or $\R^-$ complete.

It remains to study the leaves in the invariant plane $\{x_1 = \zeta \nu/2\}$ and in the plane ``above'' the line at infinity of the
present chart. The study for the remaining leaves will be done in the affine coordinates $(v_1,v_2,v_3)$, where the mentioned cones
are given, respectively, by $S_1 = \{v_1 = (\zeta \nu/2) v_3\}$ and $S_2 = \{v_3 = 0\}$. By taking the natural parametrization of
$S_1$ through $(v_1, v_2)$, the pull-back of the Euler-Arnold vector field is given by
\[
\frac{v_1}{v_2} \left[ v_1( -4\nu - \zeta \nu^2 v_1) \dd{v_1} + v_2 (-2\nu - \zeta \nu^2 v_1) \dd{v_2} \right] \ .
\]
There immediately follows that
\[
v_2(v_1) = v_2^0 \sqrt{\frac{v_1( -4 - \zeta \nu v_1)}{v_1^0( -4 - \zeta \nu v_1^0)}} \, ,
\]
which means that every leaf in $S_1$ goes to $\Delta_{\infty}$ as its projection approaches $p$ and $q$. Note that the time-form along
a leaf of $\fol$ in $S_1$ (and not contained in $\Delta_{\infty}$) is given by
\[
dT = \frac{C}{|v_1|\sqrt{| -4 - \zeta \nu v_1|}} \, dv_1
\]
for some non-zero constant $C \in \R$. The integral $\int_0^{\varepsilon} dT$ naturally diverges for $\varepsilon$ arbitrarily close to
zero while $\int_{-4/(\zeta \nu)}^{-4/(\zeta \nu) + \varepsilon} dT$ converges for $\varepsilon$ arbitrarily close to zero (note that
the point $p$ has coordinates $(-4/(\zeta \nu), 0, -8/(\zeta \nu)^2)$ in the present chart). The result follows in this case.

Finally, let us consider the invariant plane $S_2$. The restriction of the Euler-Arnold vector field to $S_2$ is, up to a multiplicative
factor, a multiple of the radial vector field. In fact, it is given by
\[
\frac{\zeta \nu^2 v_1^2}{v_2} \left[ v_1 \dd{v_1} + v_2 \dd{v_2} \right] \ .
\]
and so, its associated leaves naturally satisfy $v_2(v_1) = k v_1$ for some constant $k \in \R$. The time-form along a leaf $L$ in $S_2$
(and not contained in $\Delta_{\infty}$) then takes on the form
\[
dT = \frac{C}{v_1^2} \, dv_1
\]
for some non-zero constant $C \in \R$ and the integral $\int_0^{\varepsilon} \, dT$ clearly diverges for $\varepsilon$ arbitrarily close
to zero. To claim that the associated geodesic is complete, it suffices to notice that a leaf in $S_2$ through a point $r = (r_1,r_2,0)$
with $r_1 > 0$, goes through the “infinity” of the present coordinates and ``returns'' to the present coordinates as a leaf in $S_2$ passing
through a point $s = (s_1,s_2,0)$ with $s_1 < 0$.
\end{proof}

\bigskip

\begin{obs}
A classical result by J. Lafuente, \cite{Lafuente}, states that for a locally symmetric Lorentzian manifold, the three types of causal incompleteness (timelike, lightlike and spacelike) are equivalent.  Our approach allowed us to check, case-by-case, that the same result holds for the Lie group $\mathrm{SL}(2,\mathbb{R})$. If follows from Lebnitz's rule that idempotents are always null geodesics. It is also interesting to observe that,  for this particular space, they are the only ones. 
\end{obs}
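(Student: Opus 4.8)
The plan is to separate the remark into its substantive assertions and dispatch them in turn: (i) every idempotent of the Euler--Arnold (equivalently Lax) system is an incomplete \emph{null} geodesic; (ii) on $\SLR$ the three flavours of causal incompleteness --- time-like, light-like, space-like --- are equivalent, matching Lafuente's theorem \cite{Lafuente} even though a generic left-invariant metric on $\SLR$ is not locally symmetric; and (iii) among the homogeneous geodesics the idempotents are precisely the incomplete (and then null) ones. For (i) I would use the first integral $G(z)=B(z,\Phi^{-1}z)$ of the Lax equation $\dot z=[z,\Phi^{-1}z]$ from Corollary~\ref{cor: lax-pair}, noting that along the associated geodesic $\gamma$ one has $q(\dot\gamma,\dot\gamma)=B(\Phi x,x)=B(z,\Phi^{-1}z)=G(z)$, so $\gamma$ is null exactly when $G$ vanishes identically along it. An idempotent is a direction $w\neq 0$ with $[w,\Phi^{-1}w]=\mu w$, $\mu\neq 0$; the ad-invariance of $B$ (Equation~(\ref{eq: ad-invariant})) gives $B([w,\Phi^{-1}w],\Phi^{-1}w)=0$, whence $\mu\,G(w)=0$ and therefore $G(w)=0$. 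Equivalently --- and this is the ``Leibniz'' phrasing --- on the invariant line $z=s(t)w$ the dynamics is $\dot s=\mu s^{2}$ with $s$ non-constant, while the degree-two first integral $G$ gives $G(z)=s^{2}G(w)=\mathrm{const}$, forcing $G(w)=0$. Either way the geodesic has identically zero energy, and since $\mu\neq 0$ the solution $s(t)$ blows up in finite time, so the geodesic is null and incomplete.

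For (ii) I would run through the four normal forms of $\Phi$ fixed in Lemma~\ref{lm:B}. Recall that $q$ is complete if and only if the Euler--Arnold system has no idempotent (see the discussion after Proposition~\ref{prp:S1}, or \cite{B-M}); thus in the complete cases all three notions hold vacuously. In each incomplete case one is provided, by Theorem~\ref{Thm:B-M} and the analysis of Sections~\ref{Sec:Case1}--\ref{sec:Case2and4}, with explicit idempotents $w$ and with the corresponding singular points $p_i$ of $\fol_{\infty}$; by (i) each such $w$ already yields an incomplete light-like geodesic. To extract incomplete time-like and space-like geodesics I would examine the leaves of $\fol$ whose projection to $\Delta_{\infty}$ accumulates at $p_i$: Propositions~\ref{prop:description_complete_geodesics_case1} and~\ref{prop:incomplete_case_3} and the corresponding statements of Section~\ref{sec:Case2and4} show these leaves are incomplete, and along them the energy $G$ assumes both signs, because $G(w)=0$ while $\mathrm{d}G_{w}\neq 0$ (indeed $\mathrm{d}G_{w}(\,\cdot\,)=2B(\,\cdot\,,\Phi^{-1}w)$ and $\Phi^{-1}w\neq 0$). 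Hence every incomplete left-invariant metric admits incomplete geodesics of all three causal characters, which is exactly Lafuente's conclusion.

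The step I expect to be the main obstacle is precisely the sign analysis of $G$ near $w$ in each normal form: one must check that the invariant cone above $p_i$ genuinely crosses the null cone $\{G=0\}$ rather than being tangent to it along the idempotent line (tangency would keep $G$ of one sign), and this requires case-by-case inspection of the cones and time-forms of Sections~\ref{Sec:Case1}--\ref{sec:Case2and4}; the borderline subcases $ab=0$, $(\eta-\nu)\zeta=0$, etc., where $\Phi$ has a repeated eigenvalue, are exactly the complete metrics, so no idempotent is present and there is nothing to verify. Finally, for (iii) one should state carefully what ``they are the only ones'' means: among \emph{homogeneous} geodesics --- straight lines through the origin in the Lax variable on which the field does not vanish --- the null ones are precisely the idempotents, since such a line forces $s^{2}G(w)=\mathrm{const}$ with $s$ non-constant; this is the correct reading, as there do exist non-homogeneous incomplete null geodesics, for instance the leaves lying over the arcs of the conic $\{\nu_1 x_1^{2}+\nu_2 x_2^{2}=\nu_3\}$ joining the points $p_i$ in case~1, whose cone is contained in the null cone $\{I_2=0\}$.
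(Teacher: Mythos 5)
The paper offers no written proof of this remark: it is presented as an observation drawn from the case-by-case study of Sections \ref{Sec:Case1}--\ref{sec:Case2and4}, the only explicit hint being the one-line appeal to Leibniz's rule for the nullity of idempotents. Measured against that, your parts (i) and (ii) are correct and are essentially what the authors must have in mind. The computation $\mu G(w)=B([w,\Phi^{-1}w],\Phi^{-1}w)=0$ (equivalently, constancy of the first integral $G$ against $G(s(t)w)=s(t)^{2}G(w)$ with $\dot s=\mu s^{2}$ non-constant) is exactly the ``Leibniz'' argument, and the blow-up of $\dot s=\mu s^{2}$ in finite time gives incompleteness. For the Lafuente-type equivalence your scheme --- complete metrics are vacuous; in each incomplete normal form take an idempotent $w$, note $G(w)=0$ and $dG_{w}=2B(\cdot,\Phi^{-1}w)\neq 0$, so leaves arbitrarily close to the idempotent line carry energies of both signs, and these leaves are incomplete by Propositions \ref{prop:description_complete_geodesics_case1}, \ref{prop:incomplete_case_3} and the two propositions of Section \ref{sec:Case2and4} --- is sound, and in fact more uniform than a literal case-by-case verification. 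Two small wording points: the energy is constant along each leaf, so it takes both signs \emph{among} the nearby leaves, not along them; and the tangency worry is moot, since you work in a full neighbourhood of the idempotent line rather than on a single invariant cone (in the incomplete cases the complete leaves --- the lines over the $q$-points, respectively a single invariant plane --- stay away from such a neighbourhood).

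The weak point is your part (iii). As you define them, ``straight lines through the origin in the Lax variable on which the field does not vanish'' are, by homogeneity of $E$, precisely the idempotent lines; so your proposed reading of ``they are the only ones'' collapses to assertion (i) and carries no content. If instead one admits all invariant lines --- including those inside the singular set of $E$, which correspond to complete geodesics that are one-parameter subgroups --- the statement becomes false in cases 3 and 4: there the $z_2$-axis consists of fixed points of the Lax system, and since $B(v_2,v_2)=0$ these are $q$-null, i.e.\ complete null homogeneous geodesics that are not idempotents. On the other hand, your observation that the literal reading fails is correct and worth keeping: in case 1 with $ab>0$ the leaves of $\fol$ on the cone $\{I_2=0\}$ lying over the arcs of the conic joining the points $p_i$ are null, are not idempotents, and are even incomplete by Proposition \ref{prop:description_complete_geodesics_case1}, so neither ``the only null geodesics'' nor ``the only incomplete null geodesics'' can be meant literally. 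Hence the final sentence of the remark needs a sharper formulation than either the paper (which gives none) or your proposal provides; as it stands, your part (iii) does not establish a true statement that goes beyond (i).
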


\bigskip


\section{The Lie group $\mathrm{SL}(2,\mathbb{C})$}\label{sec:SLC}




A full classification and characterization of pseudo-Riemannian metrics on $\mathrm{SL}(2,\mathbb{R})$, by explicitly writing the Euler-Arnold vector field, was attainable since there are only four normal forms for the matrix of $\Phi$ and certain orthogonality conditions can also be derived (see Lemma \ref{lm:A}). Moreover,    the possibilities for the basis in which the matrix of $\Phi$ is written are very controlled (as proved in Lemma \ref{lm:B}). This is essentially due to the fact that the orthogonal group of the Killing form modulo the automorphism group of $\mathfrak{sl}(2,\mathbb{R})$ is isomorphic to $\mathbb{Z}_2$, a finite group with only two elements.

Considering the space $F$ of all $B$-orthogonal basis of $\mathfrak{sl}(2,\mathbb{C})$, the orthogonal group of $B$, $O_B =O(3,3)$ acts transitively and freely on $F$ turning it into a principal homogeneous space.  Any Lie algebra automorphism is preserved by the Killing form $B$ and $\mathrm{Aut}(\mathfrak{sl}(2,\mathbb{C}))$ is a closed subgroup of $O_B$.  Moreover, two orthonormal basis will have the same bracket relations if and only if they are in the same orbit under the action of $\mathrm{Aut}(\mathfrak{sl}(2,\mathbb{C}))$ on $O(3,3)$.  From \cite[Chapter 7, Section 3]{Bourbaki},   $\mathfrak{sl}(2,\mathbb{C})$ is the image of $\mathrm{SL}(2,\mathbb{C})$ under the adjoint representation $\mathrm{Ad}: \mathrm{SL}(2,\mathbb{C}) \longrightarrow \mathrm{GL}(\mathfrak{sl}(2,\mathbb{C}))$. Thus the orbit space $O(3,3)/\mathrm{Aut}(\mathfrak{sl}(2,\mathbb{C})) \simeq O(3,3)/\mathrm{Ad}(\mathrm{SL}(2,\mathbb{C}))$ is a homogeneous space of dimension 15 - 6 = 9, which means that we have an infinity of ways of writing the Lax-pair equations.

These considerations mean, therefore, that a similar direct approach to the study of completeness of pseudo-Riemannian metric on $\mathrm{SL}(2,\mathbb{C})$ is perhaps an unfeasible project.

\bigskip



We now turn our attention to the case of holomorphic metrics on $\mathrm{SL}(2,\mathbb{C})$.  Considering the complex Lie algebra
$\mathfrak{sl}(2,\mathbb{C})$, we remark that the
Killing form $B$ is now a non-degenerate complex bilinear form.

Clearly the Killing form corresponds to a complete holomorphic metric, since the Lax-pair system is simply given by $\dot{z} = 0$. However,
complete metrics in the holomorphic case are ``very rare''. To precise the claim, recall that in the case of (real) orthogonal Lie groups we were able to find an open set of $\R^n$,  $n=\mathrm{dim}\, G$, with respect to which every left-invariant pseudo-Riemannian metric associated with an
isomorphism $\Phi$ having eigenvalues in this open set is complete, cf. Theorem \ref{teo:complete-PR-metrics}.  Nevertheless, the same cannot be said in the holomorphic case. In fact, we have a complete characterization of completeness holomorphic metrics on $\mathrm{SL}(2,\mathbb{C})$, as follows.

\begin{teo}\label{Thm:incomplete-holomorphic-metrics}
Let $q$ be the holomorphic metric on $\mathfrak{sl}(2,\mathbb{C})$ defined by $q(X,Y) = B(\Phi X, Y)$, where $B$ is the Killing form
and $\Phi$ a $B$-self-adjoint isomorphism. Then, $q$ is a complete holomorphic metric if and only if $\Phi$ has an eigenvalue whose eigenspace has dimension at least 2.
\end{teo}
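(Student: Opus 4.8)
The plan is to classify, up to linear conjugacy, the $B$-self-adjoint isomorphisms $\Phi$ of $\mathfrak{sl}(2,\mathbb{C})$, to write down the associated Euler--Arnold (Lax-pair) vector field in each normal form, and then to treat the two implications separately: when $\Phi$ has an eigenvalue of geometric multiplicity at least $2$ the Lax-pair system degenerates into a linear or polynomial one, which is complete; and otherwise the homogeneous quadratic Euler--Arnold field carries an idempotent, which gives rise to an incomplete geodesic. To begin with I would establish the complex counterparts of Lemmas~\ref{lm:A} and~\ref{lm:B}. Over $\mathbb{C}$ the orthogonal group of $B$ is $\mathrm{O}(3,\mathbb{C})$, whereas $\mathrm{Aut}(\mathfrak{sl}(2,\mathbb{C})) = \mathrm{Ad}(\mathrm{SL}(2,\mathbb{C})) = \mathrm{SO}(3,\mathbb{C})$, so the quotient $\mathrm{O}(3,\mathbb{C})/\mathrm{Aut}(\mathfrak{sl}(2,\mathbb{C}))$ is again the two-element group. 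Consequently a $B$-orthonormal basis satisfies the bracket relations $[v_1,v_2] = \delta v_3$, $[v_1,v_3] = \delta v_2$, $[v_2,v_3] = -\delta v_1$, and a $B$-pseudo-orthonormal basis satisfies~\eqref{eq:Lrel2sl2R}, for some $\delta \in \{-1,1\}$, exactly as in $\mathfrak{sl}(2,\mathbb{R})$; the sign $\delta = -1$ is reduced to $\delta = 1$ by the substitution $t \mapsto -t$, which preserves completeness. The normal form of a self-adjoint operator on a $3$-dimensional complex space endowed with a non-degenerate symmetric bilinear form is then a matter of linear algebra over an algebraically closed field and yields precisely three cases: $\Phi$ diagonalizable with eigenvalues $\lambda_1, \lambda_2, \lambda_3$ (not necessarily distinct); a single $2 \times 2$ Jordan block with eigenvalue $\lambda$ together with a $1 \times 1$ block with eigenvalue $\mu$ (not necessarily distinct); and a single $3 \times 3$ Jordan block. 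The adapted basis may be chosen $B$-orthonormal in the first case and $B$-pseudo-orthonormal in the other two, so that $\Phi^{-1}$ and the Euler--Arnold vector field take, respectively, the same polynomial expressions~\eqref{eq:EAVF1}, \eqref{sist_EA3} and~\eqref{eq:EAVF4} already obtained for $\mathrm{SL}(2,\mathbb{R})$ in cases $1$, $3$ and $4$ — the real case~$2$ no longer occurring separately, since a pair of complex conjugate eigenvalues is now simply a pair of distinct eigenvalues.

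For the ``if'' direction, suppose $\Phi$ has an eigenvalue $\lambda$ with $\dim \ker(\Phi - \lambda\,\mathrm{id}) \ge 2$. Then either $\Phi$ is diagonalizable with two coinciding eigenvalues, in which case, after reindexing, one of the coefficients $a, b, c$ in~\eqref{eq:EAVF1} vanishes, so that one coordinate of the Lax-pair solution is constant and the remaining two obey a linear system with constant coefficients; or $\Phi$ is in the $2+1$ case with $\mu = \lambda$, i.e. $\eta = \nu$, so that~\eqref{sist_EA3} collapses to $\dot z_3 = 0$, $\dot z_1 = \zeta \nu^2 z_3^2$, $\dot z_2 = -\zeta \nu^2 z_1 z_3$, a system whose solutions are polynomial in $t$ (the trivial case $\Phi = \lambda\,\mathrm{id}$ giving $\dot z = 0$). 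In all of these cases every solution of the Lax-pair equation extends to an entire $\mathfrak{sl}(2,\mathbb{C})$-valued function on $\mathbb{C}$; reconstructing the geodesic through the linear, $t$-dependent equation $\dot \gamma = \gamma\,\Phi^{-1}z(t)$ on $\mathrm{GL}(2,\mathbb{C})$ — which has an entire solution and satisfies $\det \gamma \equiv 1$ because $\Phi^{-1}z(t)$ is trace-free — one obtains a geodesic defined for all $t \in \mathbb{C}$. By left-invariance the same holds for the geodesic through any point, so $q$ is a complete holomorphic metric (cf. Theorem~\ref{Thm: EA-equation}).

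For the ``only if'' direction, suppose no eigenvalue of $\Phi$ has an eigenspace of dimension $\ge 2$: then $\Phi$ is diagonalizable with pairwise distinct eigenvalues, or in the $2+1$ case with $\mu \ne \lambda$, or a single $3 \times 3$ Jordan block. As in Propositions~\ref{prp:N1} and~\ref{prp:N3} and Theorem~\ref{thm:slR(S4)}, I would extend the homogeneous quadratic Euler--Arnold field to $\mathbb{C}\mathbb{P}(3)$, use that the plane at infinity $\Delta_\infty$ is invariant, and pass to the affine chart in which $\Delta_\infty = \{x_3 = 0\}$ — the field being then given by~\eqref{eq:X_VF}, \eqref{eq_X_case3}, or its case~$4$ analogue. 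In each case the induced foliation has a singular point $p_\infty \in \Delta_\infty$ whose ``vertical'' line is a regular leaf on which the field equals a nonzero constant multiple of $\dd{x_3}$: namely $p_\infty = (\sqrt{a/c}, \sqrt{b/c}, 0)$ in the diagonalizable case, where the square roots exist because $\mathbb{C}$ is algebraically closed and the relevant constant $-c\sqrt{a/c}\sqrt{b/c}$ is nonzero precisely because $a, b, c$ are all nonzero, i.e. the eigenvalues are pairwise distinct; $p_\infty = (\sqrt{b/a}, -b/2a, 0)$ with $a = \eta - \nu \ne 0$ and $b = \zeta \nu^2 \ne 0$ in the $2+1$ case; and $p_\infty = (\zeta \nu/2, -\zeta^2 \nu^2/8, 0)$ with constant $\zeta \nu^2 \ne 0$ in the $3 \times 3$ case. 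Equivalently, back in $\mathbb{C}^3$ the line $\mathbb{C}p_\infty$ is an idempotent, the restriction of the Euler--Arnold field to it being $\dot s = c s^2$ with $c \ne 0$, whose maximal solution $s(t) = s_0/(1 - c s_0 t)$ is not defined on all of $\mathbb{C}$; this line is a leaf of the foliation, and the corresponding geodesic escapes every compact set of $\mathrm{SL}(2,\mathbb{C})$ in finite time, hence is incomplete in the sense of Definition~\ref{def: geod-semicomplete}. Therefore $q$ is not complete, and combining the two implications proves the theorem. The routine parts here are the two degenerations and the explicit idempotent computations; the steps needing care — and where I expect the main obstacle to lie — are the complex normal-form classification (the analogues of Lemmas~\ref{lm:A}--\ref{lm:B}, in particular the verification that no further normal forms arise over $\mathbb{C}$) together with the checks that, in the holomorphic setting, completeness of the Euler--Arnold vector field is equivalent to completeness of the geodesic flow and that the idempotent genuinely produces a geodesic violating condition~(ii) of Definition~\ref{def: geod-semicomplete}.
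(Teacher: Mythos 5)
Your proposal is correct and follows essentially the same route as the paper: complexify Lemmas~\ref{lm:A}--\ref{lm:B} (with $\mathrm{Aut}(\mathfrak{sl}(2,\mathbb{C}))=\mathrm{SO}(3,\mathbb{C})$ and the $\mathbb{Z}_2$ quotient), reduce to the three complex normal forms so that the Euler--Arnold fields are the complexifications of~(\ref{eq:EAVF1}), (\ref{sist_EA3}) and~(\ref{eq:EAVF4}), observe that an eigenspace of dimension at least $2$ makes the Lax system degenerate into a complete linear/polynomial one, and otherwise exhibit the invariant line over a singular point at infinity (an idempotent) yielding an incomplete geodesic. Your explicit reconstruction step $\dot\gamma=\gamma\,\Phi^{-1}z(t)$ in the completeness direction is only a minor elaboration of what the paper leaves implicit via Theorem~\ref{Thm: EA-equation}.
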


\begin{proof}
The proof of this theorem is divided in $3$ cases, namely the cases where
\begin{itemize}
\item[(a)] $\Phi$ is diagonalizable;
\item[(b)] $\Phi$ has an eigenvalue $\lambda$ such that $m_a(\lambda) - m_g(\lambda) = 1$;
\item[(c)] $\Phi$ has an eigenvalue $\lambda$ such that $m_a(\lambda) - m_g(\lambda) = 2$.
\end{itemize}

As is well-known from Sylvester's rigidity theorem, there is only one non-degenerate complex bilinear form in each dimension, up to
isomorphism. However, since this will be convenient in what follows, we will retain the definitions of orthonormal and
pseudo-orthonormal basis from Definition \ref{def: B-ON_B-PON}.

The proof of Lemma~\ref{lm:B} applies equally well to the present case. In fact, where in the original argument we have used the positive
sign of $B(v_1,v_1)$ (and of similar expressions), when the ground field is $\C$ all that is required is to have these terms different from
zero. Close reading through the proof shows that the arguments used in Lemma~\ref{lm:B} still work and the proof becomes in fact simpler.

As for the proof of Lemma~\ref{lm:A}, the arguments adapt as well and a similar result holds but we need to be more precise. The already
mentioned Sylvester's theorem implies that the group of isometries of the Killing form corresponds to $\mathrm{O}(3,\mathbb{C})$. As in
the real case, any automorphism $\varphi$ of the Lie algebra preserves the Killing form and $\mathrm{Aut}(\mathfrak{sl}(2,\mathbb{C}))$
is a closed subgroup of $\mathrm{O}(3,\mathbb{C})$. Writing the equation $[\varphi x, \varphi y] = \varphi [x,y]$   and considering $M$
the matrix of $\varphi$ with respect to the standard basis $(e_1,e_2,e_3)$, we get, by direct computation that, $c(M) = M$, where $c(M)$
is the cofactor matrix of $M$. Similiar arguments to those of Lemma ~\ref{lm:A}, show that  $\mathrm{Aut}(\mathfrak{sl}(2,\mathbb{C}))$
is then $\mathrm{SO}(3,\mathbb{C})$. Therefore, the bracket relations of bases satisfying certain orthogonality conditions are therefore
parametrized by $\mathrm{O}(3,\mathbb{C})/\mathrm{SO}(3,\mathbb{C}) \simeq \mathbb{Z}_2$. From the discussion above, we can conclude that the Lax-pair differential system for a holomorphic metric on $\mathrm{sl}(2,\mathbb{C})$ is nothing other than the
complexification of the corresponding system in cases $(a), (b)$ and $(c)$.

Consider first the case where $\Phi$ is diagonalizable and assume that $\Phi^{-1} = {\rm diag} \, (\nu_1, \nu_2, \nu_3)$. As seen
in Section~\ref{Sec:Case1}, the Euler-Arnold vector field is given by~(\ref{eq:EAVF1})
\[
E = a z_{2}z_{3} \dd{z_{1}} + b z_{1}z_{3} \dd{z_{2}} + c z_{1}z_{2} \dd{z_{3}} \, ,
\]
where now $z_1, z_2, z_3 \in \C$. Furthermore, we still have $a = \nu_2 - \nu_3$, $b = \nu_3 - \nu_1$ and $c = \nu_2 - \nu_1 = a+b$.
Assume that $\Phi$ has an eigenvalue whose eigenspace has dimension at least 2, i.e. there exist $i \ne j$ such that $\nu_i = \nu_j$.
This is equivalent to saying that $abc = 0$. The proof goes as in the first part of Proposition~\ref{prp:S1}. To be more precise, assuming,
without loss of generality that $a=0$, the first equation of the differential system~(\ref{sist_EA}) reduces then to $\dot{z}_1 = 0$.
Thus $z_1(t) = k$, with $k \in \CC$, for all $t \in \CC$ and the Euler-Arnold differential system reduces to a linear system in the
variables $z_2, \, z_3$, which is clearly complete.

Assume now that $\Phi$ has no eigenvalues whose eigenspace has dimension at least 2, which is equivalent to saying that $abc \ne 0$. Recall
that, being $E$ a polynomial vector field on $\C^3$, it admits a meromorphic extension to $\C\p(3)$. Again, if we consider the affine
coordinates $(x_1,x_2,x_3)$ related to $(z_1,z_2,z_3)$ through the map $\Psi$ on $\C^3 \setminus \{x_3 = 0\}$ taking on the form~(\ref{eq:x-chart}),
the Euler-Arnold vector field in the new coordinates is given by~(\ref{eq:X_VF}). Since $abc \ne 0$, the intersection of the singular
set of $\fol$, the foliation induced by $X$, with the plane at infinity $\Delta_{\infty}$ has exactly $5$~points, namely
\[
(0,0,0), \, \, \, \left(\sqrt{a/c},\sqrt{b/c},0\right), \, \, \, \left(\sqrt{a/c},-\sqrt{b/c},0\right), \, \, \,
\left(\sqrt{a/c},-\sqrt{b/c},0\right) \, \, \, \left(-\sqrt{a/c},-\sqrt{b/c},0\right) \, .
\]
The straight line $\{x_1 = \sqrt{a/c}, \, x_2 = \sqrt{b/c}\}$ is invariant by $\fol$ and the restriction of $X$ to it is a (non-zero)
constant vector field. The associated geodesic is then incomplete.

Consider now the case where $\Phi$ has an eigenvalue $\lambda$ such that $m_a(\lambda) - m_g(\lambda) = 1$. Consider a $B$-pseudo-orthonormal
basis where $\Phi^{-1}$ takes on the form~(\ref{eq:Phi-1-3}) and the Euler-Arnold vector field is written as~(\ref{sist_EA3})
\[
E = b z_3^2 \dd{z_{1}} - z_1 \left(a z_2 + b z_3\right) \dd{z_{2}} + a z_1 z_3 \dd{z_{3}} \, ,
\]
where $a=\eta-\nu$ and $b= \zeta \nu^2$. If $\Phi$ has an eigenvalue whose eigenspace has dimension at least 2, then $a = 0$. The third equation of the differential
system~(\ref{sist_EA3}) reduces then to $\dot{z}_3 = 0$, which means that $z_3(t) = k$, for some $k \in \CC$ and for all $t \in \CC$.
Hence, the Euler-Arnold differential vector field reduces to $bk^2 \dd{z_{1}} - bk z_1 \dd{z_{2}}$, which is clearly complete since
the solution of the associated differential system is polynomial with respect to $t$. Assume then that $\Phi$ has no eigenvalue whose
eigenspace has dimension at least 2. By following the calculations in Proposition~\ref{prp:N3}, it becomes clear that the singular
set of the foliation induced by the Euler-Arnold vector field reduces to two points in the chart associated with the coordinates
$(x_1,x_2,x_3)$. The points in question are given by $p_1 = \left( \sqrt{b/a}, -b/(2a),0 \right)$ and $p_2 = \left( -\sqrt{b/a},
-b/(2a),0 \right)$. The straight lines ``above'' each one of these singular points are regular leaves for $\fol$ since the restriction
of $X$ to them is a (non-zero) constant vector field. The associated geodesics are then incomplete.

Finally, consider the case (c). Note that in the present case there is no eigenvalue whose eigenspace has dimension at least 2.
Theorem~\ref{thm:slR(S4)} applies equally well to the present case, thus the corresponding metrics are incomplete.
\end{proof}

\medskip

It is a well-known fact that any complex semisimple Lie algebra is built out of copies of $\mathfrak{sl}(2,\mathbb{C})$ in a certain way (by means of root systems). This allows us to prove the following corollary, whose proof is completely analogous to that of the real case (cf. Corollary \ref{cor:incomplete-pr-metrics}).

\begin{coro}
Let $G$ be a complex semisimple Lie group. Then $G$ can be equipped with incomplete holomorphic metrics.
\end{coro}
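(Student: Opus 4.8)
The plan is to mimic the proof of Corollary~\ref{cor:incomplete-pr-metrics}, transferring the incompleteness phenomenon from a suitable $\mathfrak{sl}(2,\mathbb{C})$-subalgebra to all of $\mathfrak{g}$. First I would fix a root $\alpha$ of $\mathfrak{g}$ (with respect to a Cartan subalgebra) and let $\mathfrak{h} = \Span_{\mathbb{C}}(e_\alpha, f_\alpha, h_\alpha)$ be the associated $\mathfrak{sl}(2,\mathbb{C})$-triple, a Lie subalgebra of $\mathfrak{g}$ isomorphic to $\slC$. The standard root-space identities $B(h_\alpha, h_\alpha) \neq 0$, $B(e_\alpha, f_\alpha) \neq 0$, and $B(e_\alpha, e_\alpha) = B(f_\alpha, f_\alpha) = B(h_\alpha, e_\alpha) = B(h_\alpha, f_\alpha) = 0$ show that the restriction $B|_{\mathfrak{h}}$ of the Killing form of $\mathfrak{g}$ to $\mathfrak{h}$ is non-degenerate, so we obtain a $B$-orthogonal decomposition $\mathfrak{g} = \mathfrak{h} \oplus V$ with $V = \mathfrak{h}^{\perp}$ (see \cite[Chapter 7, Section 3]{Bourbaki}).

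Next I would build the metric. Pick a $B|_{\mathfrak{h}}$-self-adjoint isomorphism $\Phi_0 \colon \mathfrak{h} \longrightarrow \mathfrak{h}$ whose associated left-invariant holomorphic metric on $\mathfrak{h}$ is incomplete, for instance one in case (c) of Theorem~\ref{Thm:incomplete-holomorphic-metrics}, or a diagonalizable one with three distinct eigenvalues (so that $abc \neq 0$ in the notation of that proof). Extend it to $\Phi = \Phi_0 \oplus \mathrm{id}_V$ on $\mathfrak{g} = \mathfrak{h} \oplus V$; since the decomposition is $B$-orthogonal and $\Phi_0$ is $B|_{\mathfrak{h}}$-self-adjoint, $\Phi$ is $B$-self-adjoint on $\mathfrak{g}$ and hence defines a left-invariant holomorphic metric $q$ on $G$ via $q(x,y) = B(\Phi x, y)$.

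The key step is then to observe that $\mathfrak{h}$ is invariant under the Lax-pair (equivalently, Euler-Arnold) vector field of $q$. Indeed, by Corollary~\ref{cor: lax-pair} the geodesics of $q$ correspond to integral curves of $\dot{z} = [z, \Phi^{-1} z]$; since $\Phi$ preserves $\mathfrak{h}$ so does $\Phi^{-1}$, and since $\mathfrak{h}$ is a subalgebra the bracket $[z, \Phi^{-1} z]$ lies in $\mathfrak{h}$ whenever $z \in \mathfrak{h}$. Hence the restriction of this vector field to $\mathfrak{h}$ is precisely the Lax-pair vector field of the metric on $\mathfrak{h}$ determined by $\Phi_0$, which by construction admits an incomplete integral curve. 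Such a curve is a fortiori an incomplete integral curve of the vector field on $\mathfrak{g}$, so the geodesic flow of $q$ on $G$ is incomplete.

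I do not expect any real obstacle here: the only points requiring care are the non-degeneracy of $B|_{\mathfrak{h}}$ (classical root theory) and the remark that the Lax-pair system of the restricted problem genuinely coincides with the restriction of the ambient Lax-pair system, which is immediate once $\Phi\mathfrak{h} \subseteq \mathfrak{h}$ and $[\mathfrak{h}, \mathfrak{h}] \subseteq \mathfrak{h}$. One should also note, as in Corollary~\ref{cor:incomplete-pr-metrics}, that the Euler-Arnold correspondence is a statement about integral curves on the Lie algebra, so incompleteness at the level of the vector field on $\mathfrak{g}$ translates directly into an incomplete geodesic of $(G,q)$.
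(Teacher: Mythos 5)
Your proof is correct and follows essentially the same route as the paper, which simply declares the argument ``completely analogous'' to Corollary~\ref{cor:incomplete-pr-metrics}: embed a copy of $\mathfrak{sl}(2,\mathbb{C})$, choose $\Phi$ preserving it and restricting to an incomplete case of Theorem~\ref{Thm:incomplete-holomorphic-metrics}, and transfer the incomplete integral curve of the Euler--Arnold system from $\mathfrak{h}$ to $\mathfrak{g}$. Your explicit details (the $B$-orthogonal splitting $\mathfrak{g}=\mathfrak{h}\oplus V$ via root theory and the extension $\Phi=\Phi_0\oplus\mathrm{id}_V$) merely fill in what the paper leaves implicit.
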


To finish this paper, let us consider again the previous family of holomorphic metrics. Albeit their geodesic flow not being complete in general,
it turns out to always being semicomplete as it will be seen below (Theorem~\ref{Thm:semicomplete}). In fact, as mentioned in the Introduction,
their Euler-Arnold vector fields give rise to a $2$-parameter family of iso-spectral quadratic vector fields on $\C^3$ which, in addition,
are semicomplete and naturally associated with certain elliptic surfaces birational to the complex projective plane, cf. \cite{Guillot}.

\begin{teo}\label{Thm:semicomplete}
Let $q$ be the holomorphic metric on $\mathfrak{sl}(2,\mathbb{C})$ defined by $q(X,Y) = B(\Phi X, Y)$, where $B$ is the Killing form
and $\Phi$ is a $B$-self-adjoint isomorphism. Then the associated Euler-Arnold vector field is semicomplete on $\C^3$.
\end{teo}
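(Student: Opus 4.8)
The plan is to exploit the dichotomy already established in the proof of Theorem~\ref{Thm:incomplete-holomorphic-metrics}. Recall that the Euler-Arnold vector field $E$ associated with $q$ is, in the appropriate $B$-orthonormal or $B$-pseudo-orthonormal coordinates, the complexification of one of three explicit quadratic vector fields on $\C^3$ (cases (a), (b), (c) of that proof). When $\Phi$ has an eigenvalue with a $2$-dimensional eigenspace, the proof of Theorem~\ref{Thm:incomplete-holomorphic-metrics} shows that $E$ is in fact \emph{complete}, hence a fortiori semicomplete. Thus it suffices to prove semicompleteness when $abc \ne 0$ in case (a), when $a \ne 0$ in case (b), and in case (c) always.

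The main tool is the first-integral structure coming from Corollary~\ref{cor: lax-pair}: the Lax-pair vector field $\dot z = [z, \Phi^{-1}z]$ carries two functionally independent holomorphic first integrals $F(z) = B(z,z)$ and $G(z) = B(z,\Phi^{-1}z)$, and $E$ is conjugate to it. On the generic level set $\{F = c_1, \, G = c_2\}$ the vector field $E$ is tangent to a smooth affine algebraic curve $C_{c_1,c_2} \subset \C^3$, and the time-form $dT$ restricts to a meromorphic $1$-form on the smooth projective completion $\overline{C}_{c_1,c_2}$. Semicompleteness on $\C^3$ then reduces, away from the (codimension $\geq 1$) set of special level curves, to verifying that this $1$-form has no pole of order $\geq 2$ at the points of $\overline{C}_{c_1,c_2}\setminus C_{c_1,c_2}$; this is exactly the criterion of \cite{Reb, Guillot}: the vector field is semicomplete on a generic fiber precisely when the solutions are given by the inverse of a local primitive of $dT$, which is the case when $dT$ has at worst simple poles at the ends of the curve. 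First I would compute $\overline{C}_{c_1,c_2}$ in each of the three cases: it is a plane cubic (intersection of two quadrics in $\C^3$), generically smooth of genus $1$, so an elliptic curve, and the foliation at infinity (computed explicitly in Sections \ref{Sec:Case1}, \ref{Sec:Case3}, \ref{sec:Case2and4}) tells us that at each point of $\overline{C}\setminus C$ the vector field $X$ in the chart near $\Delta_\infty$ is, up to a unit, of the form $x_3^2\,\partial/\partial x_3$ or a regular constant field. In the first instance $dT$ has a double pole along the fiber, but one checks that those fibers form a proper analytic subset (the idempotent cones); in the latter, $dT$ extends holomorphically or with a simple pole, which is what we need. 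So the computation splits into: (i) identify $\overline{C}$ and its points at infinity in cases (a), (b), (c); (ii) read off, from the explicit expressions for $X$ near $\Delta_\infty$ already written down in Sections~\ref{Sec:Case1}--\ref{sec:Case2and4}, the order of vanishing of $X$, equivalently the order of the pole of $dT$, at each such point; (iii) conclude that on each level curve $dT$ has at worst simple poles, hence the solution is a univalued meromorphic function of $t$ on its maximal domain, i.e. $E$ is semicomplete.

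The remaining work is to handle the \emph{non-generic} level curves — those on which $C_{c_1,c_2}$ is singular (nodal or cuspidal cubic, or reducible into lines) or on which the above genus count fails. Here one argues directly: on a reducible fiber the components are invariant lines (the idempotent directions) or conics, and on each such component $E$ restricts to a Riccati or linear equation whose solutions are explicitly rational or exponential in $t$, hence semicomplete; the gluing of these along the singular points of the fiber is compatible with semicompleteness because the vector field is regular there after the blow-up described in \cite{Guillot}. I expect the main obstacle to be precisely this analysis of the special fibers and of the behaviour of the foliation along the exceptional divisor of the relevant birational model of $\C\PP(2)$: one must ensure that no finite-time blow-up with multivalued continuation occurs at the finitely many ``bad'' points of the elliptic fibration. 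This is where the connection to ``quadratic elliptic foliations on $\C\PP(2)$'' in the sense of \cite{gautier} does the heavy lifting, since such foliations are known to be semicomplete, and the point is to verify that our $E$, after the change of coordinates conjugating it to the Lax-pair form and then compactifying, falls into that class in each of the three cases.
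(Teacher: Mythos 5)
Your overall architecture is the same as the paper's: dispose of the degenerate cases by completeness, then use the two first integrals to confine each leaf to an algebraic curve, compactify, and control the behaviour at the points at infinity. The genuine problem is the analytic criterion you invoke at the ends. You assert that semicompleteness on a fiber holds ``when $dT$ has at worst simple poles at the ends of the curve''. This is not the right criterion, and it is contradicted by what actually happens: in case (c) (and already at the degenerate singular point over the $z_2$-axis in case (b)), the restriction of $E$ to a \emph{generic} leaf extends across the puncture with a zero of order $2$ (resp.\ order $1$), so $dT$ has a \emph{double} pole at that end --- yet the field is semicomplete there. Conversely, the obstruction you actually need to exclude at an end is a pole of the restricted vector field, i.e.\ a zero of $dT$ (e.g.\ $\frac{1}{z}\partial/\partial z$ on a punctured disc is not semicomplete), and your criterion says nothing about this. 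The mechanism that makes the theorem work, and which the paper uses, is different: for each leaf $L$ one takes a Puiseux parametrization of the separatrix it induces at each singular point of $\fol$ on $\Delta_{\infty}$ (eigenvalues $2,2,1$ at the generic points, the degenerate points treated by hand via the first integrals) and checks that the pull-back of $E|_L$ extends \emph{holomorphically} (regular, or with a zero of order $1$ or $2$). Then the normalization of the compactified leaf is a compact Riemann surface carrying a global holomorphic vector field, hence a complete one, and the restriction of a complete field to the open leaf is semicomplete; the order of the pole of $dT$ at the punctures is irrelevant because the punctures lie outside $\C^3$. Without this (or an equivalent) computation of the local form of $E|_L$ at the singular points at infinity, your step (iii) does not follow, and with your stated criterion you would be forced to the wrong conclusion in case (c).

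Two further points. First, semicompleteness is a condition on \emph{every} orbit, so ``away from the (codimension $\geq 1$) set of special level curves'' is not a reduction you can make; you do return to the special fibers at the end, but the argument there is deferred to the claim that quadratic vector fields defining elliptic fibrations ``are known to be semicomplete'', which is not a theorem that can be cited in this form --- Guillot's and Gautier's results concern specific normal forms, and verifying that the Euler--Arnold fields fall into a semicomplete class is essentially the content of what has to be proved, not something that can do the heavy lifting for you. Second, a minor slip: the generic fiber, being the intersection of two quadrics, compactifies to a quartic space curve of genus one, not a plane cubic; this does not affect the argument.
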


The argument used in the proof is adapted from an unpublished manuscript by J. Rebelo~\cite{Reb-manuscript}. We thank J. Rebelo for making his notes
available to us.

\begin{proof}
Again, the proof of this theorem is divided in the three cases (a), (b) and (c) listed in the proof of Theorem~\ref{Thm:incomplete-holomorphic-metrics}.
Let us begin with the diagonalizable case. As previously mentioned, the Euler-Arnold vector field is given by Formula~(\ref{eq:EAVF1}).
As already seen, if $abc = 0$, then the Euler-Arnold vector field is complete on $\C^3$ so that there is nothing to prove.
So, let us assume from now on that none of $a, \, b$ and $c$ is zero.

Consider then the meromorphic extension of $E$ to $\C \p(3)$ and let $\fol$ stand for the associated foliation. As previously seen, $\fol$
possesses seven singular points in $\Delta_{\infty}$. Three of these singular points are given by the intersection of
$\Delta_{\infty}$ with each of the coordinate axis of the initial coordinates $(z_1,z_2,z_3)$. These axes are, in fact, fully constituted
by singular points of $\fol$. The three projective lines determined by the mentioned coordinate axes along with the remaining 4 singular
points (denoted by $p_1, \, p_2, \, p_3$ and $p_4$ in Section~\ref{Sec:Case1}) in $\Delta_{\infty}$ make up for the singular set of $\fol$.
The three points of $\Delta_{\infty}$ determined by the coordinate axes of $(z_1,z_2,z_3)$ are denoted by $q_x, \, q_y$ and $q_u$.

Fix a leaf $L$ of $\fol$ not contained in $\Delta_{\infty}$. Recall that $L$ is a Riemann surface contained in an algebraic curve of
$\C \p(3)$. By following the calculations presented in Section~\ref{Sec:Case1}, we know that whenever the projection of $L$ on $\Delta_{\infty}$
approaches the singular points $q_x$ (resp. $q_y$, $q_u$), then the leaf $L$ itself accumulates at a singular point of $\fol$ lying in the coordinate
axis $z_1$ (resp. $z_2$, $z_3$) different from $q_x$ itself (resp. $q_y$, $q_u$). In particular, locally, the leaf $L$ remains away from infinity so
that the vector field restricted to the leaf $L$ is holomorphic around the singularity in question.

It remains to study the restriction of the Euler-Arnold vector field to $L$ nearby the singular points $p_i, \,  i=1..4$.
It is enough to consider the case of $p_1$ since the case for the other singular points are analogous. To begin with, recall from the calculations in
Section~\ref{Sec:Case1} that a leaf $L$ whose projection in $\Delta_{\infty}$ approaches $p_1$ defines a separatrix
of $\fol$ through $p_1$. Furthermore, since the eigenvalues of $\fol$ at $p_1$ are $2, \, 2, \, 1$, where the eigenvalue associated with
the direction transverse to the $\Delta_{\infty}$ is 1, there follows that the separatrix induced by $L$ is smooth and admits an irreducible
Puiseux parametrization $\sigma(t)$ of the form $\sigma(t) = (\sigma_1(t), \sigma_2(t), t)$, where $\sigma_1(0) = \sigma_2(0) = \sigma_1'(0)
= \sigma_2'(0) = 0$.

Consider next the restriction $E|_L$ of $E$ to $L$ and its pull-back $\sigma^{\ast} E|_L$ by $\sigma$. In particular, $\sigma^{\ast} E|_L$
is a vector field defined on a punctured neighborhood of $0$ in $\C$.

\bigbreak

\noindent {\it Claim:} The vector field $\sigma^{\ast} E|_L$ admits a holomorphic extension as a regular vector field to $0 \in \C$.

\begin{proof}[Proof of the Claim]
Let $\sigma^{\ast} E|_L = F(t) \partial /\partial t$. Owing to Riemann's Theorem, it suffices to prove that the limit of $F$ at $0$ exists
and is non-zero. Consider then local coordinates $(x,y,z)$ nearby $p_1$ where $p_1 \simeq (0,0,0)$ and $\Delta_{\infty} \simeq \{z = 0\}$.
It can be easily checked that in these coordinates, the vector field $E$ takes on the form
\[
\frac{1}{z} \left[ (2x + {\rm h.o.t.}) \frac{\partial}{\partial x} + (2y + {\rm h.o.t.}) \frac{\partial}{\partial y}
+ (z + {\rm h.o.t.}) \frac{\partial}{\partial z} \right] \, ,
\]
up to a (non-zero) multiplicative constant. Thus the pull-back of the restriction of $E$ to $L$ by $\sigma$ is given by
\[
(1 + {\rm h.o.t}) \frac{\partial}{\partial t}
\]
up to the same multiplicative constant. This means that $\sigma^{\ast} E|_L$ is regular at $0 \in \C$ as we intended to prove.
\end{proof}

The normalization $\widehat{L}$ of the compactification of $L$ in $\C\p(3)$ is a (compact) Riemann surface equipped with a globally defined holomorphic
vector field. This vector field if therefore complete on $\widehat{L}$. Since the restriction of $E$ to $L$ is identified with the restriction of the
mentioned vector field to the complement of finitely many points in $\widehat{L}$, it must be semicomplete as the restriction of a complete vector
field to an open set. This holds for an arbitrary leaf of $\fol$, we conclude that $E$ is semicomplete on $\C^3$.

Consider next the case where $\Phi$ has an eigenvalue $\lambda$ such that $m_a(\lambda) - m_g(\lambda) = 1$ and consider a $B$-pseudo-orthonormal
basis where $\Phi^{-1}$ takes on the form~(\ref{eq:Phi-1-3}), so that the Euler-Arnold vector field is written as~(\ref{sist_EA3})
\[
E = b z_3^2 \dd{z_{1}} - z_1 \left(a z_2 + b z_3\right) \dd{z_{2}} + a z_1 z_3 \dd{z_{3}} \, ,
\]
where $a=\eta-\nu$ and $b= \zeta \nu^2$. As previously seen, if $\Phi$ has an eigenvalue whose eigenspace has dimension at least 2, i.e. if $a = 0$,
then the Euler-Arnold vector field is complete on $\C^3$. Thus it is semicomplete on $\C^3$ as well. Assume then that $a \ne 0$.

Given the preceding construction, it suffices to show that whenever a leaf $L$ of $\fol$ induces a separatrix for a singularity of $\fol$
lying in $\Delta_{\infty}$, the restriction of $X$ to the resulting separatrix is holomorphic. In the present case, $\fol$ possesses four
singular points in $\Delta_{\infty}$, namely
\begin{itemize}
\item[(i)] $p_1 = \left( \sqrt{b/a}, -b/(2a), 0 \right)$ and $p_2 = \left(-\sqrt{b/a}, -b/(2a),0 \right)$ in the affine coordinates $(x_1,x_2,x_3)$.
The eigenvalues of $\fol$ at these points are $2, \, 2, \, 1$,  where $1$ is the eigenvalue associated to the direction transverse to $\Delta_{\infty}$.
These points can be treated exactly as the points with the same eigenvalues in the previous case.

\item[(ii)] The intersection of $\Delta_{\infty}$ with the axis $z_1$, denoted by $p_3$. The eigenvalues of $\fol$ at these points are $1, \, -1, \, 0$,
where $0$ is the eigenvalue associated to the direction transverse to $\Delta_{\infty}$. In particular, the foliation induced on $\Delta_{\infty}$ has
a saddle behavior. This implies that the leaves in the invariant planes defined by the two separatrices though $p_3$ are the unique leaves that can
accumulate at $p_3$. By expressing the Euler-Arnold vector field in the affine coordinates, it can easily be checked that a leaf $L$ in these invariant
planes never accumulates at $p_3$ unless it is totally contained in $\Delta_{\infty}$. Summarizing, no leaf of $\fol$ induces a separatrix
at $p_3$ so that this singular point plays no further role in the present discussion.

\item[(iii)] The intersection of $\Delta_{\infty}$ with the axis $z_2$, denoted by $p_4$. The foliation has a singular point of order~$2$ at $p_4$.
In particular all eigenvalues are zero. This case is discussed in detail below.
\end{itemize}

Consider the affine coordinates $(v_1,v_2,v_3)$ where $(v_1/v_2,1/v_2,v_3/v_2) = (z_1,z_2,z_3)$. The first integrals $I_1$ and
$I = \nu I_1 - I_2$ (cf. Section~\ref{Sec:Case3}) in these coordinates are given by
\[
I_1 = \frac{v_1^2 + 2v_3}{v_2^2} \qquad \text{and} \qquad I = \frac{-av_1^2 + bv_3^2}{v_2^2} \, .
\]
From $I_1$, we have that $v_3$ is a function of $v_1$ and $v_2$. In fact, $v_3 = (kv_2^2 - v_1^2)/2$ so that the corresponding invariant surfaces
for $\fol$ pass through the origin for every $k \in \C$ and they are tangent to the plane $\{v_3 = 0\}$ at the point in question. In turn, by substituting
$v_3 = v_3(v_1,v_2)$ we can check that, indeed, the leaves pass through the origin and that they are transverse to $\Delta_{\infty}$ (locally
given by $\{v_2 = 0\}$). The leaf $L$ admits then a Puiseux parametrization of the form $\sigma(t) = (t, \, \alpha t + {\rm h.o.t}, \, t^2 + {\rm h.o.t.})$,
for some $\alpha \in \C^{\ast}$. Noticing that the Euler-Arnold vector field in the affine coordinates $(v_1,v_2,v_3)$ is given by
\[
\frac{1}{v_2} \left[ \left( a v_1^{2} + b v_3(v_1^2 + v_3) \right) \dd{v_1} + v_1 v_2 (a + b v_3) \dd{v_2} + v_1 v_3 (2a + b v_3) \dd{v_3} \right] \, ,
\]
the pull-back of the restriction of $E$ to $L$ by $\sigma$ can be holomorphically extended to $0 \in C$ as a vector field of the form
$F(t) \partial /\partial t$, with $F(0) = 0$ and $F'(0) \ne 0$. We then conclude that $L$ may be compactified as an algebraic
Riemann surface equipped with a global holomorphic vector field, so that it is complete.

Finally, consider case (c) and the $B$-pseudo-orthonormal basis with respect to which the isomorphism $\Phi$ takes on the form presented in
Section~\ref{sec: LP_equations}. If $(z_1,z_2,z_3) \in \C^3$ stands for the coordinates of $z \in \slR$ in the mentioned $B$-pseudo-orthonormal
basis, the Euler-Arnold vector field is given in these coordinates by
\begin{equation}
E = \zeta \nu^2 \left[ z_3(-\zeta \nu z_3 + z_1) \dd{z_{1}} + (z_{2}z_{3} - z_{1}^{2} + \zeta \nu z_{1}z_{3}) \dd{z_{2}}
- z_{3}^{2} \dd{z_{3}} \right] \, .
\end{equation}
We keep the preceding notations so that $\fol$ stands for the corresponding singular holomorphic foliation in $\C\p(3)$. The singular set
of $\fol$ consists of
\begin{itemize}
\item[(i)] an isolated singular point $p \in \Delta_{\infty}$ given in the affine coordinates $(x_1,x_2,x_3)$ by $p = \left( \frac{\zeta \nu}{2},
-\frac{\zeta^2 \nu^2}{8} ,0 \right)$. The eigenvalues of $\fol$ at $p$ are $2, \, 2, \, 1$,  where $1$  is the eigenvalue associated to the direction
transverse to $\Delta_{\infty}$. This point can be treated exactly as the points with the same eigenvalues in the previous cases.

\item[(ii)] the projective line arising from the $z_2$-axis. The intersection point of this projective line with $\Delta_{\infty}$ will be denoted
by $q$ and coincides with the origin of the affine coordinates $(v_1,v_2,v_3)$. We detail below how to treat the leaves $L$ of $\fol$ yielding a
(local) separatrix for $\fol$ at $q$.
\end{itemize}

In the coordinates $(v_1,v_2,v_3)$ the foliation $\fol$ is determined by the (two independent) first integrals
\[
I_1 = \frac{v_1^2 + 2v_3}{v_2^2} \qquad \text{and} \qquad I_2 = \nu I_1 + \frac{\zeta^2 \nu^3 v_3^2 - 2\zeta \nu^2 v_1 v_3}{v_2^2} \, .
\]
In particular, if $L$ is a leaf of $\fol$ whose projection in $\Delta_{\infty}$ approaches $q$, then the actual leaf $L$ has to approach $q$.
Following the same argument used in case (b), a separatrix induced by $L$ is smooth at $q$ and admits an
irreducible Puiseux parametrization of the form $\sigma(t) = (t, \alpha t + {\rm h.o.t}, t^2 + {\rm h.o.t})$, for some $\alpha \in
\C^{\ast}$. Finally, by noticing that the Euler-Arnold vector field in the present coordinates takes on the form
\[
V = \frac{ba^2}{v_2} \left[ ( v_1^3 - ab v_1^2v_3 - ab v_3^2 ) \frac{\partial}{\partial v_1}
+ v_2 (v_1^2 - v_3 - ab v_1 v_3) \frac{\partial}{\partial v_2} + v_3 (v_1^2 - 2v_3 - ab v_1 v_3) \frac{\partial}{\partial v_3} \right] \ .
\]
there follows that $\sigma^{\ast} V|L$ admits a holomorphic extension to $0 \in \C$ of the form $F(t) \partial/\partial t$, for some $F$
such that $F(0) = F'(0) = 0$ and $F''(0) \ne 0$. Again, this shows that the restriction of the Euler-Arnold vector field to the normalization
of the compactification of $L$ is holomorphic and ends the proof of the theorem.
\end{proof}


\bigbreak

\bigbreak

\noindent {\bf Acknowledgements}.

The authors are grateful to Julio Rebelo for providing us with a copy of his unpublished manuscript~\cite{Reb-manuscript}
and would also like to thank Ilka Agricola, Miguel Sánchez and Abdelghani Zeghib for their valuable comments
on this manuscript.

The first author was financed by FCT - Funda\c{c}\~ao para a Ci\^encia e Tecnologia, I.P.  (Portugal) - through the PhD grant PD/BD/143019/2018.
The second author was partially supported by FCT through the sabbatical grant SFRH/BSAB/135549/2018 and through CMAT under the project UID/MAT/00013/2013.
The third author was partially supported by CMUP, member of LASI, which is financed by national funds through FCT under the project UIDB/00144/2020
and also by CIMI through the project ``Complex dynamics of group actions, Halphen and Painlev\'e systems''. Finally, all three authors benefited from
CNRS (France) support through the PICS project ``Dynamics of Complex ODEs and Geometry''.

\vspace*{6mm}

{\footnotesize

{\sc Ahmed Elshafei},  Centro de Matem\'atica da Universidade do Porto,  Centro de Matem\'atica da Universidade do Minho,
Portugal,  {\tt a.el-shafei@hotmail.fr}

{\sc Ana Cristina Ferreira}, Centro de Matem\'atica da Universidade do Minho,  Campus de Gualtar,  4710-057 Braga, Portugal,
{\tt anaferreira@math.uminho.pt}

{\sc Helena Reis} , Centro de Matem\'atica da Universidade do Porto, Faculdade de Economia da Universidade do Porto, Portugal, {\tt hreis@fep.up.pt}

}


\end{document}